\renewcommand{\d}{\,\mathrm{d}}
\newcommand{\dd}{\mathrm{d}}
\newcommand{\q}{q}
\newcommand{\p}{Q}
\newcommand{\E}{\mathbb{E}}
\newcommand{\R}{\mathbb{R}}
\newcommand{\N}{\mathbb{N}}
\DeclareMathOperator{\ucp}{\mathbb{P}}
\DeclareMathOperator{\DM}{DM}
\newcommand{\id}{1}
\newcommand{\PPP}{\mathcal{P}}
\newcommand{\U}{\mathrm{U}}
\theoremstyle{plain}
\newtheorem{theorem}{Theorem}[section]
\newtheorem{corollary}[theorem]{Corollary}
\newtheorem{lemma}[theorem]{Lemma}
\newtheorem{proposition}[theorem]{Proposition}
\theoremstyle{definition}
\newtheorem{example}[theorem]{Example}
\theoremstyle{remark}
\newtheorem{remark}[theorem]{Remark}
\begin{document}
  \title{Admissible ways of merging p-values under arbitrary dependence}
  \author{Vladimir Vovk\thanks%
    {Department of Computer Science,
    Royal Holloway, University of London,
    Egham, Surrey, UK.
    E-mail: \href{mailto:v.vovk@rhul.ac.uk}{v.vovk@rhul.ac.uk}.}
  \and
    Bin Wang\thanks{RCSDS, NCMIS, Academy of Mathematics and Systems Science,
    Chinese Academy of Sciences, Beijing, China.
    Email: \href{mailto:wangbin@amss.ac.cn}{wangbin@amss.ac.cn}.}
  \and
    Ruodu Wang\thanks%
    {Department of Statistics and Actuarial Science,
    University of Waterloo,
    Waterloo, Ontario, Canada.
    E-mail: \href{mailto:wang@uwaterloo.ca}{wang@uwaterloo.ca}.}
  }

  \maketitle

  \begin{abstract}
    Methods of merging several p-values into a single p-value are important in their own right and widely used in multiple hypothesis testing.
    This paper is the first to systematically study the admissibility (in Wald's sense) of p-merging functions and their domination structure,
    without any information on the dependence structure of the input p-values.
    As a technical tool we use the notion of e-values, which are alternatives to p-values recently promoted by several authors.
    We obtain several results on the representation of admissible p-merging functions via e-values and on (in)admissibility of existing p-merging functions.
    By introducing new admissible p-merging functions,
    we show that some classic merging methods can be strictly improved to enhance power without compromising validity under arbitrary dependence.
  \end{abstract}

  \textbf{Keywords:}
  p-values, duality, multiple hypothesis testing, admissibility, e-values

\section{Introduction}
 
A common task in multiple testing of a single hypothesis and testing multiple hypotheses
is to combine several p-values into one p-value
(without using the underlying data).
If one assumes independence (or another specific dependence structure) among p-values testing a scientific hypothesis $H_0$,
then the combined p-value is effectively testing a composition of $H_0$ and the independence assumption.
A rejection obtained from such a test may be due to statistical evidence against either independence or the scientific hypothesis of interest (or both).
As we typically only have one realization of a bunch of p-values, it is not possible to identify the source of rejection.
Hence, such a method cannot be justified unless convincing evidence of independence is supplied;
however, as argued by \citet[pp.~50--51]{Efron:2010}, neither independence nor positive regression dependence,
which is often assumed in literature,
is realistic in large-scale inference. 
Therefore, it is important to consider merging methods that are valid
without available information on the dependence structure.
In general, dropping the assumption of independence makes the problem of merging p-values more difficult:
see, e.g., \citet[Section 1]{Vovk/Wang:ie}.

Several valid merging methods are known for arbitrary dependence structure among p-values;
these methods do not make any other assumptions about the input p-values
(such as assumptions about their support; those p-values can be continuous or discrete),
and their validity is exact (and not, e.g., asymptotic or approximate).
Of course, such methods, which we will call \emph{universally valid}, come at a cost of power.
The most well-known one is arguably the Bonferroni correction, which uses the minimum of p-values times the number of tests.
Several other methods include those of \citet{Ruger:1978} and \citet{Hommel:1983}, based on order statistics of p-values,
and those of \citet{Vovk/Wang:2020Biometrika}, based on generalized means of p-values; see Section \ref{sec:3} for details of these merging methods.
These methods include versions of the method of \citet{Simes:1986} and the harmonic mean of \citet{Wilson:2019}
that are adjusted to be valid under arbitrary dependence.

Our study gives rise to new universally valid merging methods
(in particular, free of any dependence assumptions)
that are more powerful than the ones in the existing literature.
Perhaps the main of these methods is what we call the grid harmonic method $H^*_K$,
which improves on the method of \citet{Hommel:1983}.
Our simulation studies demonstrate that the improvement is very substantial,
which shows in applications that are important in practice,
such as multiple hypothesis testing.
See Sections \ref{sec:o} and~\ref{sec:9}.

The main objective of this paper is to study the domination structure among universally valid functions for merging p-values,
henceforth \emph{p-merging functions}. 
In particular, we do not discuss methods that are valid for specific classes of dependence structures;
for the latter, see e.g., \citet{sarkar1998some}, \cite{Wilson:2019}, and \cite{Liu/Xie:2020},
as well as \cite{Chen:2020} for a summary.
A p-merging function is \emph{admissible} if it is not strictly dominated by any other p-merging function.
Ideally, ceteris paribus, only admissible p-merging functions should be used, as other methods can be strictly improved.
It turns out that admissibility and domination structure among p-merging functions give rise to highly non-trivial mathematical challenges. 
We are mainly interested in homogeneous and symmetric p-merging functions,
as most  p-merging functions used in practice are of this kind.

Let us briefly summarize our main contributions.
First, the merging function of \citet{Simes:1986} (valid under the assumption of independence)
is the minimum of all symmetric p-merging functions (Theorem \ref{thm:Simes}).
Second, we give two representation results (Theorems \ref{th:e} and \ref{pr:e}) of admissible p-merging functions
which are naturally connected to e-values \citep{Vovk/Wang:E,Shafer:2019,Grunwald/etal:2019},
our important technical tool, via a duality argument.
Third, we provide an analytical condition for a calibrator to induce an admissible p-merging function (Theorem \ref{th:admissible}).
Fourth, we proceed to show that the classic p-merging function of \citet{Hommel:1983} and the scaled averaging functions of \citet{Vovk/Wang:2020Biometrika}
can be strictly improved to their more powerful versions (Theorems \ref{th:o1} and \ref{th:m1}),
whereas the scaled order statistics of \citet{Ruger:1978} are generally admissible after a trivial modification (Theorem \ref{pr:o1}).
Various other smaller results on properties and comparisons of  p-merging functions are obtained during our scientific journey.

Our p-merging functions can be directly applied to any procedures for multiple hypothesis testing,
such as those of \citet{Genovese/Wasserman:2004} and \citet{Goeman/Solari:2011}; see Section \ref{sec:9} for simulation studies.
In addition to the grid harmonic p-merging function $H_K^*$, strictly dominating the merging function of \citet{Hommel:1983},
we design an admissible merging function $F_{-1,K}^*$ strictly dominating the harmonic merging function of \citet{Vovk/Wang:2020Biometrika}.
The Hommel and harmonic merging functions have been shown to be special among two general families (see Section 4 of \citet{Chen:2020})
with wide applications, attractive properties, and good empirical performance (e.g., \citet{Wilson:2020}).

Several mathematical results in this paper are quite sophisticated and surprising.
In Theorem \ref{th:o1}, we find the unexpected result that $H_K^*$, while admissible for non-prime numbers $K$ of the input p-values,
is not admissible in general for prime $K$.
For a given p-merging function, it is generally difficult to prove or disprove its admissibility, or to construct a dominating admissible p-merging function.
The proofs of our results rely on recent techniques in robust risk aggregation and dependence modeling.
In particular, advanced results on joint mixability in \citet{WW11, WW16} play a crucial role in proving Theorem \ref{th:admissible},
and many other results in the paper require complicated constructions of specific dependence structure among p-variables.
Some open questions are presented in concluding Section \ref{sec:10} for the interested reader.

\begin{remark}
  A useful distinction, introduced in \citet{Good:1958},
  is between statistical tests in parallel and in series.
  In the former case the input p-values are all based on the same evidence,
  and we are mostly interested in this case.
  In testing in series the input p-values may be based on bodies of evidence
  that we may judge to be independent,
  and then the assumption of independence of p-values may be justified.
  More generally, one may consider sequentially dependent (or \emph{sequential}) p-values;
  cf. \citet[Section 2]{Vovk/Wang:ie}.
\end{remark}

\section{P-merging functions and basic properties}
\label{sec:2}

Without loss of generality we fix an atomless probability space $(\Omega,\mathcal A,Q)$
(see, e.g., \citet[Proposition~A.27]{Follmer/Schied:2011} or \citet[Appendix D]{Vovk/Wang:E}).
A \emph{p-variable} is a random variable $P:\Omega\to[0,\infty)$ satisfying
\[
  Q(P\le \epsilon) \le \epsilon \text{ for all }\epsilon \in (0,1).
\]
The set of all p-variables is denoted by $\mathcal{P}_Q$.  
Throughout, $K\ge 2$ is an integer. 
A \emph{p-merging function} of $K$ p-values is an increasing Borel function $F:[0,\infty)^K\to[0,\infty)$
such that $F(P_1,\dots,P_K)\in \mathcal{P}_Q$ whenever $P_1,\dots,P_K \in \mathcal{P}_Q$.
(Notice that the joint distribution of $P_1,\dots,P_K\in \mathcal{P}_Q$ can be arbitrary.)
A p-merging function $F$ is \emph{symmetric} if $F(\mathbf p)$ is invariant under any permutation of $\mathbf p$,
and it is \emph{homogeneous} if $F(\lambda\mathbf p)=\lambda F(\mathbf p)$ for all $\lambda \in (0,1]$ and $\mathbf p$ with $F(\mathbf p) \le 1$.
All p-merging functions that we encounter in this paper are homogeneous and symmetric.
Although we allow the domain of $F$ to be $[0,\infty)^K$ in order to simplify presentation,
the informative part of $F$ is its restriction to $[0,1]^K$.
Throughout, $\mathbf 0$ is the $K$-vector of zeros, $\mathbf 1$ is the $K$-vector of ones,
and all vector inequalities and the operation $\wedge$ of taking the minimum of two vectors are component-wise.
For  $a,b,x,y\in \R$, $ax \wedge by $ should be understood as $ (ax)\wedge (by)$.

We say that a p-merging function $F$ \emph{dominates} a p-merging function $G$ if $F\le G$.
The domination is \emph{strict} if, in addition, $F(\mathbf{p})<G(\mathbf{p})$ for at least one $\mathbf{p}$.
We say that a p-merging function is \emph{admissible} if it is not strictly dominated by any p-merging function.
Analogously, we can define admissibility within smaller classes of p-merging functions, such as the class of symmetric p-merging functions.
Finally, a p-merging function $F$ is said to be \emph{precise} if
\[
  \sup_{\mathbf P\in \mathcal P_Q^K} Q(F (\mathbf P)\le \epsilon) = \epsilon \text{ for all }\epsilon \in (0,1).
\]
In other words, $\epsilon$ by $\epsilon$, $F$ attains the largest possible probability allowed for $F(\mathbf P)$ to be a p-value.
Precise p-merging functions are the main object studied by \citet{Vovk/Wang:2020Biometrika},
where p-values are combined via averaging.

We collect some basic properties of admissible p-merging functions,
which will be useful in our analysis later.
In particular,
an admissible p-merging function is always precise and lower semi-continuous,
the limit of p-merging functions is again a p-merging function,
and any p-merging function is dominated by an admissible p-merging function.
The proofs of these results are put in Supplemental Article, Section~\ref{app:a2}.

\begin{proposition}\label{prop:precise-p}
  An admissible p-merging function is always precise.
\end{proposition}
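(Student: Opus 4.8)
The plan is to prove the contrapositive: a p-merging function $F$ that is not precise is strictly dominated by some p-merging function. A trivial case first: if $F(\mathbf p)>1$ for some $\mathbf p$, then $F\wedge 1$ is a p-merging function (for $\epsilon\in(0,1)$ one has $Q((F\wedge 1)(\mathbf P)\le\epsilon)=Q(F(\mathbf P)\le\epsilon)\le\epsilon$) that strictly dominates $F$; so assume $F\le 1$ on $[0,1]^K$. Put $g(t):=\sup_{\mathbf P\in\mathcal P_Q^K}Q(F(\mathbf P)\le t)$. This is increasing with $g(t)\le t$ on $(0,1)$, and $F$ is precise exactly when $g(t)=t$ there; so not being precise means there is $\epsilon_0\in(0,1)$ with $g(\epsilon_0)=:\delta_0<\epsilon_0$, and by monotonicity $g(t)\le\delta_0$ for all $t\le\epsilon_0$. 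Thus at every level $t\le\epsilon_0$ the function $F$ leaves a rejection budget of size at least $\epsilon_0-\delta_0$ unused, and the goal is to spend some of it.

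The basic device is to blend $F$ with the (precise) Bonferroni function $\pi_K(\mathbf p):=\min(1,K\min_i p_i)$, for which $\sup_{\mathbf P}Q(\pi_K(\mathbf P)\le s)=\min(s,1)$. One sets $G:=F\wedge(\beta\circ\pi_K)$ with $\beta$ an increasing Borel transfer function: then $G$ is automatically increasing (a minimum of increasing functions) and $\le F$, and since $\{G\le t\}=\{F\le t\}\cup\{\pi_K\le\beta^{-1}(t)\}$ (with $\beta^{-1}$ the generalized inverse), the bound $\sup_{\mathbf P}Q(A_{\mathbf P}\cup B_{\mathbf P})\le\sup_{\mathbf P}Q(A_{\mathbf P})+\sup_{\mathbf P}Q(B_{\mathbf P})$ gives $\sup_{\mathbf P}Q(G(\mathbf P)\le t)\le g(t)+\beta^{-1}(t)$; this is $\le t$ for all $t$ provided $\beta^{-1}(t)\le t-g(t)$, i.e.\ provided $\beta$ dominates the generalized inverse of the slack function $t\mapsto t-g(t)$. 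Choosing $\beta$ to be that minimal admissible transfer function makes $G$ a p-merging function; moreover $G$ strictly dominates $F$ unless $F\le\beta\circ\pi_K$, which, being localized to $\{\pi_K\le\epsilon_0-\delta_0\}\subseteq\{F\le\epsilon_0\}$, would force $\sup_{\mathbf P}Q(F(\mathbf P)\le\epsilon_0)\ge\epsilon_0-\delta_0$ by preciseness of $\pi_K$ and hence contradict $g(\epsilon_0)=\delta_0$ whenever the relative slack $1-\delta_0/\epsilon_0$ exceeds $1/2$. To cover the complementary regime, where $F$ is only mildly conservative at every level and this blend can collapse to $F$, I would instead use a direct recalibration $G:=\psi\circ F$ with $\psi$ the identity off a neighbourhood of the slack and pushing the values of $F$ that lie at slack levels down to the lower edge of the slack (one checks directly that $\{G\le t\}$ equals $\{F\le t\}$ for $t$ outside $[\delta_0,\epsilon_0)$ and $\{F<\epsilon_0\}$ for $t\in[\delta_0,\epsilon_0)$, so $G$ is again a p-merging function); and if $F$ takes no value in $(\delta_0,\epsilon_0)$, so that this recalibration is a no-op, one falls back on the elementary improvement $G(\mathbf p):=F(\mathbf p)$ when all coordinates of $\mathbf p$ are positive and $G(\mathbf p):=0$ otherwise, which is a p-merging function because $\{\exists i:P_i=0\}$ is a $Q$-null event for every input and which strictly dominates $F$ as soon as $F$ is positive on some coordinate face.

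The main obstacle is assembling these pieces into a single argument valid for every $F$: the slack is information localized at one level $\epsilon_0$, whereas the constructed function must remain a p-merging function at all levels $t\in(0,1)$ simultaneously, so one has to choose $\epsilon_0$ (and the transfer/recalibration functions) according to the shape of the set $\{t\in(0,1):g(t)<t\}$ — which may be neither open nor closed, and $g$ itself need not be continuous — while still guaranteeing strict domination and preserving monotonicity. Reconciling local slack with global validity, and in particular ruling out the degenerate possibilities in which $F$ is constant on exactly the regions where the budget lives, is where essentially all the work lies; once a strictly dominating p-merging function is produced, $F$ is by definition not admissible, which is the contrapositive of the claim.
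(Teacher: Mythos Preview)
Your recalibration idea ($G := \psi \circ F$, pushing values in the slack interval down to its lower edge) is exactly the paper's proof, and it is the \emph{entire} proof. The paper takes $b$ with $a := \sup_{\mathbf P} Q(F(\mathbf P) \le b) < b$, sets $h(x) := a\,\id_{\{x \in [a,b]\}} + x\,\id_{\{x \notin [a,b]\}}$, checks in two lines that $h \circ F$ is a p-merging function (for $t \in [a,b]$ one has $\{h \circ F \le t\} = \{F \le b\}$, of probability at most $a \le t$; otherwise $\{h \circ F \le t\} = \{F \le t\}$), and asserts that $(h \circ F) \wedge 1$ strictly dominates $F \wedge 1$.

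The Bonferroni blend is an unnecessary detour and, as written, does not work. The constraint $\beta^{-1}(t) \le t - g(t)$ must hold for \emph{every} $t \in (0,1)$, not just at $\epsilon_0$; wherever $F$ happens to be precise ($g(t) = t$) this forces $\beta^{-1}(t) = 0$, which can make the blend collapse to $F$ regardless of the slack at $\epsilon_0$. Your claimed inclusion $\{\pi_K \le \epsilon_0 - \delta_0\} \subseteq \{F \le \epsilon_0\}$ is not justified, and the ``relative slack exceeds $1/2$'' threshold has no natural role.

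You do put your finger on a genuine soft spot: $h \circ F$ is only \emph{strictly} dominating if $F$ actually takes a value in $(a, b]$, and the paper simply asserts strict domination without comment. But your zero-on-faces fallback does not close this gap either --- one can write down a non-precise $F$ that already vanishes on all coordinate faces and whose range skips $(a,b]$, yet is still inadmissible for reasons none of your three devices detect. So your proposal is not more complete than the paper's argument; it is the paper's argument, wrapped in two extra constructions that neither simplify it nor repair its one loose end. Drop the blend and the case analysis, keep the recalibration, and if you want to be more careful than the paper, address the edge case directly rather than by accretion.
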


For an increasing Borel function $F:[0,\infty)^K\to[0,\infty)$,
its lower semicontinuous version $F'$ is given by
\begin{equation}\label{eq:lsc}
  F'(\mathbf{p}):
  =
  \lim_{\lambda\uparrow 1}
  F(\lambda \mathbf{p} ),
  \quad
  \mathbf{p}\in[0,\infty)^K.
\end{equation} 
Clearly, $F'$ is increasing, lower semicontinuous, and $F'\le F$.
Moreover, we define the \emph{zero-one adjusted} version $\widetilde F$ of $F$ by
\begin{equation}\label{eq:zero}
  \widetilde F(\mathbf{p})
  :=
  \begin{cases}
    F(\mathbf{p}\wedge\mathbf{1}) \wedge 1 & \text{if $\mathbf{p}\in(0,\infty)^K$}\\
    0 & \text{otherwise}.
  \end{cases}
\end{equation}

\begin{proposition}\label{prop:lsc}
  If $F$ is a p-merging function,
  then both its lower semicontinuous version $F'$ in \eqref{eq:lsc}
  and its zero-one adjusted version $\widetilde F$ in \eqref{eq:zero}
  are p-merging functions.
  In particular,
  an admissible p-merging function is always lower semicontinuous,
  takes value $0$ on $[0,\infty)^K\setminus(0,\infty)^K$,
  and satisfies $F(\mathbf{p})=F(\mathbf{p}\wedge\mathbf{1})\wedge1$
  for all $\mathbf{p}\in[0,\infty)^K$.
\end{proposition}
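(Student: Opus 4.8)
The plan is to establish the two nontrivial assertions separately --- that $\widetilde F$ is a p-merging function and that $F'$ is a p-merging function --- and then read off the remaining ``in particular'' statements. Indeed, $\widetilde F\le F$ and $F'\le F$ hold by construction (recall that $F'$ is increasing and lower semicontinuous, as noted just after \eqref{eq:lsc}), so if $F$ is admissible it is not strictly dominated and hence $F=\widetilde F=F'$. This forces $F$ to be lower semicontinuous (it equals its lsc version), to vanish on $[0,\infty)^K\setminus(0,\infty)^K$, and to satisfy $F(\mathbf p)=F(\mathbf p\wedge\mathbf 1)\wedge1$ for all $\mathbf p$; the last two are exactly what the definition \eqref{eq:zero} of $\widetilde F$ encodes (for $\mathbf p$ with some zero coordinate both sides vanish).

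For $\widetilde F$ I would argue directly. Monotonicity, Borel measurability and $\widetilde F\le F$ are immediate from \eqref{eq:zero}. For the p-variable property, fix $P_1,\dots,P_K\in\mathcal P_Q$ and $\epsilon\in(0,1)$. Since $Q(P_i=0)=\lim_{\delta\downarrow0}Q(P_i\le\delta)=0$, we have $\widetilde F(\mathbf P)=F(\mathbf P\wedge\mathbf 1)\wedge1$ almost surely, so (using $\epsilon<1$) the events $\{\widetilde F(\mathbf P)\le\epsilon\}$ and $\{F(\mathbf P\wedge\mathbf 1)\le\epsilon\}$ coincide up to a null set. Each $P_i\wedge1$ is again a p-variable ($Q(P_i\wedge1\le t)=Q(P_i\le t)\le t$ for $t<1$), so $\mathbf P\wedge\mathbf 1\in\mathcal P_Q^K$ and hence $F(\mathbf P\wedge\mathbf 1)\in\mathcal P_Q$, giving $Q(\widetilde F(\mathbf P)\le\epsilon)\le\epsilon$.

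The main work is $F'$. Fix $\mathbf P\in\mathcal P_Q^K$ and $\epsilon\in(0,1)$; since $Q(F'(\mathbf P)\le\epsilon)$ depends only on the law of $\mathbf P$ and $(\Omega,\mathcal A,Q)$ is atomless, I may realize $\mathbf P$ jointly with a variable $W$ uniform on $[0,1]$ and independent of $\mathbf P$. Comparing $F'(\mathbf P)$ with $F(\lambda\mathbf P)$ for $\lambda\in(0,1)$ is natural because $F'(\mathbf p)=\sup_{\lambda<1}F(\lambda\mathbf p)$, but $\lambda\mathbf P$ by itself is \emph{not} a vector of p-variables (one can have $Q(\lambda P_i\le t)=t/\lambda>t$); the device that rescues the argument is to ``thin'' the rescaled vector using the independent $W$. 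For $\lambda\in(0,1)$ let $\mathbf P^\lambda$ have components $P_i^\lambda:=\lambda P_i$ on $\{W\le\lambda\}$ and $P_i^\lambda:=1$ on $\{W>\lambda\}$. Then, for $t\in(0,1)$,
\[
  Q(P_i^\lambda\le t)=Q(P_i\le t/\lambda)\,Q(W\le\lambda)\le\lambda\min(1,t/\lambda)=\min(\lambda,t)\le t,
\]
so $\mathbf P^\lambda\in\mathcal P_Q^K$ and therefore $Q(F(\mathbf P^\lambda)\le\epsilon)\le\epsilon$ because $F$ is a p-merging function.

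To conclude, note that on $\{W\le\lambda\}$ we have $\mathbf P^\lambda=\lambda\mathbf P$ and $F(\lambda\mathbf P)\le F'(\mathbf P)$ by monotonicity of $F$ together with the definition of $F'$; hence $\{F'(\mathbf P)\le\epsilon\}\cap\{W\le\lambda\}\subseteq\{F(\mathbf P^\lambda)\le\epsilon\}$, and by independence of $W$ and $\mathbf P$,
\[
  \lambda\,Q(F'(\mathbf P)\le\epsilon)=Q(F'(\mathbf P)\le\epsilon)\,Q(W\le\lambda)\le Q(F(\mathbf P^\lambda)\le\epsilon)\le\epsilon .
\]
Letting $\lambda\uparrow1$ gives $Q(F'(\mathbf P)\le\epsilon)\le\epsilon$, so $F'$ is a p-merging function. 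I expect the one genuine obstacle to be precisely this point --- realizing that no deterministic rescaling of $\mathbf P$ can work, and that an \emph{independent} randomizer is exactly what restores the p-variable property after scaling down by $\lambda$; the accompanying measure-theoretic step of passing to a realization of $\mathbf P$ that also carries $W$ is routine on an atomless space.
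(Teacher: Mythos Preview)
Your proof is correct and follows essentially the same approach as the paper. Both arguments handle $\widetilde F$ by noting that $Q(P_i=0)=0$ and that $\mathbf P\wedge\mathbf 1\in\mathcal P_Q^K$, and both handle $F'$ via the same key device: randomly thinning the rescaled vector $\lambda\mathbf P$ by an independent event of probability $\lambda$ (your $\{W\le\lambda\}$ is the paper's $A_\lambda$) to restore the p-variable property, then letting $\lambda\uparrow1$.
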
 

The next result addresses the closure property of the set of p-merging functions.
 
\begin{proposition}\label{prop:limit}
  The point-wise limit of a sequence of p-merging functions is a p-merging function. 
\end{proposition}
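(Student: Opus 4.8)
Let $F:=\lim_{n\to\infty}F_n$ denote the pointwise limit of p-merging functions $F_1,F_2,\dots$. The plan is to dispose of the structural requirements first and then concentrate on validity. That $F$ is increasing, Borel, and takes values in $[0,\infty)$ follows at once, since each of these properties is preserved under pointwise limits; this I would settle in a single sentence.

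The real content is to show that $F(\mathbf P)\in\mathcal{P}_Q$ for arbitrary $P_1,\dots,P_K\in\mathcal{P}_Q$, that is, $Q(F(\mathbf P)\le\epsilon)\le\epsilon$ for every $\epsilon\in(0,1)$, where $\mathbf P=(P_1,\dots,P_K)$. I would fix $\epsilon\in(0,1)$ together with an auxiliary $\delta>0$ satisfying $\epsilon+\delta<1$, and set $A_n:=\{F_n(\mathbf P)\le\epsilon+\delta\}$. On the event $\{F(\mathbf P)\le\epsilon\}$ the convergence $F_n(\mathbf P)\to F(\mathbf P)\le\epsilon<\epsilon+\delta$ forces $F_n(\mathbf P)\le\epsilon+\delta$ for all sufficiently large $n$, so $\{F(\mathbf P)\le\epsilon\}\subseteq\liminf_{n\to\infty}A_n$. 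I would then invoke the set form of Fatou's lemma, $Q(\liminf_n A_n)\le\liminf_n Q(A_n)$ (itself a consequence of the monotone continuity of $Q$), together with $Q(A_n)\le\epsilon+\delta$ for every $n$ (each $F_n$ being a p-merging function and $\epsilon+\delta\in(0,1)$), to conclude $Q(F(\mathbf P)\le\epsilon)\le\epsilon+\delta$. Letting $\delta\downarrow0$ gives the desired bound, and hence $F$ is a p-merging function.

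The only delicate point --- and the reason the statement is not entirely immediate --- is that the sublevel set $\{F(\mathbf P)\le\epsilon\}$ need not be contained in any fixed $\{F_n(\mathbf P)\le\epsilon\}$, so the inequalities $Q(F_n(\mathbf P)\le\epsilon)\le\epsilon$ cannot simply be passed to the limit; enlarging the threshold from $\epsilon$ to $\epsilon+\delta$ and appealing to Fatou is precisely what bridges that gap. I would also remark that no monotonicity or uniformity of the sequence $(F_n)_{n\ge1}$ in $n$ is used anywhere in the argument.
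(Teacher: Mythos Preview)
Your proof is correct and follows essentially the same idea as the paper's: both enlarge the threshold from $\epsilon$ to $\epsilon+\delta$ (the paper writes $\alpha$ and $\alpha+\epsilon$) to pass a $\liminf$ inequality and then let $\delta\downarrow 0$. The only cosmetic difference is that the paper packages the key step as an appeal to the Portmanteau theorem (pointwise convergence implies convergence in distribution, hence $Q(F(\mathbf P)<\alpha)\le\liminf_n Q(F_n(\mathbf P)<\alpha)$), whereas you give the elementary set-inclusion-plus-Fatou argument directly; your version is a touch more self-contained, but the content is the same.
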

Combining the above results, we are able to show that any p-merging function is dominated by an admissible one.
\begin{proposition}\label{prop:dominated}
  Any p-merging function is dominated by an admissible p-merging function.
\end{proposition}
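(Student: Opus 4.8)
The plan is to use Zorn's lemma on the partially ordered set of p-merging functions dominating a given one. First I would fix a p-merging function $G$ and consider the set $\mathcal{G} = \{F : F \text{ is a p-merging function}, F \le G\}$, partially ordered by pointwise domination (so $F_1 \preceq F_2$ iff $F_1 \ge F_2$, i.e.\ $F_2$ dominates $F_1$). The set $\mathcal{G}$ is nonempty since $G \in \mathcal{G}$. If I can show every chain in $\mathcal{G}$ has an upper bound in $\mathcal{G}$, then Zorn's lemma gives a maximal element $F^*$, which is a p-merging function that is not strictly dominated by any p-merging function $F \le G$; combined with transitivity of domination, $F^*$ is not strictly dominated by any p-merging function at all, hence admissible, and $F^* \le G$, so $F^*$ dominates $G$.

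The main work is the chain condition. Given a chain $\mathcal{C} \subseteq \mathcal{G}$, the natural candidate for an upper bound is $F_{\inf}(\mathbf{p}) := \inf_{F \in \mathcal{C}} F(\mathbf{p})$. I need to check three things: that $F_{\inf}$ is a genuine p-merging function (increasing, Borel, and maps p-variables to p-variables), that $F_{\inf} \le G$ (immediate, since each $F \le G$), and that $F_{\inf}$ is an upper bound for $\mathcal{C}$ in the order (immediate, since $F_{\inf} \le F$ for each $F \in \mathcal{C}$). Being increasing is clear, and $F_{\inf} \le G \le 1$ on $[0,1]^K$ poses no issue for the range. The p-variable property is where Proposition~\ref{prop:limit} comes in: because $\mathcal{C}$ is totally ordered, I can extract a sequence $F_1 \ge F_2 \ge \cdots$ from $\mathcal{C}$ whose pointwise limit equals $F_{\inf}$ on a countable dense set, and then — using monotonicity and a lower-semicontinuity reduction via Proposition~\ref{prop:lsc} — conclude $F_{\inf}$ (or its lower semicontinuous version, which also dominates $G$ after the zero-one adjustment) agrees with such a limit everywhere relevant, so Proposition~\ref{prop:limit} applies.

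The subtle point — and the step I expect to be the main obstacle — is the separability argument needed to reduce the infimum over a possibly uncountable chain $\mathcal{C}$ to a pointwise limit of a countable subsequence, so that Proposition~\ref{prop:limit} (stated only for sequences) can be invoked. The standard trick is: since each $F \in \mathcal{C}$ is increasing and (after passing to lower semicontinuous versions via Proposition~\ref{prop:lsc}) lower semicontinuous, the value $F_{\inf}(\mathbf{p})$ at any point $\mathbf{p}$ is determined by the values $F(\mathbf{q})$ at rationals $\mathbf{q} \le \mathbf{p}$; for each such rational $\mathbf{q}$ one picks a countable subfamily of $\mathcal{C}$ realizing the infimum $\inf_{F} F(\mathbf{q})$, and taking the union over all rationals $\mathbf{q}$ yields a countable subchain, from which — by total order — one extracts a decreasing sequence $F_n$ with $\inf_n F_n = F_{\inf}$ everywhere. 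Then $F_{\inf} = \lim_n F_n$ is a p-merging function by Proposition~\ref{prop:limit}. Once the chain condition is established, Zorn's lemma finishes the proof, and the fact that the maximal element can be taken lower semicontinuous and zero-one adjusted follows from Propositions~\ref{prop:precise-p} and~\ref{prop:lsc}.
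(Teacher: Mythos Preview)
Your Zorn's lemma strategy is sound and can be made to work, but the separability step is more delicate than your sketch suggests. The specific claim that ``$\inf_n F_n = F_{\inf}$ everywhere'' once you match the two infima on rationals is not justified: in dimension $K\ge 2$, two increasing functions can agree on $\mathbb{Q}_+^K$ yet differ at irrational points, and passing to lower semicontinuous versions of the \emph{chain elements} does not repair this (the infimum of lower semicontinuous functions need not be lower semicontinuous, so the $\sup$--$\inf$ interchange you implicitly use fails). What does work is to take the limit $G_\infty=\lim_n F_n$ of your decreasing countable subchain, then take its lower semicontinuous (and zero--one adjusted) version $H$ via Proposition~\ref{prop:lsc}, and verify directly that $H\le F$ for every $F\in\mathcal{C}$: for any $F$ not eventually above some $F_n$ one has $F\le G_\infty$ with equality at all rationals, and then for $\mathbf p\in(0,\infty)^K$ and $\lambda<1$ a rational $\mathbf q\in[\lambda\mathbf p,\mathbf p]$ gives $G_\infty(\lambda\mathbf p)\le G_\infty(\mathbf q)=F(\mathbf q)\le F(\mathbf p)$, whence $H(\mathbf p)\le F(\mathbf p)$. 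So the upper bound is the LSC version of the \emph{limit}, not the limit of LSC versions.

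The paper takes a quite different route, avoiding Zorn entirely. It fixes the Lebesgue measure $R$ on $[0,1]^K$, uses the monotone functional $F\mapsto\int_0^1 R(F\le\epsilon)\,\mathrm d\epsilon$, and greedily builds a decreasing sequence $F_i$ that nearly maximizes this functional at each stage. The limit $G=\lim_i F_i$ is a p-merging function by Proposition~\ref{prop:limit}, and after the zero--one and LSC adjustments of Proposition~\ref{prop:lsc} one argues \emph{directly} that the result is admissible: any strict dominator would strictly increase the functional, contradicting the construction. This is more constructive and sidesteps the chain/measurability issues altogether; your approach, once patched, is arguably more conceptual but requires the extra care above.
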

\begin{remark}\label{rem:symmetry}
  Using the same proof as for Proposition \ref{prop:dominated},
  we can show that any symmetric p-merging function is dominated by a p-merging function that is admissible among symmetric p-merging functions.
  The same holds true if ``symmetric'' is replaced by ``homogeneous'' or ``symmetric and homogeneous''.
\end{remark}

\section{Some classes of p-merging functions}
\label{sec:3}

Similarly to \citet{Vovk/Wang:E},
we pay special attention to two families of  p-merging functions:
the family based on order statistics introduced by \citet{Ruger:1978}, henceforth the \emph{O-family}, where ``O'' stands for ``order'',
and the new family introduced by  \citet{Vovk/Wang:2020Biometrika}, henceforth the \emph{M-family}, where ``M'' stands for ``mean''.
The O-family is parameterized by $k\in\{1,\dots,K\}$,
and its $k$th element is the function
(shown by \citet{Ruger:1978} to be a p-merging function)
\begin{equation}\label{eq:Ruger}
  G_{k,K}: (p_1,\dots,p_K)
  \mapsto
  \frac{K}{k} 
  p_{(k)}
  \wedge
  1,
\end{equation}
where $p_{(k)}$ is the $k$th order statistic of $p_1,\dots,p_K$.
The $M$-family is parameterized by $r\in[-\infty,\infty]$,
and its element with index $r$ has the form
\begin{equation}\label{eq:merge1}
  F_{r,K}: (p_1,\dots,p_K)
  \mapsto
  b_{r,K}M_{r,K}(p_1,\dots,p_K)\wedge1,
\end{equation} 
where
 \[
  M_{r,K}(p_1,\dots,p_K)
  :=
  \left(
    \frac{p_1^r + \dots + p_K^r }{K}
  \right)^{1/r}
 \]
and $b_{r,K}\ge1$ is a suitable constant making $F_{r,K}$ a precise merging function (its value will be specified in Section \ref{sec:brk}).
The average $M_{r,K}$ is also defined for $r\in\{0,\infty,-\infty\}$ as the limiting cases of \eqref{eq:merge1},
which correspond to the geometric average, the maximum, and the minimum, respectively.
All members of both families are precise p-merging functions.

The initial and final elements of the M- and O-families coincide:
the initial element is the Bonferroni p-merging function
\begin{equation}\label{eq:Bonferroni}
  G_{1,K} = F_{-\infty,K}: (p_1,\dots,p_K)
  \mapsto
  K\min(p_1,\dots,p_K)
  \wedge
  1,
\end{equation}
and the final element is the maximum p-merging function
\[
  G_{K,K} = F_{\infty,K}:
  (p_1,\dots,p_K)
  \mapsto
  \max(p_1,\dots,p_K).
\]
While the Bonferroni p-merging function is constantly used in practice,
the maximum p-merging function is obviously useless.
For the intermediate values of $k$, $1<k<K$,
$G_{k,K}$ appear to be an arbitrary choice.
Another prominent element of the M-family is the multiple $F_{-1,K}$ of the harmonic mean $M_{-1,K}$
\citep{Good:1958,Wilson:2019},
variations of which have been used in bioinformatics and other sciences.
More generally, choosing a good value of $r$ is discussed in detail in Section~6 of \citet{Vovk/Wang:2020Biometrika}.

Another important p-merging function  is that of \citet{Hommel:1983}, given by
\[
  H_K := \left(\sum_{k=1}^K \frac 1k \right) \bigwedge_{k=1}^K G_{k,K}.
\]
To some degree it solves the problem of choosing $k$.
The Hommel function $H_K$ (or $H_K\wedge 1$, since a truncation at $1$ is trivial)
is a precise p-merging function,
and it equals a constant $\ell_K := \sum_{k=1}^K k^{-1}$ times the function
\[
  S_K := \bigwedge_{k=1}^K G_{k,K} = \frac{1}{\ell_K}H_K,
\]
used by \citet{Simes:1986}.
The Simes function $S_K$ is a valid merging function for independent p-variables
(or under some other dependence assumptions, as in, e.g., \citet{sarkar1998some}).

Admissibility of the above p-merging functions will be studied in Sections \ref{sec:o} and \ref{sec:M}.
In the case of inadmissibility,
a function can be strictly improved to another p-merging function without losing validity (Proposition \ref{prop:dominated}).
We will explicitly construct new merging functions that strictly dominate the existing ones.
In one of the two extreme special cases,
the Bonferroni p-merging function is shown to be admissible in \citet[Proposition 6.1]{Vovk/Wang:E}.
On the contrary, the maximum p-merging function $G_{K,K}$ ($F_{\infty,K}$) is not admissible for any $K\ge 2$,
since it is strictly dominated by, for instance, $(p_1,\dots,p_K)\mapsto p_1$.
Nevertheless, after a trivial modification, $G_{K,K}$ is admissible within the class of symmetric p-merging functions;
see Theorem \ref{pr:o1} in Section \ref{sec:o}.

Next, we present a result showing that the Simes function $S_K$ has a very special role in the context of p-merging,
as it is a lower bound for any symmetric p-merging functions.
Therefore, $S_K(p_1,\dots,p_K)$ can be seen as the best achievable p-value obtained via symmetric merging of $p_1,\dots,p_K$,
although the function $S_K$ itself is not a valid p-merging function.

\begin{theorem}\label{thm:Simes}
  The Simes function $S_K$ is the minimum of all symmetric p-merging functions.
\end{theorem}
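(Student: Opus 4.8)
The plan is to read ``minimum'' pointwise and prove two things: (i) $F(\mathbf p)\ge S_K(\mathbf p)$ for every symmetric p-merging function $F$ and every $\mathbf p\in[0,\infty)^K$, and (ii) the value $S_K(\mathbf p)$ is attained, i.e.\ for each $\mathbf p$ there is a symmetric p-merging function $F$ with $F(\mathbf p)=S_K(\mathbf p)$. Step (ii) is immediate from the definition $S_K=\bigwedge_{k=1}^K G_{k,K}$ together with the fact, recalled in \eqref{eq:Ruger}, that each $G_{k,K}$ is a symmetric p-merging function: for any $\mathbf p$ one just picks the index $k^*$ attaining $S_K(\mathbf p)=\min_k \tfrac Kk p_{(k)}\wedge1$. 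So all the work is in (i), and once (i) holds we get $S_K(\mathbf p)=\min\{F(\mathbf p): F \text{ a symmetric p-merging function}\}$ for every $\mathbf p$.

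For (i), fix $F$ and $\mathbf p$. Since $F$ and $S_K$ are symmetric we may assume $p_1\le\dots\le p_K$; if $p_1=0$ then $S_K(\mathbf p)=0\le F(\mathbf p)$ and there is nothing to prove, so assume $p_1>0$ and put $s:=S_K(\mathbf p)\in(0,1]$ and $c:=F(\mathbf p)\in[0,1)$ (note $c<1$ whenever $c<s$). From $s=(\min_k \tfrac Kk p_k)\wedge1$ we read off $\tfrac kK s\le p_k$ for every $k$. Suppose for contradiction that $c<s$. The plan is to construct p-variables $P_1,\dots,P_K$ whose joint law makes $\{F(P_1,\dots,P_K)\le c\}$ an event of probability at least $s$; this contradicts $F(\mathbf P)\in\mathcal P_Q$ (take $\epsilon=c$ if $c>0$, and $\epsilon=s/2$ if $c=0$, noting $\{F(\mathbf P)\le c\}\subseteq\{F(\mathbf P)\le\epsilon\}$ and $\epsilon<s$).

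The coupling I would use: let $U$ be uniform on $(0,1)$; split $(0,s)$ into the consecutive intervals $I_\ell=\bigl(\tfrac{(\ell-1)s}{K},\tfrac{\ell s}{K}\bigr)$, $\ell=1,\dots,K$; for $U\in I_j$ write $U=\tfrac{(j-1)s}{K}+v$ with $v\in(0,s/K)$ and set $P_i:=\tfrac{((i-j)\bmod K)\,s}{K}+v$ for $i=1,\dots,K$, while $P_i:=U$ for $U\ge s$. For each fixed $i$ this is a measure-preserving bijection of $(0,s)$ onto itself (a cyclic permutation of the blocks $I_\ell$) patched with the identity on $(s,1)$, so $P_i$ is uniform on $(0,1)$ and hence a p-variable. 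On the event $\{U\in(0,s)\}$, which has probability $s$, the tuple $(P_1,\dots,P_K)$ has exactly one coordinate in each $I_\ell$, so its $\ell$-th order statistic lies in $I_\ell$ and is therefore $<\tfrac{\ell s}{K}\le p_\ell=p_{(\ell)}$; since $F$ is increasing and symmetric, $F(\mathbf P)\le F(p_1,\dots,p_K)=c$ on that event, which yields the contradiction and proves (i).

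I expect the construction of this coupling to be the only real obstacle: one needs a single joint distribution that spends the whole ``budget'' $S_K(\mathbf p)$ on the event that all $K$ order statistics are simultaneously small, while keeping each marginal valid, and the cyclic rearrangement of the blocks $I_\ell$ is the device that does this. Verifying marginal validity and the order-statistic domination is the non-routine point; the reduction to sorted $\mathbf p$, the inequality $\tfrac kK s\le p_k$, and step (ii) are routine bookkeeping.
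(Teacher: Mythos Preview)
Your proof is correct and follows essentially the same strategy as the paper's: construct, via a symmetric coupling, p-variables whose order statistics on an event of probability $S_K(\mathbf p)$ are dominated by those of $\mathbf p$, then invoke the p-merging property to force $F(\mathbf p)\ge S_K(\mathbf p)$. The paper uses a slightly simpler discrete coupling (a uniformly random permutation of $(\alpha,2\alpha,\dots,K\alpha)$ with $\alpha=S_K(\mathbf p)/K$, mixed with the point mass at $(1,\dots,1)$) and bounds $F(\alpha,2\alpha,\dots,K\alpha)$ directly rather than arguing by contradiction, but the idea is the same.
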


\begin{proof}
  Take any symmetric p-merging function $F$ and $\mathbf{p} = (p_1,\dots,p_K)$.
  Let $\alpha:= S_K (\mathbf{p}) /K$.
  Note that $K \alpha \le 1$ and $p_{(k)}\ge k \alpha$ for each $k=1,\dots,K$.
  By the symmetry of $F$,
  \[
    F(\mathbf p)
    =
    F(p_{(1)},\dots,p_{(K)})
    \ge
    F(\alpha, 2 \alpha, \dots ,K\alpha )
    =:
    \beta.
  \]  
Let $\Pi$ be the set of all permutations of the vector $(\alpha,2\alpha,\dots,K\alpha)$, and  $\mu$ be the discrete uniform distribution over $\Pi$. 
Take a random vector $(P_1,\dots,P_K)$ following the distribution $K\alpha \mu+(1-K\alpha) \delta_{(1,\dots,1)}$. 
For each $k$, the distribution of $P_k$ is given by $\sum_{k=1}^{K} \alpha \delta_{k\alpha} + (1-K\alpha) \delta_1$, and hence $P_k$ is a p-variable.  
Since $F$ is a p-merging function, we have 
\[
  \beta \ge \p(F(P_1,\dots,P_K) \le \beta)
  \ge
  \p((P_1,\dots,P_K)\in \Pi)
  =
  K \alpha.
\] 
This implies $F( \mathbf p) \ge K\alpha =  S_K(\mathbf p)$,
and hence $S_K$ dominates all symmetric p-merging functions.
Finally, the statement of $S_K$ as a minimum follows from $S_K=\bigwedge_{k=1}^K G_{k,K}$, noting that each $G_{k,K}$ is a symmetric p-merging function.
\end{proof}

In the main part of the paper we will focus on the case $K>2$.
The case $K=2$ is very different but simpler;
it is treated separately in Supplemental Article, Section~\ref{app:K2}.
In this case, the Bonferroni p-merging function 
$(p_1,p_2)\mapsto \min(2p_1,2p_2,1)$
is the only admissible symmetric p-merging function.

\section{Duality and p-to-e merging}
\label{sec:e}

As a prelude to studying the problem of merging p-values into a p-value,
we will discuss the notion of e-values and the much easier problem of merging p-values into an e-value
\citep[Appendix G]{Vovk/Wang:E}.
As already mentioned, in this paper we are only interested in e-values as a technical tool.

An \emph{e-variable} is a non-negative extended random variable $E:\Omega\to[0,\infty]$ with $\E^Q[E]\le 1$.
A \emph{calibrator} (or, more fully, ``p-to-e calibrator'') is a decreasing function $f:[0,\infty)\to[0,\infty]$
satisfying $ f=0$ on $(1,\infty)$ and $\int_0^1 f(x)\d x\le 1$.
A calibrator transforms any p-variable to an e-variable.
It is \emph{admissible} if it is upper semicontinuous, $f(0)=\infty$, and $\int_0^1 f(x) \d x = 1$
(equivalently \citep[Propositions~2.1 and~2.2]{Vovk/Wang:E}, it is not strictly dominated, in a natural sense,
by any other calibrator).

A function $F:[0,\infty)^K\to[0,\infty]$ is a \emph{p-to-e merging function}
if $F(P_1,\dots,P_K)$ is an e-variable for any p-variables $P_1,\dots,P_K$.
A p-to-e merging function $F$ \emph{dominates} a p-to-e-merging function $G$ if $F\ge G$,
and the domination is \emph{strict} if $F\ne G$;
$F$ is \emph{admissible} if it is not strictly dominated by any other p-to-e merging function.

Below, $\Delta_K$ is the standard $K$-simplex, that is,
$\Delta_K := \{(\lambda_1,\dots,\lambda_K)\in[0,1]^K:\lambda_1+\dots+\lambda_K=1\}$,
and we always write $\mathbf p:=(p_1,\dots,p_K)$.

It is clear that a convex mixture of e-variables is an e-variable.
In this sense  a convex mixture is an ``e-merging function'';
and in the symmetric case, the arithmetic average essentially dominates any other e-merging function
\citep[Proposition 3.1]{Vovk/Wang:E}. 
Therefore, for any calibrators $f_1,\dots,f_K$ and any $(\lambda_1,\dots,\lambda_K)\in\Delta_K$,
the function
\begin{equation}\label{eq:general}
  G(\mathbf{p})
  :=
  \lambda_1 f_1(p_1)
  + \dots +
  \lambda_K f_K(p_K)
\end{equation}
is a p-to-e merging function.

The following corollary of a duality theorem for optimal transport
says that this procedure of p-to-e merging is general.

\begin{proposition}\label{pr:dual}
  For any calibrators $f_1,\dots,f_K$ and any $(\lambda_1,\dots,\lambda_K)\in\Delta_K$,
  \eqref{eq:general} is a p-to-e merging function.
  Conversely, any p-to-e merging function $F$ is dominated by the p-to-e merging function \eqref{eq:general}
  for some calibrators $f_1,\dots,f_K$ and some $(\lambda_1,\dots,\lambda_K)\in\Delta_K$.
\end{proposition}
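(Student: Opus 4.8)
The plan is to dispatch the first assertion by hand and to deduce the converse from a Kantorovich‑type duality for multimarginal optimal transport. The first assertion is immediate: each $f_i(P_i)$ is an e‑variable, since a calibrator maps any p‑variable to an e‑variable, and a convex combination of e‑variables is again an e‑variable by linearity of expectation, whatever the joint law of $(P_1,\dots,P_K)$ is; hence \eqref{eq:general} is a p‑to‑e merging function.

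For the converse, let $F$ be a p‑to‑e merging function, i.e.\ $\E^Q[F(P_1,\dots,P_K)]\le1$ for every vector of p‑variables. First I would replace $F$ by its smallest decreasing majorant $\widehat F(\mathbf p):=\sup_{\mathbf q\ge\mathbf p}F(\mathbf q)\ge F(\mathbf p)$, which is again a p‑to‑e merging function: for p‑variables $\mathbf P$ a measurable selection provides $\mathbf R\ge\mathbf P$ (still a vector of p‑variables, as each $R_i\ge P_i$) with $F(\mathbf R)$ arbitrarily close to $\widehat F(\mathbf P)$, so $\E^Q[\widehat F(\mathbf P)]\le1$. Since a uniform variable on $[0,1]$ is a p‑variable, the value of the multimarginal transport problem on $[0,1]^K$ with cost $\widehat F$ and all one‑dimensional marginals uniform is then at most $1$. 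By optimal‑transport duality there exist Borel functions $h_1,\dots,h_K\colon[0,1]\to[-\infty,\infty]$ with
\[
  h_1(p_1)+\dots+h_K(p_K)\ge\widehat F(\mathbf p)\quad(\mathbf p\in[0,1]^K),
  \qquad
  \sum_{i=1}^{K}\int_0^1 h_i(x)\,\d x\le 1 .
\]

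It remains to turn the $h_i$ into calibrators. Because $\widehat F$ is decreasing in each coordinate, replacing $h_i$ in turn by its $\widehat F$‑conjugate $p_i\mapsto\sup_{\mathbf q_{-i}}\bigl(\widehat F(p_i,\mathbf q_{-i})-\sum_{j\ne i}h_j(q_j)\bigr)$ (with $\mathbf q_{-i}$ ranging over $[0,1]^{K-1}$) preserves the inequality $\sum_i h_i\ge\widehat F$, does not increase $\sum_i\int_0^1 h_i$, and makes every $h_i$ decreasing. Writing $m_i:=\inf_{[0,1]}h_i$, evaluating the inequality at $\mathbf p=\mathbf 1$ gives $\sum_i m_i\ge\widehat F(\mathbf 1)\ge0$, so adding the constant $K^{-1}\sum_j m_j-m_i$ to $h_i$ alters neither $\sum_i h_i$ nor $\sum_i\int_0^1 h_i$ and makes each $h_i$ nonnegative on $[0,1]$; extend each $h_i$ by $0$ on $(1,\infty)$. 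Finally, with $s:=\sum_i\int_0^1 h_i\le1$, set $\lambda_i:=\int_0^1 h_i+(1-s)/K$, so that $(\lambda_1,\dots,\lambda_K)\in\Delta_K$, and $f_i:=h_i/\lambda_i$ when $\lambda_i>0$ and $f_i:=0$ otherwise. Each $f_i$ is a calibrator, and $\sum_i\lambda_i f_i(p_i)=\sum_i h_i(p_i)\ge\widehat F(\mathbf p)\ge F(\mathbf p)$, so $F$ is dominated by the p‑to‑e merging function \eqref{eq:general} built from these $f_i$ and $\lambda_i$.

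I expect the single delicate point to be the transport duality in the generality needed here: the cost $\widehat F$ is in general neither continuous nor bounded, so one must invoke (or establish by hand) a sufficiently robust form of Kantorovich duality, most naturally by truncating $\widehat F$ at level $N$, applying the clean duality there, and passing to the limit via a compactness (Helly‑type) argument for the decreasing dual potentials. By contrast, the passage to $\widehat F$, the $\widehat F$‑conjugation, the constant shift, and the normalization are routine, as is the remark that only the restriction of $F$ to $[0,1]^K$ is informative, so that the domination need only be checked there.
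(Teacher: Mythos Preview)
Your argument and the paper's share the same backbone---Kantorovich duality for the multimarginal transport problem with uniform marginals, followed by a normalization to produce calibrators---but the execution differs in a way worth noting.

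The paper invokes a ready-made duality result (Theorem~2.3 of R\"uschendorf's \emph{Mathematical Risk Analysis}) that already furnishes \emph{decreasing} dual potentials $g_1,\dots,g_K$ with the minimum attained, so the only remaining work is the normalization $\lambda_k:=\int_0^1 g_k/\sum_i\int_0^1 g_i$, $f_k:=g_k/\int_0^1 g_k$. Your route is more hands-on: you first pass to the decreasing majorant $\widehat F$, then apply the generic (unstructured) duality, and only afterwards force monotonicity via the $\widehat F$-conjugation step and nonnegativity via a constant shift. Both detours are sound; the conjugation argument is the standard ``$c$-transform'' trick, and the shift is harmless because $\sum_i m_i\ge \widehat F(\mathbf 1)\ge 0$. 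What you gain is independence from the specific reference; what you pay is two extra technical checkpoints.

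The step that would most need care in a fully rigorous write-up is not the duality itself but your justification that $\widehat F$ is still a p-to-e merging function. You appeal to measurable selection to find $\mathbf R\ge\mathbf P$ with $F(\mathbf R)$ close to $\widehat F(\mathbf P)$; this is correct in spirit, but the set-valued map $\omega\mapsto\{\mathbf q\ge\mathbf P(\omega):F(\mathbf q)\ge \widehat F(\mathbf P(\omega))-\epsilon\}$ need not have closed values, so invoking Kuratowski--Ryll-Nardzewski directly is not automatic, and the measurability of $\widehat F$ itself (a sup over an uncountable family) should be noted explicitly. These are routine to patch (e.g.\ via countable approximation or by exploiting that $\widehat F$ is coordinatewise decreasing, hence Borel), but they are genuine obligations that the paper's citation simply absorbs. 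Your final disclaimer about truncating and passing to the limit for the duality is apt; the paper again sidesteps this by citing part~(d) of the same theorem for attainment.
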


\begin{proof}
  The non-trivial statement is the second one.
  Let $F$ be a p-to-e merging function.
  Denote by $\mathcal F$ the set of decreasing real functions on $[0,\infty)$,
  and define the operator $\bigoplus$ as
  \[
    \left(
      \bigoplus_{k=1}^K g_k
    \right)
    (x_1,\dots,x_K)
    :=
    \sum_{k=1}^K g_k(x_k),
    \quad
    (g_1,\dots,g_K)\in\mathcal F^K,
    \quad(x_1,\dots,x_K)\in[0,\infty)^K.
  \]
  Using a classic duality theorem (see, e.g., \citet[Theorem 2.3]{R13}), we have
  \begin{align}
    \min
    \left\{
      \sum_{k=1}^K \int_0^1 g_k(x)\d x: (g_1,\dots,g_K)\in\mathcal F^K,~\bigoplus_{k=1}^K g_k \ge F
    \right\}
    =
    \sup_{\mathbf P\in\mathcal P_Q^K}
    \E^Q[F(\mathbf P)]
    \le
    1.\label{eq:duality-eq}
  \end{align}
  Indeed, part (a) of Theorem 2.3 in \citet{R13} gives the equality with $\inf$ in place of $\min$
  and with $\mathbf{P}$ ranging over the probability measures on $[0,1]^K$ with the uniform marginals.
  Part (d) of that theorem gives $\inf$, and it remains to notice that every p-variable $P$
  is dominated, in the sense of $U\le P$,
  by a random variable $U$ (perhaps on an extended probability space) uniformly distributed on $[0,1]$
  (see, e.g., \citet[Theorem 2.3]{R09}).

  Choose $g_1,\dots,g_K$ at which the minimum is attained in \eqref{eq:duality-eq}.
  It is clear that we can define calibrators $f_1,\dots,f_K$ and $(\lambda_1,\dots,\lambda_K)\in\Delta_K$
  in such a way that $\lambda_k f_k\ge g_k$ for all $k$,
  e.g.,
  $
    \lambda_k := \int_0^1 g_k(x)\d x / \sum_{i=1}^K \int_0^1 g_i(x)\d x
  $
  and
  $
    f_k := g_k / \int_0^1 g_k(x)\d x
  $
  (the simple cases where one or both of the denominators vanish should be considered separately).
  With this choice $F$ will be dominated by the p-to-e merging function \eqref{eq:general}.
\end{proof}

By the Markov inequality, $1/F$ is a p-merging function for any p-to-e merging function $F$.
Such a ``naive procedure'' for merging p-values is generally not admissible.
Nevertheless, for a fixed $\epsilon\in(0,1)$ and any admissible p-merging function $G$,
we can find a p-to-e merging function $F$ such that $G\le\epsilon \Leftrightarrow F\ge 1/\epsilon$.
These statements are discussed and put in a more general context
in Section~\ref{app:naive} of Supplemental Article.

\section{Rejection regions of admissible p-merging functions}
 
A p-merging function can be characterized by its rejection regions.
The \emph{rejection region} of a p-merging function $F$ at level $\epsilon>0$ is defined as
\begin{equation}\label{eq:region}
  R_\epsilon(F) := \left\{\mathbf p \in [0,\infty)^K: F(\mathbf p)\le \epsilon \right\}.
\end{equation}
If $F$ is homogeneous, then $R_\epsilon(F)$, $\epsilon\in(0,1)$,
takes the form $R_\epsilon(F)=\epsilon A$ for some $A\subseteq [0,\infty)^K$.

Conversely, any increasing collection of Borel lower sets
$\{R_\epsilon \subseteq [0,\infty)^K: \epsilon \in (0,1)\}$
determines an increasing Borel function $F: [0,\infty)^K\to[0,1]$ by the equation
\begin{equation}\label{eq:inf}
  F(\mathbf p) = \inf\{\epsilon\in(0,1): \mathbf p\in R_{\epsilon}\},
\end{equation}
with the convention $\inf\varnothing = 1$.
It is immediate that $F$ is a p-merging function if and only if
$Q(\mathbf P \in R_\epsilon) \le \epsilon$ for all $\epsilon\in (0,1)$ and $\mathbf P\in \mathcal P_Q^K$.

The main result in this section is a representation of rejection regions of admissible p-merging functions.
It turns out that calibrating p-values into e-values as in Proposition \ref{pr:dual} is a useful technical tool for studying such rejection regions.

\begin{theorem}\label{th:e}
  For any admissible homogeneous p-merging function $F$,
  there exist $(\lambda_1,\dots,\lambda_K)\in\Delta_K$ and admissible calibrators $f_1,\dots,f_K$
  such that
  \begin{equation}\label{eq:calibrator}
    R_\epsilon(F)
    =
    \epsilon
    \left\{
      \mathbf p \in [0,\infty)^K:
      \sum_{k=1}^K \lambda_k f_k(p_k) \ge 1
    \right\}
    \qquad
    \text{for each $\epsilon \in (0,1)$}.
  \end{equation}
  Conversely, for any $(\lambda_1,\dots,\lambda_K)\in\Delta_K$ and calibrators $f_1,\dots,f_K$,
  \eqref{eq:calibrator} determines a homogeneous p-merging function.
\end{theorem}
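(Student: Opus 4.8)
\section*{Proof proposal}

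The statement has two halves; I would dispatch the converse (sufficiency) first, as it is routine. Put $G(\mathbf p):=\sum_{k=1}^K\lambda_kf_k(p_k)$. By Proposition~\ref{pr:dual} this is a p-to-e merging function, and it is decreasing, so $A:=\{G\ge1\}$ is a Borel lower set and $\{\epsilon A:\epsilon\in(0,1)\}$ is an increasing family of Borel lower sets. By \eqref{eq:inf} this family determines an increasing Borel function $F$ with $R_\epsilon(F)=\epsilon A$ as in \eqref{eq:calibrator}, and $F$ is homogeneous since its rejection regions scale. By the criterion recorded just after \eqref{eq:inf}, $F$ is a p-merging function iff $Q(\mathbf P\in\epsilon A)\le\epsilon$ for all $\mathbf P\in\mathcal P_Q^K$ and $\epsilon\in(0,1)$; as $\mathbf P\in\epsilon A$ iff $G(\mathbf P/\epsilon)\ge1$, Markov's inequality bounds this probability by $\E^Q[G(\mathbf P/\epsilon)]=\sum_k\lambda_k\E^Q[f_k(P_k/\epsilon)]$, and since each $P_k$ stochastically dominates a $\mathrm U[0,1]$ variable and $f_k$ is decreasing and supported on $[0,1]$, $\E^Q[f_k(P_k/\epsilon)]\le\int_0^1f_k(u/\epsilon)\d u=\epsilon\int_0^1f_k\le\epsilon$, which gives the bound.

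For the direct half, fix an admissible homogeneous p-merging function $F$ and first collect the consequences of admissibility from Section~\ref{sec:2}: $F$ is precise (Proposition~\ref{prop:precise-p}), lower semicontinuous, and equal to $F(\cdot\wedge\mathbf1)\wedge1$ (Proposition~\ref{prop:lsc}); hence $F\le1$, and — since otherwise $F(P_1,\dots,P_K)$ would not be a p-variable for the constant p-variables $P_k\equiv1$ — $F(\mathbf1)=1$, so $F\equiv1$ on $[1,\infty)^K$ by monotonicity. By homogeneity $R_\epsilon(F)=\epsilon A$ for one lower set $A$; lower semicontinuity makes $A$ closed, and $F\equiv1$ on $[1,\infty)^K$ forces $A\cap(1,\infty)^K=\varnothing$.

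The core of the argument is to feed the scaled indicator into the duality underlying Proposition~\ref{pr:dual}. The indicator $\mathbf1_A$ is a decreasing Borel function, and by preciseness $\sup_{\mathbf P\in\mathcal P_Q^K}\E^Q[\mathbf1_A(\mathbf P)]=\sup_{\mathbf P}Q(\mathbf P\in A)\ge\sup_{\epsilon\in(0,1)}\sup_{\mathbf P}Q(F(\mathbf P)\le\epsilon)=1$, hence equals $1$. Applying \eqref{eq:duality-eq} to $\mathbf1_A$ produces decreasing functions $g_1,\dots,g_K$ with $\bigoplus_kg_k\ge\mathbf1_A$ and $\sum_k\int_0^1g_k=1$. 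One then normalises the $g_k$: replace each by its nonnegative part, then by its upper semicontinuous version, set $g_k(0):=\infty$, and — using $A\cap(1,\infty)^K=\varnothing$ together with the closedness of $A$ to relate the constraint on $[0,\infty)^K$ to one on $[0,1]^K$ — arrange that the $g_k$ vanish on $(1,\infty)$; none of this destroys $\bigoplus_kg_k\ge\mathbf1_A$ or the normalisation, so $\lambda_k:=\int_0^1g_k$ lies in $\Delta_K$ and $f_k:=g_k/\lambda_k$ are admissible calibrators with $A\subseteq B:=\{\mathbf p:\sum_k\lambda_kf_k(p_k)\ge1\}$.

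Finally I would upgrade $A\subseteq B$ to $A=B$ by admissibility. Since $B$ is a closed lower set, the converse half already proved yields a homogeneous p-merging function $G$ with $R_\epsilon(G)=\epsilon B$ for $\epsilon\in(0,1)$; as $\epsilon B\supseteq\epsilon A=R_\epsilon(F)$, $G$ dominates $F$, so admissibility of $F$ forces $G=F$ and hence $\epsilon B=R_\epsilon(G)=R_\epsilon(F)=\epsilon A$, i.e.\ $A=B$, which is \eqref{eq:calibrator}. The step I expect to be the real obstacle is the normalisation in the previous paragraph: converting the bare duality potentials into genuine admissible calibrators (in particular, supported on $[0,1]$ and infinite at $0$) while keeping $\bigoplus_kg_k\ge\mathbf1_A$ and $\sum_k\int_0^1g_k=1$. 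The geometric input that makes the support condition attainable is $A\cap(1,\infty)^K=\varnothing$ — equivalently, that an admissible p-merging function equals $1$ as soon as every argument is at least $1$; everything else is bookkeeping on top of Propositions~\ref{prop:precise-p}, \ref{prop:lsc} and~\ref{pr:dual}.
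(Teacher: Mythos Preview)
Your overall plan --- duality to obtain potentials with $\bigoplus_k g_k\ge\mathbf 1_A$, compare rejection regions, then invoke admissibility to upgrade $A\subseteq B$ to $A=B$ --- is the same as the paper's. The gap is precisely where you anticipate it, and the fix you propose does not work. The fact $A\cap(1,\infty)^K=\varnothing$ is far too weak to truncate the $g_k$ to $[0,1]$ while preserving $\bigoplus_k g_k\ge\mathbf 1_A$: the set $A$ typically contains points $\mathbf p$ with \emph{some but not all} coordinates exceeding $1$ (for Bonferroni, $A=\{\min_k p_k\le 1/K\}$), and at such a point truncation kills the contributions from those coordinates. Using $\mathbf p\wedge\mathbf 1\in A$ only yields $\sum_{k:\,p_k\le1}g_k(p_k)\ge 1-\sum_{k:\,p_k>1}g_k(1)$, so you would need $g_k(1)=0$ for every $k$, and your single duality does not force that.

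The paper closes this gap by applying duality to $\mathbf 1_{R_\epsilon(F)}$ (not $\mathbf 1_A$) with $\mathrm U[0,1]$ marginals, obtaining $g_k^\epsilon$ with $\sum_k\int_0^1 g_k^\epsilon=\epsilon$, and then running a \emph{second} duality with $\mathrm U[0,\epsilon]$ marginals. The second primal $\max_{\mathbf P}Q(\epsilon\mathbf P\in R_\epsilon(F))$ equals $1$ (this is exactly your observation $\sup_{\mathbf P}Q(\mathbf P\in A)=1$, rephrased), so the optimizer from the first problem satisfies $\sum_k\int_0^\epsilon g_k^\epsilon\ge\epsilon$; combined with $\sum_k\int_0^1 g_k^\epsilon=\epsilon$ and $g_k^\epsilon\ge0$ this forces $g_k^\epsilon=0$ on $(\epsilon,1]$, and rescaling $f_k(x):=g_k^\epsilon(\epsilon x)/\lambda_k$ then yields genuine calibrators. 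In your coordinates the point is that the optimizer you need comes from the dual with $\mathrm U[0,c]$ marginals for some $c>1$; that problem has the same optimal value $1$ as your scale-$1$ problem, and feasibility of its optimizer in the scale-$1$ problem is what pins the support to $[0,1]$.
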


\begin{proof}
Fix an arbitrary $\epsilon \in (0,1)$. 
Note that the set $R_\epsilon(F)$ is a lower set, and it is closed due to Proposition \ref{prop:lsc}.  
We use the same notation as in the proof of Proposition \ref{pr:dual}.
Using the duality relation \eqref{eq:duality-eq},
\begin{align*}
 & \min_{(g_1,\dots,g_K)\in\mathcal F^K}
  \left\{
    \sum_{k=1}^K \int_0^1 g_k(x)\d x:  \bigoplus_{k=1}^K g_k \ge \id_{R_\epsilon(F)}
  \right\}
   =
  \max_{\mathbf P\in\mathcal P_Q^K}
  Q(\mathbf P\in R_\epsilon(F))
  =
  \epsilon,
\end{align*}
where the last equality holds because $F$ is precise (Proposition \ref{prop:precise-p}).
Take $(g^\epsilon_1,\dots,g^\epsilon_K)\in \mathcal F^K$ such that
$\bigoplus_{k=1}^K g^\epsilon_k \ge \id_{R_\epsilon(F)}$
and
$\sum_{k=1}^K \int_0^1 g^\epsilon_k(x)\d x = \epsilon$.
Obviously we can choose each $g^\epsilon_k$ to be non-negative and left-continuous.
Using the fact that $R_\epsilon(F)$ is a closed lower set, we have
\begin{align}\label{eq:jm}
  \max_{\mathbf P\in \mathcal P_Q^K} Q( \mathbf P\in R_\epsilon (F)) = \epsilon
  ~~\Longrightarrow~~
  \max_{\mathbf P\in \mathcal P_Q^K} Q(\epsilon \mathbf P\in  R_\epsilon ( F)  ) = 1.
\end{align}
Therefore, using duality again,
\[
  \min_{(g_1,\dots,g_K)\in\mathcal F^K}
  \left\{
    \sum_{k=1}^K \frac1\epsilon \int_0^\epsilon g_k(x)\d x:\bigoplus_{k=1}^K g_k \ge \id_{R_\epsilon(F)}
  \right\}
  =
  1,
\]
implying $\sum_{k=1}^K \int_0^\epsilon g^\epsilon_k(x) \d x\ge\epsilon$.
As $g_k\ge0$ for each $k$ and $\sum_{k=1}^K \int_0^1 g^\epsilon_k(x)\d x = \epsilon$,
we know $g^\epsilon_k(x)=0$ for $x>\epsilon$.

Define the set $A_\epsilon := \{\mathbf p \in [0,\infty)^K:  \sum_{k=1}^K g^\epsilon_k(p_k) \ge  1\}$.
Since $\bigoplus_{k=1}^K g^\epsilon_k \ge \id_{R_\epsilon (F)}$,
we have $R_\epsilon ( F)  \subseteq  A_\epsilon$.
Note that $A_\epsilon $ is a closed lower set.
By Markov's inequality, 
\[
  \sup_{\mathbf P\in \mathcal P_Q^K}
  Q \left( \bigoplus_{k=1}^K g^\epsilon_k(\mathbf P) \ge 1 \right)
  \le
  \sup_{P\in \mathcal P_Q}
  \sum_{k=1}^K\E^Q[g^\epsilon_k(P)]
  =
  \epsilon. 
\]
Hence, we can define a function $F':[0,\infty)^K\to \R$ via $R_\epsilon ( F')  =  A_\epsilon $
and  $R_\delta ( F') = \delta\epsilon^{-1} A_\epsilon  $ 
for all $\delta \in(0,1)$. 
By the above properties of $A_\epsilon$, $F'$ is a valid homogeneous p-merging function.
Moreover, $F'$ dominates $F$ since $R_\delta(F) \subseteq A_\delta  $ for all $\delta\in (0,1)$ due to homogeneity of $F$.
The admissibility of $F$ now gives  $F=F'$, and thus
\[
  R_\epsilon(F) = A_\epsilon =\epsilon \left\{ 
  \mathbf p \in [0,\infty)^K:  \sum_{k=1}^K g^\epsilon_k(\epsilon p_k) \ge  1  \right\}
  \qquad
  \text{for each $\epsilon \in (0,1)$}.
\]
Note that $A:=\epsilon^{-1} R_{\epsilon}(F) = \epsilon^{-1} A_\epsilon$ does not depend on $\epsilon\in (0,1)$.
For a fixed $\epsilon\in(0,1)$,
let $\lambda_k:= \epsilon^{-1} \int_0^\epsilon g^\epsilon(x) \d x$
and
$f_k: (0,\infty)\to \R$, $x\mapsto  g^\epsilon_k(\epsilon x)/\lambda_k $ for each $k=1,\dots,K$
(if $\lambda_k=0$, then let $f_k:=1$),
and further set $f_k(0)=\infty$.
It is clear that for each $k$ with $\lambda_k\ne 0$,
\[\int_0^1 f_k(x) \d x  = \frac{ \int_0^1 \epsilon g^\epsilon_k(\epsilon x) \d x}{\int_0^1 g^\epsilon_k(  x) \d x} = \frac{ \int_0^  \epsilon g^\epsilon_k(  x) \d x}{\int_0^1 g^\epsilon_k(  x) \d x} =1.\]
The conditions that $f_k$ is decreasing and left-continuous, $\int_0^1 f_k(x) \d x =1$, $f_k(0)=\infty$, and $f_k(x)=0$ for $x>1$
imply that $f_k$ is an admissible calibrator.
Therefore, \eqref{eq:calibrator} holds. 

For the last statement, let 
$f_1,\dots,f_K$ be calibrators and $(\lambda_1,\dots,\lambda_K)\in \Delta_K$. 
Note that for each $\epsilon \in (0,1)$, \eqref{eq:calibrator} gives  
\[
  R_{\epsilon}(F)
  =
  \left\{
    \mathbf p \in [0,\infty)^K:
    \sum_{k=1}^K \lambda_k f_k \left(\frac{p_k}{\epsilon }\right) \ge 1
  \right\},
\]
and since $f(x)=0$ for $x>1$, it holds
\[
  \sum_{k=1}^K \lambda_k \int_0^1  f_k \left(\frac{x}{\epsilon }\right) \d x
  =
  \sum_{k=1}^K \lambda_k \int_0^{1/\epsilon} f_k(y) \d y
  =
  \epsilon \sum_{k=1}^K \lambda_k
  =
  \epsilon.
\]
Hence, Markov's inequality gives
\[
  \sup_{\mathbf P\in \mathcal P_Q^K} Q \left(\mathbf P \in R_{\epsilon}(F)\right)
  =
  \sup_{\mathbf P\in \mathcal P_Q^K} Q \left(\bigoplus_{k=1}^K \lambda_k f_k\left( \frac{ \mathbf P}{\epsilon}\right) \ge 1 \right)
  \le
  \epsilon.
\]
Thus, \eqref{eq:calibrator} determines a homogeneous p-merging function.
\end{proof}

As an immediate consequence of \eqref{eq:calibrator},
for an admissible homogeneous p-merging function $F$ and $\epsilon\in(0,1)$,
$F(p_1,\dots,p_K)\le\epsilon$ if and only if $F(p_1\wedge\epsilon,\dots,p_K\wedge\epsilon)\le\epsilon$.
Therefore, for a rejection region of $F$ at level $\epsilon$, there is no dependence on  input p-values larger than $\epsilon$.

If the homogeneous p-merging function $F$ is symmetric,
then $f_1,\dots,f_K$, as well as $\lambda_1,\dots,\lambda_K$, in Theorem \ref{th:e} can be chosen identical.

\begin{theorem}\label{pr:e}
  For any $F$ that is admissible within the family of homogeneous symmetric p-merging functions,
  there exists an admissible calibrator $f$ such that
  \begin{equation}\label{eq:calibrator2}
    R_\epsilon (F)
    =
    \epsilon
    \left\{\mathbf p \in [0,\infty)^K: \frac 1 K \sum_{k=1}^K f(p_k) \ge 1 \right\}
    \qquad\text{for each $\epsilon \in (0,1)$.}
  \end{equation}  
  Conversely, for any  calibrator $f$, \eqref{eq:calibrator2} determines a homogeneous symmetric p-merging function.
\end{theorem}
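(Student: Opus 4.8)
The plan is to derive Theorem~\ref{pr:e} from Theorem~\ref{th:e} by a symmetrization argument, using the fact that the arithmetic average of calibrators is again a calibrator and that averaging a p-merging function over permutations preserves the p-merging property. First I would take $F$ admissible within the class of homogeneous symmetric p-merging functions and apply Theorem~\ref{th:e} to obtain $(\lambda_1,\dots,\lambda_K)\in\Delta_K$ and admissible calibrators $f_1,\dots,f_K$ with $R_\epsilon(F)=\epsilon\{\mathbf p:\sum_k\lambda_k f_k(p_k)\ge 1\}$. The subtlety is that Theorem~\ref{th:e} only asserts admissibility of $F$ among \emph{all} homogeneous p-merging functions is what produces the representation; here we only assume admissibility within the symmetric subclass, so I would instead invoke the domination half of Theorem~\ref{th:e} (or re-run its proof) to get a homogeneous p-merging function $F'$ with the displayed rejection-region form that dominates $F$ — this $F'$ need not be symmetric.

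Next I would symmetrize. Given the p-merging function $F'$ with $R_\epsilon(F')=\epsilon\{\mathbf p:\sum_k\lambda_k f_k(p_k)\ge1\}$, I would set $f:=\frac1K\sum_{k=1}^K f_k$ and $g:=\frac1K\sum_{k=1}^K\lambda_k f_k$ and note that $g=\frac1K\sum_k\lambda_k f_k$ is a calibrator (decreasing, vanishing on $(1,\infty)$, with $\int_0^1 g=\frac1K\sum_k\lambda_k\le\frac1K\le 1$ — in fact one wants $\int_0^1 g=\frac1K$ using $\sum\lambda_k=1$, so $Kg$ integrates to $1$ and is a calibrator). Define $\widehat F$ by the rejection regions $R_\epsilon(\widehat F)=\epsilon\{\mathbf p:\sum_k K g(p_k)/K\cdot K\ge1\}$; more cleanly, let $\widehat f:=Kg=\sum_k\lambda_k f_k$, which is a calibrator since $\int_0^1\widehat f=\sum_k\lambda_k\int_0^1 f_k\le\sum_k\lambda_k=1$, and the converse part of Theorem~\ref{th:e} (with all $\lambda_k$ equal to $1/K$ and all calibrators equal to $\widehat f$) shows $R_\epsilon(\widehat F):=\epsilon\{\mathbf p:\frac1K\sum_k\widehat f(p_k)\ge1\}$ defines a homogeneous symmetric p-merging function. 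The key inequality to check is that $\widehat F$ dominates $F$, i.e. $R_\epsilon(F)\subseteq R_\epsilon(\widehat F)$: since $F$ is symmetric, $\mathbf p\in R_\epsilon(F)$ iff $\sigma\mathbf p\in R_\epsilon(F)\subseteq R_\epsilon(F')$ for \emph{every} permutation $\sigma$, so $\sum_k\lambda_k f_k((\sigma\mathbf p)_k)\ge1$ for all $\sigma$; averaging over the $K!$ permutations gives $\frac1{K!}\sum_\sigma\sum_k\lambda_k f_k(p_{\sigma(k)})\ge 1$, and the left side equals $\frac1K\sum_i\big(\sum_k\lambda_k\big)f_i(p_i)\cdot\frac1{?}$ — carefully, $\frac1{K!}\sum_\sigma\lambda_k f_k(p_{\sigma(k)})=\lambda_k\cdot\frac1K\sum_i f_k(p_i)$, so summing over $k$ yields $\frac1K\sum_i\sum_k\lambda_k f_k(p_i)=\frac1K\sum_i\widehat f(p_i)\ge1$, i.e. $\mathbf p\in R_\epsilon(\widehat F)$. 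Hence $\widehat F\le F$; by admissibility of $F$ within the symmetric class, $F=\widehat F$. Finally I would upgrade $\widehat f$ to an \emph{admissible} calibrator: replace it by its upper semicontinuous version with value $\infty$ at $0$ and mass renormalized to $1$, arguing as in the last paragraph of the proof of Theorem~\ref{th:e} that this does not shrink any rejection region (since $R_\epsilon$ is closed and $\int_0^1\widehat f\le1$ can be pushed up to $1$ without enlarging $F$ past inadmissibility). The converse statement is immediate from the converse half of Theorem~\ref{th:e} applied with equal weights $\lambda_k=1/K$ and equal calibrators.

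The main obstacle I anticipate is the symmetrization inequality step done at the level of \emph{rejection regions rather than function values}: one must be careful that $F$ symmetric does not literally say $R_\epsilon(F)$ is permutation-invariant unless $F(\sigma\mathbf p)=F(\mathbf p)$, which it is by definition of symmetry, so that part is fine; the real care is that the averaging argument shows $R_\epsilon(F)\subseteq R_\epsilon(\widehat F)$ for each $\epsilon$ separately and that this set-inclusion at all levels is exactly the statement $\widehat F\le F$ via the $\inf$-representation \eqref{eq:inf}. A secondary technical point is ensuring the resulting averaged calibrator is genuinely admissible (upper semicontinuous, $f(0)=\infty$, integral exactly $1$) rather than merely a calibrator; this requires the same closing argument as in Theorem~\ref{th:e}, invoking admissibility of $F$ to rule out that the semicontinuous/renormalized version strictly dominates. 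I do not expect the homogeneity bookkeeping (the $R_\delta=\delta\epsilon^{-1}A_\epsilon$ scaling) to cause trouble, as it is inherited verbatim from Theorem~\ref{th:e}.
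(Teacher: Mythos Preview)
Your approach is correct and is essentially the same as the paper's: both re-run the proof of Theorem~\ref{th:e} and symmetrize by averaging, then invoke admissibility within the symmetric class to conclude $F=\widehat F$. The only cosmetic difference is that the paper performs the averaging at the duality step (noting that for a symmetric $R_\epsilon$ the optimal dual functions $g_1^\epsilon,\dots,g_K^\epsilon$ can be replaced by their common average, so the dominating $F'$ is already symmetric), whereas you average over permutations of $\mathbf p$ after obtaining the possibly asymmetric representation; these are equivalent manoeuvres. One simplification: your worry about upgrading $\widehat f=\sum_k\lambda_k f_k$ to an admissible calibrator is unnecessary, since a convex combination of admissible calibrators (each upper semicontinuous, $f_k(0)=\infty$, $\int_0^1 f_k=1$) is automatically admissible.
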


\begin{proof}
  The proof is similar to that of Theorem \ref{th:e} and we only mention the differences.
  For the first statement, it suffices to notice two facts.
  First, if $R_{\epsilon}$ is symmetric, then $g^\epsilon_1,\dots,g^\epsilon_K$ in the proof of Theorem \ref{th:e} can be chosen as identical;
  for instance, one can choose the average of them (see, e.g., Proposition 2.5 of \citet{R13}).
  Second, the symmetry of $R_\epsilon (F)$ guarantees that $F'$ in the proof of Theorem \ref{th:e} is symmetric,
  and hence it is sufficient to require the admissibility of $F$  within homogeneous symmetric p-merging functions in this proposition.
  The last statement in the proposition follows from Theorem \ref{th:e}
  by noting that \eqref{eq:calibrator2} defines a symmetric rejection region.
\end{proof}

\begin{remark}
  In the converse statements of Theorems \ref{th:e} and \ref{pr:e},
  a p-merging function induced by admissible calibrators is not necessarily admissible (see Example \ref{ex:ex-th:e3}),
  although admissibility is indispensable in the proof of the forward direction.
  Using \eqref{eq:jm} and a compactness argument,
  a necessary and sufficient condition for a calibrator $f$ to induce a precise p-merging function
  (a weaker requirement than admissibility)
  via \eqref{eq:calibrator2} is
  \begin{equation}\label{eq:existp}
    \p\left( \frac 1K  \sum_{k=1}^K f(P_k) = 1\right)=1
    \quad
    \text{for some $P_1,\dots,P_K\sim \mathrm U[0,1]$}.
  \end{equation}
  Condition \eqref{eq:existp} may be difficult to check for a given $f$ in general.
  For a convex $f$, as shown by \citet[Theorem 2.4]{WW11},
  \eqref{eq:existp} holds if and only if $f \le K$ on $(0,1]$.  
  Sufficient conditions for admissibility will be studied in Section \ref{sec:admissibility} below.
  Similarly to \eqref{eq:existp},
  an equivalent condition for the p-merging function $F$ in \eqref{eq:calibrator} to be precise is 
  \begin{equation}\label{eq:existp-JM}
    \p\left(\sum_{k=1}^K \lambda_k f_k(P_k) = 1\right)=1
    \quad
    \text{for some $P_1,\dots,P_K\sim \mathrm U[0,1]$}.
  \end{equation}
  Using the terminology of \citet{WW16},
  \eqref{eq:existp-JM} means that the distributions of $\lambda_k f_k(P_k)$, $k=1,\dots,K$, are jointly mixable.
  Assuming convexity of the calibrators, \eqref{eq:existp-JM} has a similar equivalent condition \citep[Theorem 3.2]{WW16},
  and this result is essential to the proof of Theorem \ref{th:admissible} below.
\end{remark}

For a decreasing function $f:[0,\infty)\to [0,\infty]$ and a p-merging function $F$ taking values in $[0,1]$,
we say that $f$ \emph{induces} $F$ if \eqref{eq:calibrator2} holds;
similarly, we say that $\lambda_1,\dots,\lambda_K$ and $f_1,\dots,f_K$ \emph{induce} $F$ if \eqref{eq:calibrator} holds.
Theorems \ref{th:e} and \ref{pr:e} imply that admissible p-merging functions are induced by some  admissible calibrators.
Generally, the calibrator inducing a given p-merging function may not be unique.
In the following examples, p-merging functions are induced by calibrators, although these p-merging functions are not necessarily admissible.

\begin{example}\label{ex:ex-th:e}
  The p-merging function $F:= G_{k,K}$, $k\in \{1,\dots,K\}$,
  is induced by  the calibrator $(K/k) \id_{[0,k/K]}$.
\end{example}

\begin{example}\label{ex:ex-th:e3}
  In the case $K=2$,
  the p-merging function
  \[
    F:\mathbf p\mapsto
    2M_{1,K}(\mathbf p\wedge\mathbf{1})\wedge\id_{\{\min\mathbf p > 0\}}
    =
    2M_{1,K}(\mathbf p)\wedge\id_{\{\min\mathbf p > 0\}}
  \]
  is induced by the admissible calibrator $f: x\mapsto(2-2x)_+$ on $(0,\infty)$ and $f(0)=\infty$.
  The function $F$ is the zero-one adjusted version (see Proposition~\ref{prop:lsc}) of the arithmetic merging function,
  and it is dominated by the Bonferroni merging function.
  Hence, $F$ is not admissible.
\end{example}

\begin{example}\label{ex:ex-th:e2}
  One may also generate p-merging functions from \eqref{eq:calibrator2}
  where $f$ is not a calibrator. 
  For the arithmetic merging function $F:= 2M_{1,K}$,
  equality \eqref{eq:calibrator2} holds by choosing the  function $f: x\mapsto 2-2x$.
  Note that $f$ is not a calibrator and it takes negative values for $x>1$.
  For another example, we take $F:=F_{r,K}$ for $r<0$ in \eqref{eq:merge1}.
  Rewriting the equation $F(\epsilon \mathbf p)\le \epsilon$ as $b_{r,K}(\frac 1K \sum_{k=1}^K p_k^r)^{1/r}\le 1$,
  we see that
  \[
    R_\epsilon ( F) = \epsilon \left\{\mathbf p \in [0,\infty)^K:  \frac 1 K \sum_{k=1}^K b_{r,K}^{r}  p_k^{r} \ge 1 \right\},
  \]
  thus satisfying \eqref{eq:calibrator2} with $f:x\mapsto b_{r,K}^r x^r$.
  Such $f$ is generally not a calibrator (not even integrable for $r\le -1$),
  although it induces a precise p-merging function for a properly specified value of $b_{r,K}$
  in Section \ref{sec:M}.
\end{example}

The requirement $f(0)=\infty$ for an admissible calibrator $f$
implies that the combined test \eqref{eq:calibrator2} gives a rejection as soon as one of the input p-values is $0$,
which is obviously necessary for admissibility (Proposition \ref{prop:lsc}).
Although many examples in the M- and O-families, in particular $F_{r,K}$ for $r>0$ and $G_{k,K}$ for $k>1$, do not satisfy this,
we can make the zero-one adjustment \eqref{eq:zero},
which does not affect the validity of the p-merging function by Proposition \ref{prop:lsc}.
In the sequel, a calibrator will be specified by its values on $(0,1]$,
as $f=0$ on $(1,\infty)$ for any calibrator $f$,
and $f(0)$ should be clear in each specific example (in particular $f(0)=\infty$ if $f$ is admissible).
The value $f(0)$ does not affect the p-merging function determined by \eqref{eq:calibrator2}
as long as $f(0)\ge K$.

\section{Conditions for admissibility}
\label{sec:admissibility}

We have seen that p-merging functions induced by admissible calibrators via Theorems \ref{th:e} and \ref{pr:e} are not necessarily admissible
(Example~\ref{ex:ex-th:e3}).
In this section, we study sufficient conditions for admissibility based on calibrators.
First, Theorems \ref{th:e} and \ref{pr:e} lead to an immediate criterion for checking the admissibility of an induced p-merging function
(proved in Section~\ref{app:a5} of Supplemental Article).

\begin{proposition}\label{pr:g}
  Suppose that $F$ is a p-merging function taking values in $[0,1]$ and satisfying \eqref{eq:calibrator2}
  for a decreasing function $f$.
  The following statements hold:
  \begin{enumerate}[(i)]
  \item
    $F$ is admissible among symmetric p-merging functions if and only if there is no calibrator $g$ such that
    \begin{equation}\label{eq:rejrelation}
      \left\{\mathbf p \in [0,\infty)^K: \frac 1K \sum_{k=1}^K f(p_k) \ge 1 \right\}
      \subsetneq
      \left\{ \mathbf p \in [0,\infty)^K:  \frac 1K \sum_{k=1}^K g(p_k) \ge 1 \right\}.
    \end{equation}
  \item $F$ is admissible if and only if
    there are no $(\lambda_1,\dots,\lambda_K)\in\Delta_K$ and calibrators $g_1,\dots,g_K$ such that
    \begin{equation}\label{eq:rejrelation2}
      \left\{
        \mathbf{p}\in[0,\infty)^K: \frac{1}{K}\sum_{k=1}^{K} f(p_k) \ge 1
      \right\}
      \subsetneq
      \left\{
        \mathbf{p}\in[0,\infty)^K: \sum_{k=1}^{K} \lambda_k g_k(p_k) \ge 1
      \right\}.
    \end{equation}    
  \end{enumerate}
\end{proposition}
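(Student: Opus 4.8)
The plan is to derive Proposition~\ref{pr:g} directly from the representation Theorems~\ref{th:e} and~\ref{pr:e} together with the basic structural facts about admissible p-merging functions in Section~\ref{sec:2}. The key observation is that, by homogeneity, a p-merging function $F$ of the form \eqref{eq:calibrator2} is completely determined by the single set $A := \{\mathbf p \in [0,\infty)^K : \frac 1K\sum_{k=1}^K f(p_k)\ge 1\}$, since $R_\epsilon(F) = \epsilon A$ for all $\epsilon\in(0,1)$; and the partial order ``$F$ dominates $G$'' (i.e.\ $F\le G$) translates into reverse inclusion of the corresponding sets, $R_\epsilon(F)\supseteq R_\epsilon(G)$, hence $A_F\supseteq A_G$ at the level of the generating sets. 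So the whole statement will be a dictionary translation: \emph{strict} domination of $F$ by another p-merging function becomes a \emph{strict} superset relation between generating sets.

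First I would prove part (ii), which is the general (non-symmetric) case; part (i) is the symmetric specialization. For the ``only if'' direction, suppose $F$ is not admissible, so some p-merging function $G$ strictly dominates $F$. By Proposition~\ref{prop:dominated} we may take $G$ to be admissible (a p-merging function strictly dominating $F$ exists, and it in turn is dominated by an admissible one, which then also strictly dominates $F$); since $F$ itself is homogeneous — a point that needs a line of justification, because \eqref{eq:calibrator2} with $f$ merely decreasing still yields homogeneity of $F$ directly from $R_\epsilon(F)=\epsilon A$ — and since any admissible p-merging function is homogeneous only if\ldots actually here is a subtlety: admissible p-merging functions need not a priori be homogeneous, so instead I would invoke Theorem~\ref{th:e} \emph{only} after noting that $G$ can be taken homogeneous. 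The cleaner route: because $F$ is homogeneous and $R_\epsilon(F)=\epsilon A$, any p-merging function $G$ with $G\le F$ can be replaced by its homogenization $G^{\mathrm h}$ defined via $R_\epsilon(G^{\mathrm h}) := \bigcup_{\delta\in(0,1)}\frac{\epsilon}{\delta}R_\delta(G)$, which is still a p-merging function (one checks $Q(\mathbf P\in \frac\epsilon\delta R_\delta(G))\le \epsilon$ using that $\frac\delta\epsilon\mathbf P$ is again a vector of p-variables when $\epsilon\le\delta$, and a limiting/monotonicity argument otherwise), still satisfies $G^{\mathrm h}\le G\le F$, and still strictly dominates $F$ somewhere. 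Then apply Proposition~\ref{prop:dominated}'s homogeneous analogue (Remark~\ref{rem:symmetry}) to get an \emph{admissible homogeneous} strict dominator $G$, feed it into Theorem~\ref{th:e} to obtain $(\lambda_1,\dots,\lambda_K)\in\Delta_K$ and admissible calibrators $g_1,\dots,g_K$ with $R_\epsilon(G)=\epsilon\{\mathbf p:\sum\lambda_k g_k(p_k)\ge 1\}$, and translate $F\le G$, $F\ne G$ into the strict inclusion \eqref{eq:rejrelation2}. For the ``if'' direction, given $(\lambda_k)$ and calibrators $g_k$ realizing \eqref{eq:rejrelation2}, Theorem~\ref{th:e}'s converse says the right-hand set generates a homogeneous p-merging function $G$, which by construction satisfies $R_\epsilon(G)\supsetneq R_\epsilon(F)$ for every $\epsilon$, i.e.\ $G\le F$ with strict inequality somewhere — so $F$ is inadmissible. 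Part (i) is identical with Theorem~\ref{pr:e} replacing Theorem~\ref{th:e} and ``symmetric'' inserted throughout: one must check that the homogenization and the dominating-by-admissible steps preserve symmetry, which they do because all the constructions are permutation-equivariant.

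The main obstacle I anticipate is the homogeneity bookkeeping: Theorems~\ref{th:e} and~\ref{pr:e} are stated for p-merging functions that are \emph{admissible among homogeneous} (resp.\ homogeneous symmetric) ones, whereas a generic strict dominator of $F$ need be neither admissible nor homogeneous. Bridging this gap requires the homogenization construction sketched above and an appeal to the homogeneous version of Proposition~\ref{prop:dominated} promised in Remark~\ref{rem:symmetry}; one must verify that homogenizing does not destroy the strictness of the domination (it does not, because $R_\epsilon(G^{\mathrm h})\supseteq R_\epsilon(G)$ for all $\epsilon$, so if $G<F$ somewhere then $G^{\mathrm h}\le G< F$ at that point too). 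A secondary, more routine point is that $f$ in the hypothesis is only assumed decreasing — not a calibrator — so one cannot directly say $F$ is ``induced by an admissible calibrator''; but this does not matter, since the argument only uses that $F$ is homogeneous with $R_\epsilon(F)=\epsilon A$ for the fixed set $A$ appearing on the left of \eqref{eq:rejrelation}/\eqref{eq:rejrelation2}, which holds by hypothesis. Everything else is the straightforward set-theoretic translation described above.
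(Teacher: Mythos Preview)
Your overall route matches the paper's: both directions rest on Theorems~\ref{th:e}/\ref{pr:e} together with Proposition~\ref{prop:dominated} and Remark~\ref{rem:symmetry}, and the paper is in fact terser --- it obtains an admissible symmetric dominator $G\le F$ via Remark~\ref{rem:symmetry}, asserts without argument that ``$G$ can be safely chosen as homogeneous,'' and then feeds $G$ into Theorem~\ref{pr:e}. You go further by flagging homogeneity as the one delicate point, which is exactly right.

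Your proposed resolution of that point, however, has a gap. The homogenization
\[
R_\epsilon(G^{\mathrm h}):=\bigcup_{\delta\in(0,1)}\tfrac{\epsilon}{\delta}\,R_\delta(G)
\]
is not shown to be a p-merging function by the verification you sketch. For $\epsilon\le\delta$, the fact that $\tfrac{\delta}{\epsilon}\mathbf P\in\PPP_Q^K$ yields only
\[
Q\!\left(\mathbf P\in\tfrac{\epsilon}{\delta}R_\delta(G)\right)
=Q\!\left(\tfrac{\delta}{\epsilon}\mathbf P\in R_\delta(G)\right)\le\delta,
\]
not $\le\epsilon$; taking the union over all $\delta\in(0,1)$ of sets each of $Q$-mass up to $\delta$ gives no control at level~$\epsilon$. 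For $\epsilon>\delta$ the scaled vector $\tfrac{\delta}{\epsilon}\mathbf P$ need not consist of p-variables at all, so the ``limiting/monotonicity argument otherwise'' is not available either. The cleaner path --- which both you and the paper point toward --- is to skip the homogenization of a generic dominator entirely and invoke Remark~\ref{rem:symmetry} in its ``symmetric and homogeneous'' (respectively ``homogeneous'') form directly on $F$, which is already homogeneous by~\eqref{eq:calibrator2}; this produces a $G$ that is homogeneous by construction and to which Theorem~\ref{pr:e} (respectively Theorem~\ref{th:e}) applies immediately.
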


Note that \eqref{eq:rejrelation} does not imply $g\ge f$, making the existence of $g$ often complicated to analyze.
Proposition \ref{pr:g} implies, in particular, that for any calibrator $f$, $f\le K$ on $(0,1]$ is a necessary condition for the induced p-merging function to be admissible,
because otherwise the function $g:x\mapsto f(cx)\wedge K$ where $c:=\int_0^1 f(x)\wedge K\d x<1$ would induce a p-merging function strictly dominating $F$.
On the other hand, if $f(1)>0$, then the calibrator $g:=(f-f(1))/(1-f(1))\id_{[0,1]}$
induces the same p-merging function $F$.
Hence, it suffices to consider $f$ with $f\le K$ on $(0,1]$ and $f(1)=0$.

The main result of this section gives a sufficient condition for the admissibility of the corresponding p-merging function.
For a calibrator $f$, we define another calibrator $g:[0,\infty)\to [0,\infty]$, for some $\eta\in [0,1/K]$, via
\begin{equation}\label{eq:def-g}
  g: x\mapsto f\left(\frac{x-\eta}{1-K\eta}\right)\id_{\{x\in (\eta,1-(K-1)\eta]\}} + K\id_{\{x\in [0,\eta]\}}.
\end{equation}
It is straightforward to verify $\int_0^1 g(x)\d x \le 1$, and $g$ defined via \eqref{eq:def-g} is a calibrator. 

\begin{theorem}\label{th:admissible}
  Suppose that an admissible calibrator $f$ is strictly convex or strictly concave on $(0,1]$, $f(0+)\in (K/(K-1),K]$, and $f(1)=0$.
  The p-merging function induced by $f$, or $g$ in \eqref{eq:def-g} for any $\eta\in [0,1/K]$, is admissible.
\end{theorem}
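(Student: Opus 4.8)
\textbf{Proof proposal for Theorem \ref{th:admissible}.}

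The plan is to invoke the admissibility criterion of Proposition \ref{pr:g}(ii): it suffices to show that there are no $(\lambda_1,\dots,\lambda_K)\in\Delta_K$ and calibrators $g_1,\dots,g_K$ for which the strict inclusion \eqref{eq:rejrelation2} holds with $f$ (or its translate/truncate $g$). Suppose for contradiction that such a dominating tuple exists. First I would observe that $f$ itself induces a \emph{precise} p-merging function. This is the step where convexity enters: by \citet[Theorem 2.4]{WW11}, a convex calibrator $f$ with $f\le K$ on $(0,1]$ satisfies the joint-mixability condition \eqref{eq:existp}, so $\P(\frac1K\sum_k f(P_k)=1)=1$ for some $P_1,\dots,P_K\sim\mathrm U[0,1]$; for strictly concave $f$ the relevant variant of \eqref{eq:existp-JM} in \citet[Theorem 3.2]{WW16} applies. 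Preciseness forces the dominating region to attain probability exactly $\epsilon$ as well, which is the lever that will produce the contradiction.

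The core argument is a pointwise comparison on the boundary. Because $f$ induces a precise p-merging function, there is a probability measure $\mu$ with uniform marginals concentrated on the boundary hypersurface $\{\sum_k f(p_k)=K\}$ (after rescaling). If $\{\frac1K\sum_k f(p_k)\ge1\}\subsetneq\{\sum_k\lambda_k g_k(p_k)\ge1\}$ strictly, then in particular every boundary point $\mathbf p$ of the $f$-region satisfies $\sum_k\lambda_k g_k(p_k)\ge1$; applying this under $\mu$ and using $\sum_k\lambda_k\E[g_k(P_k)]\le\sum_k\lambda_k=1$ forces $\sum_k\lambda_k g_k(p_k)=1$ $\mu$-a.s.\ on that hypersurface. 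So on the support of $\mu$ the two functions $\frac1K\sum f(p_k)$ and $\sum\lambda_k g_k(p_k)$ agree. Now I would exploit the strict convexity (or concavity) of $f$ together with the geometry of the joint-mixability support: the support of $\mu$ is rich enough (it contains, for each coordinate value $t\in[0,1]$, many completions to a boundary point) that agreement of the two additive functions along it, combined with the strict curvature of $f$, forces $\lambda_k g_k = f$ up to the obvious normalization for each $k$ with $\lambda_k>0$ — essentially a rigidity lemma for additive functions that coincide on a jointly-mixable support. This contradicts strictness of the inclusion. The translated/truncated calibrator $g$ from \eqref{eq:def-g} is handled by the same argument after the affine change of variables $x\mapsto(x-\eta)/(1-K\eta)$, which maps the $g$-region to the $f$-region on $(\eta,1-(K-1)\eta]$ and leaves the flat cap $K\id_{[0,\eta]}$ contributing a ``free rejection as soon as some $p_k\le\epsilon\eta$'' that any dominating region must also contain.

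I expect the main obstacle to be the rigidity step: proving that if two functions of the form $\frac1K\sum_k f(p_k)$ and $\sum_k\lambda_k g_k(p_k)$ coincide on the (generally lower-dimensional, possibly disconnected) jointly-mixable support of $\mu$, then the summands must match. The strict convexity/concavity of $f$ is precisely what rules out the pathological coincidences — it pins down $f$ from its values on a sufficiently spread-out set — but making this quantitative requires a careful description of which points lie on the support, which is where the ``advanced results on joint mixability in \citet{WW11,WW16}'' alluded to in the introduction do the heavy lifting. A secondary subtlety is the boundary value $f(0+)>K/(K-1)$: this is exactly the threshold ensuring the mixable support does not degenerate (e.g.\ does not collapse onto configurations with one coordinate pinned at a single value), so I would need to verify that this hypothesis guarantees enough spread in the support for the rigidity lemma to bite.
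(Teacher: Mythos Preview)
Your approach is genuinely different from the paper's, and the ``rigidity step'' you flag as the main obstacle is a real gap that the cited references do not close. From $\sum_k\lambda_k g_k(P_k)=1$ $\mu$-a.s.\ on a joint-mixing coupling you want to force $\lambda_k g_k=f/K$; but the results in \citet{WW11,WW16} give \emph{existence} of joint mixes under monotone-density conditions, not structural information about their supports sufficient to separate the summands of an additive function. Two distinct additive decompositions can coincide on a lower-dimensional set, and nothing in your sketch rules this out. Your proposed role for the hypothesis $f(0+)>K/(K-1)$ (``the support does not degenerate'') is correspondingly vague.

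The paper argues constructively and does not go through Proposition~\ref{pr:g}. Given any strictly dominating p-merging function $G$ and a witness point $\mathbf p$ with $G(\mathbf p)<\alpha<F(\mathbf p)$, it \emph{builds} $\mathbf P\in\mathcal P_Q^K$ with $Q(\mathbf P\in R_\alpha(G))>\alpha$. Write $y_k:=f(p_k/\alpha)$, so $\sum_k y_k<K$, and let $h$ be the density of $f(U)$; strict convexity of $f$ makes $h$ \emph{strictly} decreasing. From $h$ one excises a tiny piece supported in a short interval above each $y_k$, producing densities $h_k$ that are still decreasing and whose means now sum exactly to $K$; Theorem~3.2 of \citet{WW16} then yields a coupling $(X_1,\dots,X_K)$ with $X_k\sim h_k$ and $\sum_k X_k=K$. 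On an event $A$ of probability slightly below $\alpha$ one sets $P_k=\alpha f^{-1}(X_k)$ (so $\mathbf P$ lies on the boundary of $R_\alpha(F)\subseteq R_\alpha(G)$); on a small event $B$ one sets $\mathbf P=\mathbf p$ (in $R_\alpha(G)\setminus R_\alpha(F)$, supplying the extra mass); and on further small events $B_j$ one places $K-1$ coordinates at $\theta\alpha$ for tiny $\theta$ --- here $f(0+)>K/(K-1)$ is used precisely to guarantee $(K-1)f(\theta)>K$, so these configurations also lie in $R_\alpha(F)$ while repairing the marginals to valid p-variables. The bookkeeping gives $Q(\mathbf P\in R_\alpha(G))=\alpha+\theta\alpha/(K-1)>\alpha$, a contradiction. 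The extension to $g$ in \eqref{eq:def-g} is handled by a separate lemma (Lemma~\ref{lem:f-trans}) rather than the change-of-variables heuristic you sketch, though the underlying idea is related.
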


\begin{proof}
  We will prove the statement on $f$, and the statement on $g$ would then follow from Lemma \ref{lem:f-trans}
  in Supplemental Article, Section~\ref{app:a5},
  which says that if $f$ induces an admissible p-merging function, then so does $g$ in \eqref{eq:def-g}.
  We only show the case where $f$ is strictly convex, as the case of a strictly concave $f$ follows from a symmetric argument;
  we remark that $f(0+)\le K$  for a convex $f$ and $f(0+)> K/(K-1)$ for a concave $f$ play the same role in the proof.

  Suppose for the purpose of contradiction that there exists a p-merging function $G$ which strictly dominates $F$,
  that is, there exist $\mathbf{p} = (p_1, \dots,p_K) \in [0,1]^K$ and $\alpha \in (0,1)$ such that $G(\mathbf{p}) < \alpha < F(\mathbf{p}) < 1$. 
  Set $a := \lim_{t\downarrow 0} f(t) \le K$.
  Clearly, $a>2$ since no strictly convex function on $[0,1]$ bounded by $2$ integrates to $1$.
  Hence, it suffices to assume $K\ge 3$.

  Note that $f$ is continuous  and strictly decreasing on $(0,1)$.
  Let $f^{-1}:(0,a)\mapsto(0,1)$ be the inverse function of $f$, which is strictly decreasing and strictly convex.
  Let $U$ be a uniform random variable on $[0,1]$,
  and let $h$ be the density function $f(U)$. Note that $h$ is a strictly decreasing density function.
  Since  $ \mathbf{p} \notin R_\alpha (F)$, we have $ \sum_{k=1}^{K} f(p_k / \alpha) < K$.
  Denote by $y_k := f(p_k / \alpha)$, $k =1,\dots,K$.
  Note that $y_1+\cdots +y_K < K $ and  $y_k < a$ for each $k$.
  Take a small constant
  \[\epsilon: = \frac{1}{4} \min\left\{\bigwedge_{k=1}^K ( a - y_k ),~ a-2 ,~ 1-\frac1{K}{\sum_{k=1}^K y_k}\right\}>0.\]
  For each $k=1,\dots,K$, $h$ is strictly decreasing in $[y_k + \epsilon, y_k + 2\epsilon]$ since $y_k + 2\epsilon \le a-2\epsilon$.
  Define another density function
  $v_k := (h - h(y_k + 2\epsilon)) \id_{[y_k + \epsilon, y_k + 2\epsilon]}$
  with its mass
  $m_k := \int_{y_k + \epsilon}^{y_k + 2\epsilon} v_k(t)\d t > 0$ and its mean $\mu(v_k)$ smaller than $y_k + 2\epsilon$. 

  Write $\beta: = 1 - \frac{1}{K} (\mu(v_1) + \cdots + \mu(v_K))$.
  Since $ \mu(v_1) + \cdots + \mu(v_K) < y_1 + \cdots + y_K + 2K \epsilon < K$, we have $\beta>0$. 
  Take another small constant
  \[
    \theta
    :=
    \min
    \left\{
      \bigwedge_{k=1}^K 
      \frac{m_k \beta}{a-1},~ f^{-1}(a - \epsilon),~\frac{(1-\alpha)(K-1)}{\alpha}
    \right\}
    >
    0,
  \]
  and let
  \[
    m^* := \frac{ \int_{ 0}^{ \theta} f(t) \dd t  - \theta  }{\beta } \le \frac{ (a-1)\theta}{\beta} \le \bigwedge_{k=1}^K m_k.
  \]
  We have
  $\int_{\theta}^{1} f(t)\d t = 1 - \int_{0}^{\theta} f(t) \d t = 1 - \theta - m^* \beta$.
  Note that $a > f(\theta) \ge a - \epsilon > \bigvee_{k=1}^K y_k + 2\epsilon$.
  For $k=1,\dots,K$, define a probability density function
  \begin{equation}\label{eq:h-cond}
    h_k = \frac{1}{1- \theta - m^*}\left(h \id_{(0,f(\theta)]} - m^* \frac{v_k}{m_k}\right),
  \end{equation}
  which is supported in interval $(0,f(\theta)]$, and its mean $\mu(h_k)$ satisfies
  \[
    \mu(h_k)
    =
    \frac{\int_{\theta}^{1} f(t)\d t - m^* \mu(v_k)}{1 - \theta - m^*}
    =
    \frac{1 - \theta - m^*\beta- m^*\mu(v_k)}{1 - \theta - m^*}.
  \]
  We have
  \[
    \sum_{k=1}^K \mu(h_k)
    =
    \frac{K (1 - \theta - m^* \beta) - m^* \sum_{k=1}^K\mu(v_k)}{1 - \theta - m^*}
    =
    K > f(\theta).
  \]
  Note that each of $h_1,\dots,h_K $ has a decreasing density in $(0,f(\theta)]$,
  and the sum of their means is larger than $f(\theta)$,
  thus satisfying the condition of joint mixability in \citet[Theorem 3.2]{WW16}.
  Using that theorem,
  there exists a random vector $\mathbf{X} = (X_1,\dots,X_K)$ satisfying $X_k\sim h_k$, $k=1,\dots,K$, and $X_1 + \cdots + X_K = K$. 

  Take disjoint events $A,B,C, B_1,\dots,B_K$ independent of $\mathbf{X}$ such that
  $\p(A) = (1 - \theta - m^*) \alpha$, $\p(B) = m^*\alpha$,
  $\p(C) = 1 - \alpha - {\theta \alpha}/(K-1)$
  and $\p(B_1)= \cdots = \p(B_K) = \theta \alpha /(K-1)$.
  Design a random vector $\mathbf P=(P_1,\dots,P_K)$ by letting, for $k= 1,\dots,K$,
  \begin{equation}\label{eq:convex-constr}
    P_k = \alpha f^{-1} (X_k) \id_{A} + p_k  \id_{B} +  \sum_{j = 1, j\neq k }^K \theta \alpha\id_{B_j} + \id_{B_k} + \id_{C}.
  \end{equation}
  The decomposition \eqref{eq:h-cond} gives, for each $k=1,\dots,K$, that
  \[
    \frac{\p( f^{-1} (X_k)    \id_{A}  + f^{-1} (y_k)\id_B > x)}{(1-\theta)\alpha} \ge \frac{1-x}{1-\theta}
    \qquad\text{for all $x\in (\theta,1)$,}
  \]
  and thus the conditional distribution of $f^{-1} (X_k) \id_{A} + f^{-1} (y_k)\id_B$ on $A\cup B$
  is stochastically larger than the $\U[\theta,1]$.
  As a consequence,
  the distribution of $P_k$ is stochastically larger than
  $\theta\alpha\delta_{\theta\alpha} + (1 - \theta )\alpha \U[ \theta\alpha,\alpha] + (1 - \alpha) \delta_{1}$,
  and hence $P_k$ is a p-variable.

  If $A$ happens, then $ f(P_k/\alpha)= X_k$ for each $k$,  and $\sum_{k=1}^K f(P_k/\alpha) = \sum_{k=1}^K X_k = K$. 
  If any of $B_k$ happens, 
  then $\sum_{k=1}^{K} f(P_k/\alpha)= (K-1)f(\theta) > (K-1)(a-\epsilon)> K$.
  In both cases, using \eqref{eq:calibrator2}, $\mathbf{P}\in R_{\alpha}(F) \subseteq R_{\alpha}(G)$. 
  If $B$ happens, then $\mathbf{P} = \mathbf{p} \in  R_{\alpha}(G) $. 
  Therefore,
  \begin{equation}\label{eq:admis-con}
    \p(\mathbf P \in R_{\alpha}(G)) \ge \p(A) + \p(B)  + \sum_{k=1}^K \p(B_k)  = \alpha + \frac{\theta \alpha}{K-1} > \alpha,
  \end{equation}
  a contradiction to $G $ being a p-merging function.
  This shows that $F $ is admissible. 
\end{proof}

Rephrasing the condition on $g$ in Theorem \ref{th:admissible},
we get a sufficient condition on an admissible calibrator $f$ to ensure that the induced p-merging function is admissible:   
\begin{equation}\label{eq:f-cond}
  \begin{aligned}
    &\mbox{For some $\eta\in[0,\frac1K)$ and $\tau:=1-(K-1)\eta$:
      $f=K$ on $(0,\eta]$, $f(\eta+)\in(\frac{K}{K-1},K]$,}\\[-2mm]
    &\mbox{$f$ is strictly convex or strictly concave on $(\eta,\tau]$, and $f(1)=0$.}
  \end{aligned}
\end{equation}
Notice that the condition $f(\eta+)\in(\frac{K}{K-1},K]$ in \eqref{eq:f-cond} and Theorem~\ref{th:admissible}
excludes the simple case $K=2$ (treated in Supplemental Article, Section~\ref{app:K2}).
One may try to relax the requirement that convexity or concavity be strict;
we explain technical difficulties in Remark \ref{rem:strict-con} in Supplemental Article, Section~\ref{app:a8}, for the interested reader.

\begin{algorithm}[bt]
  \caption{The p-merging function induced by a calibrator $f$ to accuracy $2^{-M}$}
  \label{alg:generic}
  \begin{algorithmic}
    \Require
      A calibrator $f$, $M\in\N$, and a sequence of p-values $p_1,\dots,p_K$.
    \State $L:=0$ and $R:=1$
    \For{$m=1,\dots,M$}
      \State $\epsilon:=(L+R)/2$
      \If{$\frac{1}{K}\sum_{k=1}^K f(p_k/\epsilon)\ge1$}
        $R:=\epsilon$ \textbf{else} $L:=\epsilon$
      \EndIf
    \EndFor
    \State\Return $R$
  \end{algorithmic}
\end{algorithm}

A natural way to compute the p-merging function induced by a calibrator $f$ to accuracy $2^{-M}$,
where $M$ is a natural number, is to use binary search,
which is given as Algorithm~\ref{alg:generic}.
The value of this merging function is given by $\phi_{\mathbf p}^{-1}(1)\wedge 1$,
where $\phi_{\mathbf p}^{-1}$ is the left-inverse of
\[
  \phi_{\mathbf p}:
  \epsilon\mapsto\frac1K\sum_{k=1}^K f(p_k/\epsilon),
\]
and the algorithm essentially solves the equation $\phi_{\mathbf p}(\epsilon)=1$.
Assuming that the calibrator $f$ is computable in time $O(1)$,
merging $K$ p-values by Algorithm~\ref{alg:generic} takes time $O(M K)$.
Notice that Algorithm~\ref{alg:generic} always produces a valid p-value
(which exceeds the p-value produced by the p-merging function induced by $f$ by at most $2^{-M}$).

In the following few sections, we analyze admissibility of the Hommel function, members of the O-family, and members of the M-family.
In cases of non-admissibility, we construct a dominating admissible p-merging function.
It turns out that, except for the Bonferroni p-merging function,
none of these p-merging functions has a calibrator satisfying the condition \eqref{eq:f-cond},
and many of them can indeed be improved, either trivially or significantly.
Theorem \ref{th:admissible} becomes very useful in the construction of admissible p-merging functions dominating the ones in the M-family.

\section{Hommel's function and the O-family}
\label{sec:o}

This section is dedicated to the admissibility of the Hommel function $H_K$ and the O-family of p-merging functions $(G_{k,K})_{k=1,\dots,K}$ for a given $K$.
The calibrators we see below are generally not continuous, and hence they do not satisfy the condition in Theorem \ref{th:admissible}.
Nevertheless, some alternative arguments will justify the (in-)admissibility of the induced functions.
The key result of this section is Theorem~\ref{th:o1} about the grid harmonic p-merging function.

\subsection{Grid harmonic merging function}

\begin{figure}[tb]
\begin{center}
  \includegraphics[width=0.75\textwidth, trim={0 0cm 0 0cm}, clip]{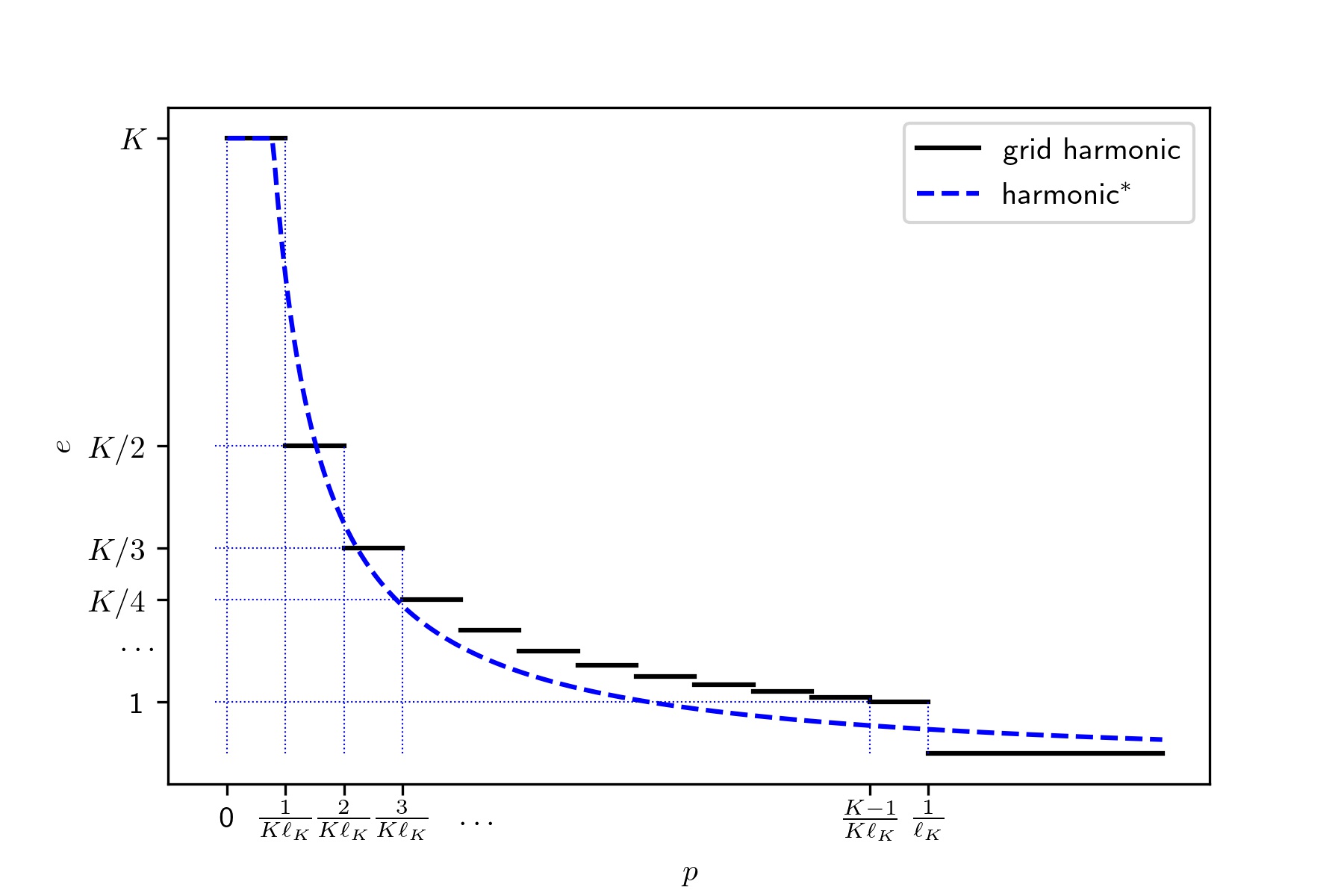}
\end{center} 
\caption{The grid harmonic calibrator (solid and black)
  and the harmonic${}^*$ calibrator (dashed and blue), for $K:=12$}\label{fig:Hommel}
\end{figure}

We first show that the Hommel function $H_K\wedge 1$ is not admissible,
and it can be strictly improved to an admissible p-merging function $H_K^*$.
Recall that $H_K$ is given by $H_K := \ell_K \bigwedge_{k=1}^K G_{k,K}$,
where $\ell_K := \sum_{k=1}^K \frac1k$.
Our modification $H^*_K$ of the Hommel function will be induced by the function $f:[0,\infty)\to[0,\infty)$ defined by
\begin{equation}\label{eq:Hkf}
  f: x \mapsto \frac{K\id_{\{\ell_K x\le1\}}}{\lceil K\ell_K x\rceil},
\end{equation}
which we call the \emph{grid harmonic calibrator} and whose graph is shown in Figure~\ref{fig:Hommel}
as the black piece-wise horizontal line.
It is straightforward to check that $f$ is decreasing, $f(1)=0$, and $\int_0^1 f(x) \d x = 1$,
and hence $f$ is indeed a calibrator.
We will also refer to $H^*_K$ as the \emph{grid harmonic p-merging function}.

\begin{theorem}\label{th:o1}
  The p-merging function $H_K\wedge 1$ is dominated (strictly if $K\ge 4$)
  by the grid harmonic p-merging function $H_K^*$.
  Moreover, $H_K^*$ is always admissible among symmetric p-merging functions, and it is admissible if $K$ is not a prime number.
\end{theorem}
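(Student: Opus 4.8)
The plan is to establish the three assertions in order of increasing difficulty. First, to see that $H_K^*$ dominates $H_K\wedge 1$, I would compare the two rejection regions using the calibrator representation \eqref{eq:calibrator2}. The grid harmonic calibrator $f$ in \eqref{eq:Hkf} is a piecewise-constant function that sits (weakly) above the step function $g(x) = (K/k)\id_{[0,k/K]}$ only at the right values; concretely, $\frac1K\sum_k f(p_k) \ge 1$ is the acceptance-of-rejection condition for $H_K^*$, while $H_K\wedge1 \le \epsilon$ translates, after scaling by $\epsilon$ and using $H_K = \ell_K \bigwedge_k G_{k,K}$, into the condition that $p_{(k)} \le k\epsilon/(K\ell_K)$ for some $k$. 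I would check pointwise that whenever the latter holds the former holds, so $R_\epsilon(H_K\wedge1)\subseteq R_\epsilon(H_K^*)$, i.e.\ $H_K^* \le H_K\wedge1$. For strictness when $K\ge4$, I would exhibit an explicit $\mathbf p$ lying in $R_\epsilon(H_K^*)\setminus R_\epsilon(H_K\wedge1)$ — a vector whose order statistics sit strictly between the grid levels $k/(K\ell_K)$ so that the smooth accumulation of the $f(p_k)$ reaches $1$ while no single order-statistic test fires.

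Second, for admissibility among symmetric p-merging functions, I would invoke Proposition \ref{pr:g}(i): it suffices to show there is no calibrator $g$ with $\{\frac1K\sum_k f(p_k)\ge1\}\subsetneq\{\frac1K\sum_k g(p_k)\ge1\}$. Since $f$ is piecewise constant with jumps at the grid points $x = k/(K\ell_K)$, a strictly larger rejection region would force $g$ to exceed $f$ somewhere on a set of positive measure relative to the "boundary" configurations of $f$; then $\int_0^1 g > \int_0^1 f = 1$ unless $g < f$ elsewhere. The argument is to use a precise dependence construction — placing the $P_k$ at the grid points with carefully chosen probabilities — analogous to the one in the proof of Theorem \ref{thm:Simes}, to show that any such enlargement of the rejection region would make $Q(\mathbf P\in R_\alpha) > \alpha$ for some p-variables, contradicting validity. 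So the symmetric-admissibility part reduces to checking that the grid-point atoms of $f$ are "used up" exactly.

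Third, and this is where the number-theoretic hypothesis enters, for full admissibility I would use Proposition \ref{pr:g}(ii): rule out $(\lambda_1,\dots,\lambda_K)\in\Delta_K$ and calibrators $g_1,\dots,g_K$ with $\{\frac1K\sum_k f(p_k)\ge1\}\subsetneq\{\sum_k\lambda_k g_k(p_k)\ge1\}$. The new freedom is that the $g_k$ and weights need not be symmetric, so I must show a non-symmetric perturbation cannot help — and this is exactly where compositeness of $K$ is used. I expect the mechanism to be: the boundary of $R_\epsilon(H_K^*)$ is tiled by configurations in which the multiset of values $\lceil K\ell_K p_k\rceil$ is a composition of $K$ (they sum to $K$ because $\sum f(p_k) = K$ there); when $K$ is composite there are enough "balanced" such compositions — e.g.\ using a nontrivial divisor $d\mid K$, the configuration where all $p_k$ equal the same grid level — to pin down, via a validity/duality argument as in Theorem \ref{th:e}, that all $\lambda_k$ must be equal and all $g_k = f$, reducing to the symmetric case already handled. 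When $K$ is prime, such balanced configurations are scarcer, leaving room to trade off unequal weights against unequal calibrators; the paper evidently exploits exactly this gap to show inadmissibility for prime $K$.

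The main obstacle is this last step: proving that for composite $K$ the extremal (boundary) configurations of the grid harmonic rejection region are rich enough to force symmetry of any dominating $(\lambda_k, g_k)$. Concretely, the hard part will be organizing the combinatorics of compositions of $K$ into parts from $\{1,\dots,K\}$ — identifying, for each grid cell of the boundary, a family of p-variable distributions supported on that cell's vertices for which $\sum_k\lambda_k g_k(P_k) \ge 1$ almost surely, and then arguing that simultaneously satisfying the Markov/validity constraint $\sum_k\lambda_k\int_0^1 g_k \le 1$ across all these families forces $\lambda_k g_k \equiv \lambda_k f$. Once symmetry is forced, Proposition \ref{pr:g}(i) from the second step closes the argument.
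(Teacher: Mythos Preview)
Your approach is the paper's: domination via rejection-region containment, symmetric admissibility via Proposition~\ref{pr:g}(i), full admissibility via Proposition~\ref{pr:g}(ii). Two points need sharpening.

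For symmetric admissibility, no dependence construction of p-variables is needed; the argument stays at the calibrator level. For each $x\in(0,1/\ell_K]$, set $m:=\lceil K\ell_K x\rceil$ and take $\mathbf p$ with $m$ coordinates equal to $x$ and the rest $>1$. Then $\sum_k f(p_k)=m\cdot(K/m)=K$, so the containment forces $mg(x)\ge K$, i.e.\ $g\ge f$ on all of $(0,1/\ell_K]$. Since $\int_0^{1/\ell_K}f=1$ and $\int_0^1 g\le1$, this gives $g=f$ a.e., contradicting strict containment.

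For composite $K$, your example ``all $p_k$ equal the same grid level'' does not work: that is only the single configuration with all coordinates at $K\tau$, which gives one equation and cannot separate the individual $y_{K,k}:=\lambda_k g_k(K\tau)$. The paper first uses, for each $m\le K$ and each $m$-subset $J$, the configuration with coordinates in $J$ at level $m\tau$ and the rest $>1$; averaging over $J$ gives $T_m:=\sum_k y_{m,k}\ge K/m$, and summing over $m$ against the integral constraint forces $T_m=K/m$ and $y_{m,k}=1/m$ for all $m\le K-1$. The level $m=K$ is the obstruction. Here compositeness enters via a \emph{mixed-level} construction: write $K=k_1k_2$ with $k_1,k_2\ge2$ and take $k_1$ coordinates at $K\tau$ and $k_2-1$ coordinates at $k_2\tau$ (on disjoint index sets), which lies on the boundary since $k_1\cdot1+(k_2-1)\cdot K/k_2=K$. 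Since the $y_{k_2,k}$ are already pinned to $1/k_2$, varying the $k_1$-subset forces $y_{K,k}=1/K$ for every $k$, and then $\lambda_k=1/K$, $g_k=f$ a.e.\ follow.
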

\begin{proof} 
  Since $f$ induces $H^*_K$, by Theorem \ref{pr:e}, $H^*_K$ is a p-merging function.

Let us verify that $H_K\ge H^*_K$.
The rejection region  of $H^*_K$ satisfies
\begin{equation}\label{eq:hprime2}
  R_\epsilon(H^*_K)
  =
  \left\{
    \mathbf p \in [0,\infty)^K:
    \sum_{k=1}^K \frac{\id_{\{\ell_K p_k \le \epsilon\}}}{\lceil K\ell_K p_k/\epsilon \rceil} \ge 1\right\}.
\end{equation}   
For any $\mathbf p\in [0,\infty)^K$ and $\epsilon>0$, 
if $H_K(\mathbf p)\le \epsilon $, then there exists $m=1,\dots,K$
such that $\#\{k : K\ell_K  p_{k} / m \le \epsilon\} \ge m$.
It follows that
\[
  \sum_{k=1}^K
  \frac{\id_{\{\ell_K p_k \le \epsilon\}}}{\lceil K\ell_K p_k/\epsilon \rceil}
  \ge
  \sum_{k=1}^K
  \frac 1m \id_{\{ K\ell_Kp_k/\epsilon\le m\}}
  =
  \frac{1}{m} \#\{k : K\ell_K  p_{k} / m \le \epsilon\}
  \ge
  1.
\]
By \eqref{eq:hprime2}, $\mathbf p\in R_\epsilon(H^*_K)$, and thus $H^*_K(\mathbf p)\le \epsilon$.
This shows  $H_K\ge H^*_K$.
It is easy to check that the reverse direction holds (i.e., $H_K=H_K^*$) if and only if $K\le 3$.

Next, we prove the admissibility of $H_K^*$.
Set $\tau := 1/({K \ell_K})$.
Using Proposition \ref{pr:g}, suppose, for the purpose of contradiction,
that there exists a calibrator $g$ satisfying \eqref{eq:rejrelation}.
For $x\in (0,K\tau]$, set  $p_1 =\dots = p_m = x $ and $ p_{m+1} =\dots = p_K >1 $, where $m:= \lceil \tau x\rceil$.
Since  $f(x) = K/m$, we have $ \sum_{k=1}^{K} f(p_k) = K$.
Using \eqref{eq:rejrelation},
$ K \le \sum_{k=1}^{K} g(p_k) = m g(x)$, and thus $g(x) \ge K/m = f(x)$.

Since $x\in (0,K\tau]$ is arbitrary, we have $\int_0^{K\tau} g(x) \dd x \ge \int_0^{K\tau} f(x) \dd x = 1$.
As $g$ is a calibrator, this means  $g = f$ almost everywhere on $[0,1]$.
Moreover, $f$ is left-continuous, which further implies $g\le f$.
Hence, both sides of \eqref{eq:rejrelation} coincide, leading to a contradiction.
Thus, $H_K^*$ is admissible among symmetric p-merging functions.

Finally, we show that $H_K^*$ is admissible if $K$ is not a prime number.
Suppose that there exist $(\lambda_1,\dots,\lambda_K) \in \Delta_K $ and calibrators $g_1,\dots,g_K$ satisfying \eqref{eq:rejrelation2}.
For each $m,k =1,\dots,K$, set $y_{m,k} := \lambda_k g_k(m\tau)$ and $T_m := \sum_{k=1}^K y_{m,k}$.

Fix any $m = 1,\dots,K$.
Let $\Pi_m$ be the set of all subsets of $\{1,2,\dots,K\}$ of exactly $m$ elements.
There are $\binom{K}{m}$ elements (sets) in $\Pi_m$.
For any $J \in \Pi_m$, take any $\beta>1$ and let $\mathbf{p} = (p_1,\dots,p_K)$ be given by $p_k = m\tau \id_{\{k \in J\}} + \beta \id_{\{k \notin J\}}$, $k=1,\dots,K$.
Since $\sum_{k=1}^{K} f(p_k) = K$, \eqref{eq:rejrelation2} implies $ 1 \le \sum_{k=1}^K \lambda_k g_k(m\tau) =  \sum_{k\in J} y_{m,k}$.
Therefore,
\[
  \binom{K}{m}
  \le
  \sum_{J\in \Pi_m} \sum_{ k\in J } y_{m,k}
  =
  \binom {K-1}{m-1} \sum_{k=1}^{K} y_{m,k}
  =
  \binom {K-1}{m-1} T_m.
\]
This gives $T_m\ge K/m$.

For $x\in ((m-1)\tau,m\tau]$ and each $k$,
we have $\lambda_k g_k(x)\ge \lambda_k g_k(m\tau) = y_{m,k}$,
and hence $\lambda_k \ge \int_0^{K\tau} \lambda_k g_k(x) \dd x \ge \tau\sum_{m=1}^K y_{m,k}$.
Therefore,
\begin{equation}\label{eq:hk-new1}
  \sum_{m=1}^K T_m
  =
  \sum_{m=1}^K \sum_{k=1}^K y_{m,k}
  =
  \sum_{k=1}^K \sum_{m=1}^K y_{m,k}
  \le
  \frac{1}{\tau} \sum_{k=1}^K \lambda_K = \frac{1}{\tau} = \sum_{m=1}^K\frac K m.
\end{equation} 

Putting $\sum_{k\in J} y_{m,k} \ge 1$, $T_m\ge K/m$ and \eqref{eq:hk-new1} together,
we get $T_m = K/m$ for each $m=1,\dots,K$, and $\sum_{k\in J} y_{m,k} = 1$ for each $J\in \Pi_m$.
This further implies    $y_{m,k} = 1/m$ for all $m\le K-1$ and all $k$.
Note that the case of $m=K$ is not concluded here since $\Pi_K$ only has one element,
and the analysis of this case requires $K$ to not be a prime number.
Write $K=k_1k_2$ for some integers $k_1,k_2 \ge 2$.

Take any $I \in \Pi_{k_1} $ and $J \in \Pi_{k_2 -1}$ such that $I\cap J = \varnothing$, by noting that $k_1 + k_2 - 1 < K$.
Let $\mathbf{p} = (p_1,\dots,p_K)$ be given by
\[
  p_k =  K\tau \id_{\{k \in I\}} +  k_2\tau \id_{\{k \in J\}} + \beta \id_{\{k \notin I \cup J\}},
  \qquad
  k=1, \dots,K.
\]
We have $\sum_{k=1}^{K}f(p_k) = k_1 + (k_2-1) K/k_2 = K$.
By \eqref{eq:rejrelation2} and $y_{k_2,k}=1/k_2$, we have
\[
  1
  \le
  \sum_{k=1}^{K} \lambda_k g_k(p_k)
  =
  \sum_{k\in I} y_{K,k} + \sum_{k\in J} y_{k_2,k}
  =
  \sum_{k\in I} y_{K,k} + (k_2-1) \frac{1}{k_2}.
\]
Hence, $\sum_{k\in I} y_{K,k} \ge k_1/K $ for any $I \in \Pi_{k_1}$.
On the other hand, $\sum_{k=1}^K y_{K,k} = T_K = 1$, which leads to $y_{K,k} = 1/K$ for all $k=1,\dots,K$.
Therefore, we obtain $y_{m,k} = \frac{1}{m}$ for all $m,k = 1,\dots,K$.
This implies
\[
  \lambda_k \ge \int_0^{K\tau} \lambda_k g_k(x) \dd x \ge \tau\sum_{m=1}^K y_{m,k} = \frac 1K.
\]   
Since $\sum_{k=1}^K \lambda_k=1$,
we now know $g_k=f$ almost everywhere, which further implies $g_k\le f$, and $\lambda_k = 1/K$, $k=1,\dots,K$.
Therefore, both sides of \eqref{eq:rejrelation2} coincide, which is a contradiction.
Thus, $H^*_K$ is admissible if $K$ is not a prime number.
\end{proof}

For computing $H^*_K$, we can use our generic algorithm, Algorithm~\ref{alg:generic},
which takes time $O(-K\log\delta)$, where $\delta$ is the desired accuracy.
A precise expression is, e.g.,
\[
  H^*_K(p_1,\dots,p_K)
  :=
  \min
  \left\{
    \epsilon:=\frac{K\ell_K p_j}{i}
    :
    i,j\in\{1,\dots,K\},
    \frac{1}{K}
    \sum_{k=1}^K
    f\left(\frac{p_k}{\epsilon}\right)
    \ge
    1
  \right\},
\]
where the range of $\epsilon$ follows from $f$ changing its value only at the points of the form $i/(K\ell_K)$.
However, this expression takes time $O(K^3)$ to compute.

Since $f(x)\le1/(\ell_K x)$, we have $H^*_K\ge\ell_K M_{-1,K}$.
It is instructive to compare this with the row of \citet[Table~1]{Vovk/Wang:2020Biometrika} for the harmonic mean.

Using Theorem \ref{thm:Simes}, we have $S_K\le F\le H_K$ for any symmetric p-merging function $F$ dominating $H_K$, including $F=H^*_K$.
Hence, the improvement of any $F$ over $H_K$, measured by the ratio $H_K/F$, should always be in $[1,\ell_K]$.
The improvement ratio $H_K/H_K^*$ will be analyzed in Section \ref{sec:improve}.

In Theorem \ref{th:o1}, we obtain that $H_K^*$ is admissible if $K$ is not a prime number.
Quite surprisingly,
if $K$ is a prime number, then $H_K^*$ may be strictly dominated by some non-symmetric p-merging functions.
In the following simple example, we give the dominating functions for $K=2$ and $K=3$.
More complicated examples can be constructed for larger prime numbers,
although we do not know whether $K$ being prime always implies non-admissibility of $H_K^*$.
\begin{example}\label{ex:prime}
In the case $K=2$,
$H^*_2: (p_1,p_2)\mapsto  3 p_{(1)} \wedge \frac{3}{2}p_{(2)}$
is strictly dominated by $F: (p_1,p_2) \mapsto 3p_1 \wedge \frac{3}{2} p_2$,
which is a (non-symmetric) p-merging function because for any p-variables $P_1,P_2$ and $\alpha \in (0,1)$, 
\[
  \textstyle
  \p(F(P_1,P_2) \le \alpha)
  \le
  \p \left(P_1 \le \frac 13 \alpha \right) + \p\left(P_2 \le \frac 23 \alpha \right)
  \le
  \frac{1}{3} \alpha + \frac{2}{3} \alpha
  =
  \alpha.
\]
In the case $K=3$, $H_3^*$ is induced by the calibrator $3g$ on $(0,1]$, where
\[
  \textstyle
  g := \id_{[0,2/11]} + \frac12 \id_{(2/11,4/11]} + \frac13 \id_{(4/11,6/11]}.
\]
Let the function $F$ be  given by the rejection set, for $\epsilon \in (0,1)$,
\[
  R_{\epsilon}(F) = \epsilon \{\mathbf{p} \in [0,\infty)^3: g_1(p_1) + g_2(p_2) + g_3(p_3) \ge 1  \},
\]
where $g_1 := g + \frac{1}{6} \id_{(4/11, 6/11]}$,
$g_2:= g - \frac{1}{12}\id_{ (4/11, 6/11]}$, and $g_3:=g_2$.
By Theorem \ref{th:e}, $F$ is a (non-symmetric) p-merging function.
Direct calculation shows that $F$  strictly dominates $H_3^*$.
\end{example}

Example \ref{ex:prime} also shows that $H_K\wedge 1$ is not admissible for any $K\ge 2$,
since it is either strictly dominated by $H_K^*$ ($K\ge 4$) or by the functions in Example~\ref{ex:prime} ($K=2,3$).

\subsection{Admissibility for the O-family}

Next, we show that, except for the maximum merging function $G_{K,K}$, each member of the O-family is admissible
if we trivially modify it by a zero-one adjustment, as in Proposition~\ref{prop:lsc}.
Although $G_{K,K}$ fails to be admissible, it is admissible among symmetric p-merging functions after this modification.

\begin{theorem}\label{pr:o1} 
  The p-merging function
  \[
    \mathbf p\mapsto
    G_{k,K}(\mathbf{p}\wedge\mathbf{1})\wedge\id_{\{\min(\mathbf p)>0\}}
    =
    G_{k,K}(\mathbf{p})\wedge\id_{\{\min(\mathbf p)>0\}}
  \]
  is admissible for $k=1,\dots,K-1$,
  and it is admissible among symmetric p-merging functions for $k=K$.
\end{theorem}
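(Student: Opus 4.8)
The plan is to handle the two cases separately, using the rejection-region characterisation together with Proposition~\ref{prop:lsc} and Proposition~\ref{pr:g}. Write $F_{k}$ for the zero-one adjusted function $\mathbf p\mapsto G_{k,K}(\mathbf p)\wedge\id_{\{\min\mathbf p>0\}}$; by Example~\ref{ex:ex-th:e} it is induced (in the sense of \eqref{eq:calibrator2}) by the calibrator $f_k:=(K/k)\id_{[0,k/K]}$ together with the value $f_k(0)=\infty$. For $k\le K-1$ I would apply part (ii) of Proposition~\ref{pr:g}: assume for contradiction that there are $(\lambda_1,\dots,\lambda_K)\in\Delta_K$ and calibrators $g_1,\dots,g_K$ with $R_\epsilon(F_k)\subsetneq\{\mathbf p:\sum_k\lambda_k g_k(p_k)\ge1\}$. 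The key structural fact is that $\sum_k f_k(p_k)\ge K$ exactly when at least $k$ of the coordinates lie in $[0,k/K]$. So for any index set $J$ with $|J|=k$ and any $x\in(0,k/K]$, the point with $p_j=x$ for $j\in J$ and $p_j=\beta>1$ otherwise lies in $R_1(F_k)$, forcing $\sum_{j\in J}\lambda_j g_j(x)\ge1$. Summing this over all $\binom{K}{k}$ such $J$ (the same counting step as in the proof of Theorem~\ref{th:o1}) gives $\binom{K-1}{k-1}\sum_j\lambda_j g_j(x)\ge\binom{K}{k}$, i.e.\ $\sum_j\lambda_j g_j(x)\ge K/k$ for all $x\in(0,k/K]$. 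Integrating, $\sum_j\lambda_j\int_0^1 g_j\ge\sum_j\lambda_j\int_0^{k/K}g_j\ge (K/k)(k/K)=1$, but $\int_0^1 g_j\le1$ and $\sum_j\lambda_j=1$, so all inequalities are equalities; hence each $g_j$ equals $K/k$ a.e.\ on $(0,k/K]$ and vanishes a.e.\ on $(k/K,1]$, so $\sum_j\lambda_j g_j=f_k$ a.e., and after passing to the left-continuous (equivalently lower semicontinuous in the rejection-region sense) representatives the two sets in \eqref{eq:rejrelation2} coincide — contradiction.

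For the case $k=K$ the function $G_{K,K}$ is the maximum, which is plainly inadmissible (dominated by $\mathbf p\mapsto p_1$, as noted in Section~\ref{sec:3}), so here I would instead show admissibility \emph{within symmetric p-merging functions} using part (i) of Proposition~\ref{pr:g}. Suppose a calibrator $g$ satisfies $R_\epsilon(F_K)\subsetneq\{\mathbf p:\frac1K\sum_k g(p_k)\ge1\}$. Now $\sum_k f_K(p_k)\ge K$ iff every coordinate lies in $[0,1]$, so the relevant test points are $\mathbf p=(x_1,\dots,x_K)$ with all $x_j\in(0,1]$; taking all coordinates equal to $x\in(0,1]$ gives $g(x)\ge1=f_K(x)$ for all such $x$. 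Integrating, $\int_0^1 g\ge1$, so $g=1$ a.e.\ on $(0,1]$, i.e.\ $g=f_K$ a.e.; as before the left-continuous versions agree and the two rejection sets coincide, contradicting strict inclusion. One must also check the trivial-but-necessary preliminaries: that $F_k$ really is a symmetric p-merging function taking values in $[0,1]$ (immediate from Proposition~\ref{prop:lsc} and the fact that $G_{k,K}$ is a p-merging function), and that the hypotheses of Proposition~\ref{pr:g} — that $F_k$ is a $[0,1]$-valued p-merging function satisfying \eqref{eq:calibrator2} for a decreasing $f$ — are met, which is exactly the content of Example~\ref{ex:ex-th:e}.

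The main obstacle I anticipate is the very last step in each case: deducing that the two rejection sets literally \emph{coincide} — contradicting the strictness of the inclusion — from the mere a.e.\ equality of the calibrators. Because the inclusion \eqref{eq:rejrelation2} need not imply $\sum_j\lambda_j g_j\ge f_k$ pointwise, one cannot simply say ``equal a.e.\ implies equal.'' The fix is to exploit that a calibrator may be taken left-continuous (this is used throughout the paper, e.g.\ in the proof of Theorem~\ref{th:o1}: ``$f$ is left-continuous, which further implies $g\le f$''), so a.e.\ equality plus left-continuity pins down $\sum_j\lambda_j g_j$ and $f_k$ to the same function on $(0,1]$; combined with the behaviour at $0$ (the $f_j(0)=\infty$ convention, forced by Proposition~\ref{prop:lsc} for admissibility) and the fact that both induced functions are determined by \eqref{eq:calibrator2}, the rejection regions at every level $\epsilon$ agree, so no strict domination is possible. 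I would also need to double-check the boundary coordinate value $k/K$ (open vs.\ closed endpoint) does not create a genuine difference in the rejection sets — it does not, since $f_k$ is left-continuous and the discrepancy set is a null set that is also "lower-set negligible" in the sense relevant to \eqref{eq:region}.
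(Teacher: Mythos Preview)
Your strategy matches the paper's: invoke Proposition~\ref{pr:g}, test with vectors having exactly $k$ coordinates equal to some $x\le k/K$ and the rest $>1$, then sum over all $k$-subsets $J$. The case $k=K$ is fine. For $k\le K-1$, however, there is a real gap. From your integral equalities you correctly get $\int_0^1 g_j=1$ and $g_j=0$ a.e.\ on $(k/K,1]$ for each $j$ with $\lambda_j>0$, but the jump to ``each $g_j$ equals $K/k$ a.e.\ on $(0,k/K]$'' does not follow from that alone (a decreasing $g_j$ with $\int_0^{k/K}g_j=1$ need not be constant). More seriously, even granting that each $g_j=f_k$ on $(0,1]$, the single-variable identity $\sum_j\lambda_j g_j(x)=f_k(x)$ does \emph{not} make the two sets in \eqref{eq:rejrelation2} coincide: the right-hand set is $\{\mathbf p:\sum_j\lambda_j f_k(p_j)\ge1\}$ with possibly unequal weights $\lambda_j$, whereas the left-hand set has weights $1/K$. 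You never establish $\lambda_j=1/K$, and without it the two sets can genuinely differ.

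The missing step, which the paper does carry out, is this: once you know the total $\sum_{j=1}^K\lambda_j g_j(x)=K/k$, the averaging identity $\binom{K-1}{k-1}\sum_j\lambda_j g_j(x)=\binom{K}{k}$ forces every individual constraint $\sum_{j\in J}\lambda_j g_j(x)\ge1$ to be an \emph{equality}. Since $k\le K-1$, for any two indices $i,i'$ one can pick $J$ with $i\in J$, $i'\notin J$ and swap them, giving $\lambda_i g_i(x)=\lambda_{i'}g_{i'}(x)$; hence all $\lambda_j g_j(x)$ are equal to $1/k$. Combining $\lambda_j g_j(k/K)=1/k$ with $g_j(k/K)\le K/k$ (from $\int_0^1 g_j\le1$ and monotonicity) then gives $\lambda_j\ge1/K$, so $\lambda_j=1/K$ for all $j$ and $g_j=f_k$; now the two rejection sets really do coincide. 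This ``swap'' deduction is exactly the paper's line ``$\sum_{k\in J}y_k=1$ for each $J\in\Pi_m$, and further $y_k=1/m$,'' and it is the substantive piece your argument is missing.
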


\begin{proof}
As we see from Example \ref{ex:ex-th:e}, for each $k=1,\dots,K$,
$\mathbf p\mapsto G_{k,K}(\mathbf{p})\wedge\id_{\{\min(\mathbf p)>0\}}$
is induced by $f: x\mapsto\infty\id_{\{x=0\}} + (K/k) \id_{\{x\in(0,k/K]\}}$.
 
First, fix $m=1,\dots,K-1$.
Using Proposition \ref{pr:g}, suppose, for the purpose of contradiction,
that there exist $(\lambda_1,\dots,\lambda_K) \in \Delta_K $ and calibrators $g_1,\dots,g_K$ satisfying \eqref{eq:rejrelation2}.
For each $k=1,\dots,K$,
denote $y_k := \lambda_k g_k(m/K)$.
Since $1 = \int_0^1 g_k(x)\dd x \ge \frac{m}{K} g_k(m/K)$,
we have $ y_k \le \lambda_k K/m$, which implies $\sum_{k=1}^K y_k \le K/m$.

Let $\Pi_m$ be the set of all subsets of $\{1,2,\dots,K\}$ of exactly $m$ elements.
There are $\binom{K}{m}$ elements (sets) in $\Pi_m$.
For any $J\in \Pi_m$ , take any $\beta > 1$ and let $\mathbf{p} = (p_1,\dots,p_K)$ be given by $p_k = \frac{m}{K} \id_{\{k\in J\}} + \beta \id_{\{k\notin J\}}$,
$k=1,\dots,K$.
Since $\sum_{k=1}^K f(p_k) = K$, \eqref{eq:rejrelation2} implies $1\le \sum_{k=1}^K \lambda_k g_k (p_k) = \sum_{k\in J} y_k$.
Therefore, 
\[
  \binom{K}{m} \le \sum_{J\in \Pi_m} \sum_{ k\in J } y_k = \binom {K-1}{m-1} \sum_{k=1}^{K} y_k \le \binom {K-1}{m-1} \frac{K}{m} = \binom{K}{m}.
\]
This implies $ \sum_{k\in J} y_k =1$ for each $J \in \Pi_m$, and further $y_k = 1/m$ for each $k=1,\dots,K$.
Therefore, $ \lambda_k \ge \int_{0}^{m/K} \lambda_k g_k(x)\dd x \ge \frac{m}{K} y_k = 1/K$.
Since $\sum_{k=1}^K \lambda_k =1$, we have $g_k = f$ almost everywhere,
which further implies $g_k \le f $, and $\lambda_k = 1/K $, $k=1,\dots,K$.
Therefore, both sides of \eqref{eq:rejrelation2} coincide, which is a contradiction.
Thus, $G_{m,K}(\mathbf{p}) \wedge \id_{\{ \min(\mathbf{p})>0\}}$ is admissible for each $m=1,\dots,K-1$.

To prove the statement for $m=K$, suppose that there exists a calibrator $g$ satisfying  \eqref{eq:rejrelation}.
Since $f(x) = 1$ for $x\in (0,1]$, we have $\sum_{k=1}^K f(x) =K$, which gives $K\le  K g(x) $, and thus $ g(x) \ge K/m = f(x)$.  
We have $\int_0^{m/K} g(x)\dd x \ge \int_0^{m/K} f(x)\dd x = 1 $.
As $g$ is a calibrator, this means $g=f$ almost everywhere and further implies $g\le f$.
Therefore, both sides of \eqref{eq:rejrelation} coincide, which is a contradiction.
Thus, $G_{K,K}(\mathbf{p}) \wedge \id_{\{ \min(\mathbf{p})>0\}}$ is admissible among symmetric p-merging functions. 
\end{proof}

\section{The M-family}\label{sec:M}

In this section, we study admissibility and the domination structure among the M-family of p-merging functions,
which turn out to be drastically different from those of the O-family, as members in the M-family are generally not admissible,
except for the cases of $ F_{-\infty,K}$ and $F_{\infty,K}$ covered in Theorem \ref{pr:o1}.
The key result of this section is Theorem~\ref{th:m1}, which gives another admissible p-merging function.

\subsection{Coefficients in the M-family}
\label{sec:brk}

To study functions $F_{r,K}=b_{r,K} M_{r,K} \wedge 1$ in the M-family,
we first need to identify the constants $b_{r,K}$, which unfortunately do not always admit an analytical form.
The values of $b_{r,K}$ are obtained in \citet{Vovk/Wang:2020Biometrika} for the cases $r\ge 1/(K-1)$ (Proposition 3),
$r=0$ (Proposition 4), and $r=-1$ (Proposition 6),
where the proposition numbers refer to those in \citet{Vovk/Wang:2020Biometrika}.
In addition, the values $b_{-\infty,K}=K$ and $b_{\infty,K}=1$ are trivial to check.
Below, we complement these results by providing formulas of $b_{r,K}$ for all $r\in \R$ via an  analytical equation.
We fix some notation which will be useful throughout this section.
For a fixed $K$ and $r\in (-\infty,1/(K-1))$, let $c_r$ be the unique number $c\in(0,1/K)$ solving the equation
\begin{align*}
  (K-1)(1-(K-1)c)^r+c^r
  =
  K\frac{(1-(K-1)c)^{r+1}-c^{r+1}}{(r+1)(1-Kc)}, & \quad\text{if $r\notin\{-1,0\}$}; \\
  \frac{1-Kc}{Kc(1-(K-1)c)} = \log(1/c-(K-1)), & \quad\text{if $r=-1$}; \\
  K(1-Kc) = \log(1/c-(K-1)), & \quad\text{if $r=0$}.
\end{align*}
The existence and uniqueness of the solution $c$ to the above equation can be checked directly,
and it is implied by Lemma~3.1 of \citet{Jakobsons/etal:2016} in a more general setting.
Moreover, set $c_r:=0$ if $r\ge 1/(K-1)$, and write
\begin{align}\label{eq:def-dr}
  d_r:=1-(K-1)c_r, \quad r\in \R.
\end{align}
Notice that we always have $0\le c_r<1/K<d_r\le1$.

The proofs of propositions in this section are put in Supplemental Article, Section~\ref{app:a7}.

\begin{proposition}\label{prop:grand}
  For $K\ge 2$ and $r\ge\frac{1}{K-1}$, we have $b_{r,K}=((r+1)\wedge K)^{1/r}$.
  For $K\ge 3$ and $r\in(-\infty,\frac{1}{K-1})$, we have $b_{r,K}=1/M_{r,K}(c_r,d_r,\dots,d_r)$. 
  For $r\in(-\infty,1)$, we have $b_{r,2}=2$. 
\end{proposition}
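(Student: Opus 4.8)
The plan is to identify $b_{r,K}$ as the solution of a robust risk-aggregation problem and then read off the three cases. Because $M_{r,K}$ is positively homogeneous, for $b\ge1$ the function $\mathbf p\mapsto bM_{r,K}(\mathbf p)\wedge1$ is a valid p-merging function if and only if $\Psi_r(s):=\sup_{\mathbf P\in\mathcal P_Q^K}Q(M_{r,K}(\mathbf P)\le s)\le bs$ for all $s\in(0,1/b)$; rescaling a near-optimal $\mathbf P$ — carry a $\lambda$-shrunk copy of $\mathbf P$ restricted to $\{M_{r,K}(\mathbf P)\le s\}$ on an event of probability $\lambda\,Q(M_{r,K}(\mathbf P)\le s)$ — shows that $s\mapsto\Psi_r(s)/s$ is nonincreasing. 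Hence $b_{r,K}$, the least valid $b$ and the value making $F_{r,K}$ precise, equals $\lim_{s\downarrow0}\Psi_r(s)/s$, and preciseness amounts in addition to $\Psi_r(1/b_{r,K})=1$, i.e.\ to the existence of (asymptotically) extremal p-variables on which $M_{r,K}$ is $Q$-a.s.\ (nearly) constant $=1/b_{r,K}$; for $r\ne0$ this is joint mixability of the laws of $P_k^r$ in the sense of \citet{WW16}, cf.\ \eqref{eq:existp-JM} and Example \ref{ex:ex-th:e2}. So it suffices to compute $\Psi_r$ and exhibit the corresponding coupling.

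To evaluate the rate I would fix $\mathbf P$, set $E:=\{M_{r,K}(\mathbf P)\le s\}$ and $q:=Q(E)$; being a p-variable forces $Q(P_k\le t\mid E)\le t/q$, i.e.\ $P_k\mid E$ is stochastically at least $\mathrm U[0,q]$ for each $k$. Suppose first $r\ge1/(K-1)$ (so $r>0$): then $M_{r,K}(\mathbf P)\le s$ means $\sum_kP_k^r\le Ks^r$, which forces $P_k\le K^{1/r}s$ on $E$ (hence $q\le K^{1/r}s$) and, on taking conditional means, $Ks^r\ge\E^Q[\sum_kP_k^r\mid E]\ge Kq^r/(r+1)$ (hence $q\le(r+1)^{1/r}s$), so $q\le((r+1)\wedge K)^{1/r}s$. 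Conversely, the least attainable essential supremum of $\sum_kP_k^r$ over couplings with $P_k\sim\mathrm U[0,q]$ equals $\max\big(Kq^r/(r+1),\,q^r\big)$ by the joint-mixability results of \citet{WW16} — complete mixability of the monotone-density law $(\mathrm U[0,q])^r$, valid exactly for $r\in[1/(K-1),K-1]$, in the first regime, an anti-comonotone coupling when $r\ge K-1$ — and setting this $\le Ks^r$ shows $q=((r+1)\wedge K)^{1/r}s$ is achievable. This yields $b_{r,K}=((r+1)\wedge K)^{1/r}$, the change of formula at $r=K-1$ being exactly where complete mixability of $(\mathrm U[0,q])^r$ breaks down; it recovers Proposition 3 of \citet{Vovk/Wang:2020Biometrika}.

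When $K\ge3$ and $r<1/(K-1)$ (possibly $r\le0$), complete mixability of $(\mathrm U[0,q])^r$ fails — because of unbounded support if $r<0$, or of the mean sitting too close to the top of the support if $0<r<1/(K-1)$ — and the extremal configuration is instead the one-sided one: a single small p-variable and $K-1$ clustered ones, i.e.\ after rescaling the value vector $(c,d,\dots,d)$ with $d=1-(K-1)c$. Using convexity of $x\mapsto x^r$ for $r\notin[0,1]$ and concavity for $r\in(0,1)$ together with the optimal-transport optimization of \citet[Lemma~3.1]{Jakobsons/etal:2016} — which both selects this configuration as the genuine optimizer over all couplings and produces the first-order equation pinning down its free parameter — I would obtain $\lim_{s\downarrow0}\Psi_r(s)/s=1/M_{r,K}(c_r,d_r,\dots,d_r)$, the three displayed forms of the equation for $c_r\in(0,1/K)$ being this stationarity condition for $c\mapsto M_{r,K}(c,1-(K-1)c,\dots,1-(K-1)c)^{-1}$ and its geometric- and harmonic-mean limits at $r=0$ and $r=-1$; that the relevant aggregate can be made $Q$-a.s.\ constant (needed for preciseness) again follows from \citet{WW16}. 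For $K=2$ this family is optimized at the endpoint $c=1/2\notin(0,1/2)$, so the interior analysis degenerates, and the worst case is instead the elementary Bonferroni-type coupling in which $P_1,P_2$ alternate being $\mathrm U[0,q]$ (using $\{M_{r,2}\le s\}\subseteq\{p_1\le s\}\cup\{p_2\le s\}$ for the matching upper bound); this gives $\Psi_r(s)=2s$ and $b_{r,2}=2$ for every $r<1$, consistently with the value $((1+1)\wedge2)^1=2$ from the first part at $r=1$.

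I expect the main obstacle to be the achievability (lower-bound) half for $r<1/(K-1)$: proving that the $(c_r,d_r,\dots,d_r)$ configuration is globally optimal among all p-variable couplings rather than merely a critical point, and that the corresponding aggregate is realizable as a $Q$-a.s.\ constant, both of which rest on the delicate joint-mixability machinery of \citet{WW11,WW16} and the optimization of \citet{Jakobsons/etal:2016}. Subsidiary points are the non-integrability of $x\mapsto x^r$ at $0$ for $r\le-1$ (so the conditional-mean bound used in the $r>0$ case is vacuous and one must work directly with the lower tail of the soft minimum $M_{r,K}(\mathbf P)$), the separate limiting treatment of the defining equation for $c_r$ at $r=0$ and $r=-1$, and the use of the closure facts (Proposition \ref{prop:limit}) to pass cleanly from validity for all $b>b_{r,K}$ to preciseness at $b=b_{r,K}$.
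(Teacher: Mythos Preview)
Your approach is essentially the same as the paper's: both reduce the computation of $b_{r,K}$ to the robust-aggregation problem $\inf_{\mathbf U\in\mathcal U^K} q_1(M_{r,K}(\mathbf U))$ (equivalently $\sup q_0$ for $r<0$) over uniform marginals, and both resolve the nontrivial case $K\ge3$, $r<1/(K-1)$ by invoking the $(c_r,d_r,\dots,d_r)$ configuration from the risk-aggregation literature. The paper gets there more directly, noting that homogeneity makes $1/b_{r,K}=\inf_{\mathbf U}q_1(M_{r,K}(\mathbf U))$ without your detour through $\Psi_r(s)$ and its limit as $s\downarrow0$; it then cites Theorem~3.4 of \citet{Wang/etal:2013} (packaged as Lemma~\ref{lem:newlem}) for the value, whereas you sketch the mechanism via conditional stochastic domination and appeal to \citet{Jakobsons/etal:2016} and \citet{WW16}. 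Your explicit mean-plus-mixability argument for $r\ge1/(K-1)$ is a nice self-contained substitute for the paper's citation of Proposition~3 in \citet{Vovk/Wang:2020Biometrika}, and your $K=2$ sketch is correct (the antitonic coupling $U_2=1-U_1$ gives $M_{r,2}\le M_{1,2}=1/2$ a.s., matching the Bonferroni upper bound); the paper simply defers that case to Section~\ref{app:K2}.
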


Via well-known inequalities on generalized mean functions \citep{Hardy/etal:1952},
it is straightforward to check, without using Proposition \ref{prop:grand},
that if $r<s$ and $rs>0$, then
\begin{equation} \label{eq:compare-p1}
  K^{1/s - 1/r}b_{r,K} \le b_{s,K} \le b_{r,K}.
\end{equation}
The relationship \eqref{eq:compare-p1} conveniently gives, among other implications,
the monotonicity of the mapping $r\mapsto b_{r,K}$ and its continuity except at $0$.
The continuity at $0$ can be verified via Proposition~\ref{prop:grand}.

\subsection{Admissibility of the M-family and improvements}

As illustrated by the numerical examples in \citet{Vovk/Wang:2020Biometrika} and \citet{Wilson:2020},
the most useful cases of the M-family are those with $r\le 0$.
In particular, the \emph{harmonic p-merging function} $F_{-1,K}$,
which is a constant times the harmonic mean p-value of \citet{Wilson:2019} (truncated to 1),
has a special role among the M-family, and it performs similarly to the Hommel function; see \citet{Chen:2020}.
On the other hand, the members $F_{r,K}$ for $r>1$ are rarely useful in practice due to their heavy dependence on large realized p-values.

As we already mentioned, members of the M-family are generally not admissible,
and we will construct dominating admissible functions.
We briefly explain the main idea for the case $r<0$, as the other cases are similar.
Using the equality $b_{r,K}^{r} = K(c^r_r+(K-1)d_r^r)^{-1}$ in Proposition \ref{prop:grand},
the rejection region of $F_{r,K}$ for $\epsilon \in (0,1)$ is given by
\[
  R_\epsilon(F_{r,K})
  =
  \epsilon
  \left\{
    \mathbf p \in [0,\infty)^K: \frac{\sum_{k=1}^K p_k^{r}}{c_r^r + (K-1)d_r^r} \ge 1
  \right\}
  =
  \epsilon
  \left\{
    \mathbf p \in [0,\infty)^K:
    \sum_{k=1}^K\frac{p_k^{r} - d_r^r}{c_r^r - d_r^r} \ge 1
  \right\}
\]
(see Example \ref{ex:ex-th:e2}).
The strictly convex function $x\mapsto K(x^r-d_r^r)/(c_r^r - d_r^r)$ is generally not a calibrator. 
Nevertheless, there is a simple modification which induces a p-merging function dominating $F_{r,K}$. 
Define the function 
\[
  f_r: x\mapsto K
  \left(
    \frac{x^r - d_r^r}{c_r^r - d_r^r} \wedge 1
  \right)_+.
\]
We can check that each $f_r$ is a calibrator.
Let $F_{r}^*$ be the p-merging function induced by $f_r$, that is,
\begin{equation}\label{eq:Frej}
  R_{\epsilon}(F_{r}^*)
  =
  \epsilon
  \left\{
    \mathbf p \in [0,\infty)^K:
    \sum_{k=1}^K
    \left(
      \frac{p_k^r - d_r^r}{c_r^r - d_r^r}
    \right)_+
    \ge
    1
  \right\},
  \qquad
  \epsilon\in(0,1).
\end{equation} 
It is clear that $F_{r}^*$ dominates $F_{r,K}$.
Moreover, the calibrator $f_r$ satisfies \eqref{eq:f-cond} with $\eta=c_r$,
which means that $F_{r}^*$ is admissible by Theorem \ref{th:admissible}.
In this way, an admissible p-merging function dominating $F_{r,K}$ is constructed.

In the next result, we give a rigorous statement of the above idea for all $r<K-1$,
and show that the rejection regions of $F_{r}^*$ have a very simple relationship to those of $F_{r,K}$.
Remember that the minimum $\wedge$ of two vectors is understood component-wise.

\begin{theorem}\label{th:m1}
  For $K\ge 3$ and $r\in(-\infty,K-1)$,
  $F_{r,K}$ is strictly dominated by the p-merging function $F_{r,K}^*$ defined,
  for $\mathbf p\in (0,\infty)^K$ and $\epsilon \in (0,1)$,
  via
  \begin{equation}\label{eq:M-equiv}
    F^*_{r,K}(\mathbf p) \le \epsilon
    ~\Longleftrightarrow~
    F_{r,K}(\mathbf p\wedge (\epsilon d_r  \mathbf 1 ))\le \epsilon
    \mbox{~~or~~$\min(\mathbf p)=0$},
  \end{equation}
  where $d_r$ is given in \eqref{eq:def-dr}.
  Moreover, $F_{r,K}^*$ is admissible unless $r=1$.
\end{theorem}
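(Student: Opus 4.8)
The plan is to realise $F^*_{r,K}$ as the p-merging function induced, via Theorem~\ref{pr:e}, by an explicit calibrator $f_r$ obtained by truncating the ``formal calibrator'' attached to $F_{r,K}$ in Example~\ref{ex:ex-th:e2}, and then to read off admissibility from condition~\eqref{eq:f-cond}. For $r\in(-\infty,1/(K-1))$, $r\neq0$, I would take $f_r$ as already introduced before the theorem, namely
\[
  f_r(x):=K\left(\frac{x^r-d_r^r}{c_r^r-d_r^r}\wedge1\right)_+\quad(x>0),\qquad f_r(0):=\infty,
\]
with the analogous construction using logarithms in place of powers when $r=0$; for $r\in[1/(K-1),K-1)$, where $c_r=0$ and $d_r=1$, I would instead take $f_r(x):=\frac{r+1}{r}(1-x^r)_+$ for $x>0$ and $f_r(0):=\infty$. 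The first step is to check that, in each of these regimes, $f_r$ is a genuine (indeed admissible) calibrator: decreasing, vanishing on $(d_r,\infty)\supseteq(1,\infty)$, equal to $K$ on $(0,c_r]$, upper semicontinuous, and with $\int_0^1 f_r=1$; the last identity is precisely where the defining equation of $c_r$ (equivalently Proposition~\ref{prop:grand}) together with the relation $c_r^r+(K-1)d_r^r=K\,b_{r,K}^{-r}$ enter.

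The second step is to identify the function induced by $f_r$ with the one described by~\eqref{eq:M-equiv}. Expanding $\frac1K\sum_k f_r(p_k/\epsilon)\ge1$ and using that $\sum_k a_k\ge1\Longleftrightarrow\sum_k(a_k\wedge1)\ge1$ for nonnegative $a_k$, the inner truncations $(\cdot)_+$ and $(\cdot)\wedge1$ in $f_r$ turn out to be equivalent to clipping each coordinate at level $\epsilon d_r$, while the threshold that survives is exactly $K\,b_{r,K}^{-r}$; homogeneity of $M_{r,K}$ then rewrites the inequality $\sum_k((p_k/\epsilon)\wedge d_r)^r$ versus $K\,b_{r,K}^{-r}$ (in the direction dictated by the sign of $r$) as $F_{r,K}(\mathbf p\wedge(\epsilon d_r\mathbf1))\le\epsilon$, which is~\eqref{eq:M-equiv} (the clause ``$\min\mathbf p=0$'' being captured by the convention $f_r(0)=\infty$). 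In particular~\eqref{eq:M-equiv} does define a p-merging function, namely the one induced by $f_r$. Since $\mathbf p\wedge(\epsilon d_r\mathbf1)\le\mathbf p$ and $F_{r,K}$ is increasing, $R_\epsilon(F_{r,K})\subseteq R_\epsilon(F^*_{r,K})$ for every $\epsilon$, so $F^*_{r,K}\le F_{r,K}$. For strictness I would exhibit, for small $\epsilon$, a point $\mathbf p$ with $K-1$ tiny coordinates and one coordinate exceeding $\epsilon d_r$: clipping that coordinate pushes $F_{r,K}$ below $\epsilon$ while $F_{r,K}(\mathbf p)$ itself stays above $\epsilon$, whence $F^*_{r,K}(\mathbf p)<F_{r,K}(\mathbf p)$ (the room for such a witness comes from $c_r^r>0$ when $c_r>0$, and from $r<K-1$, i.e.\ $b_{r,K}^{-r}>1/K$, when $c_r=0$).

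The third step, admissibility for $r\neq1$, is then just a matter of checking that $f_r$ meets condition~\eqref{eq:f-cond} with $\eta:=c_r$ and $\tau:=d_r=1-(K-1)c_r$: $f_r$ equals $K$ on $(0,c_r]$; its one-sided value at $\eta$ lies in $(K/(K-1),K]$, being $K$ when $c_r>0$ and $\frac{r+1}{r}\in(K/(K-1),K]$ for $r\in[1/(K-1),K-1)$ (this is where $K\ge3$ is essential); on $(\eta,\tau]$ it is an affine image, with nonzero multiplier, of $x\mapsto x^r$ (or of $x\mapsto-\log x$ when $r=0$), hence strictly convex if $r<1$ and strictly concave if $r>1$; and $f_r(1)=0$. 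Theorem~\ref{th:admissible} then delivers admissibility of $F^*_{r,K}$ for all $r\in(-\infty,K-1)$ with $r\neq1$. For $r=1$ the calibrator $f_1:x\mapsto2(1-x)_+$ is affine, so~\eqref{eq:f-cond} no longer applies; in that case $F^*_{1,K}$ fails to be admissible (it admits a further strict improvement, as already happens for $K=2$ in Example~\ref{ex:ex-th:e3}), which accounts for the stated exception.

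The hard part, I expect, will not be any single deep idea but the uniform bookkeeping across the four parameter regimes ($r<0$; $r=0$; $0<r<1/(K-1)$; $1/(K-1)\le r<K-1$): each invokes a different branch of the definition of $c_r$ and a different case of Proposition~\ref{prop:grand} when one verifies $\int_0^1 f_r=1$ and the boundary value $f_r(\eta+)$, and the geometric-mean case $r=0$ has to be run through logarithms throughout (including in the strictness witness). Once $f_r$ is secured as an admissible calibrator satisfying~\eqref{eq:f-cond}, the equivalence~\eqref{eq:M-equiv}, the strict domination, and the admissibility all follow comparatively routinely from Theorems~\ref{pr:e} and~\ref{th:admissible}.
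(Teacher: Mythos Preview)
Your proposal follows essentially the same route as the paper's proof: construct the calibrator $f_r$ in the two regimes $r<1/(K-1)$ and $r\in[1/(K-1),K-1)$, verify it is an admissible calibrator using Proposition~\ref{prop:grand}, identify the induced p-merging function with the one defined by~\eqref{eq:M-equiv} via the clipping interpretation, and then invoke condition~\eqref{eq:f-cond} together with Theorem~\ref{th:admissible} for admissibility when $r\neq1$. The observation that $\sum_k a_k\ge1\Leftrightarrow\sum_k(a_k\wedge1)\ge1$ for nonnegative $a_k$ is exactly what disposes of the inner $\wedge1$ in $f_r$, and your strictness witness is along the same lines as the paper's.

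There is one genuine overclaim, however. The clause ``unless $r=1$'' in the theorem means only that admissibility is \emph{not asserted} for $r=1$; it does \emph{not} assert that $F^*_{1,K}$ is inadmissible. Your final paragraph claims inadmissibility for $r=1$ by appealing to Example~\ref{ex:ex-th:e3}, but that example treats $K=2$, which lies outside the hypothesis $K\ge3$ of the present theorem. In fact the paper leaves the admissibility of $F^*_{1,K}$ for $K\ge3$ as an open question (see Remark~\ref{rem:strict-con} and the concluding remarks): the calibrator $f_1(x)=2(1-x)_+$ is affine on $(0,1]$, so condition~\eqref{eq:f-cond} does not apply, but no dominating p-merging function is known. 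You should therefore drop the claim that $F^*_{1,K}$ ``fails to be admissible'' and simply note that Theorem~\ref{th:admissible} does not cover the affine case $r=1$.
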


The proof of the theorem will show that $F^*_{r,K}=F^*_{r}$.

\begin{proof}
We first address the case $r<1/(K-1)$. 
Note that, for $r\in (0,1/(K-1))$,
\[
  R_\epsilon ( F_{r,K})   = \epsilon \left\{\mathbf p \in [0,\infty)^K:  \frac { \sum_{k=1}^K p_k^{r} }{c_r^r + (K-1)d_r^r} \le 1 \right\}
  =
  \epsilon
  \left\{
    \mathbf p \in [0,\infty)^K:
    \sum_{k=1}^K\frac{p_k^{r}-d_r^r}{c_r^r - d_r^r} \ge 1
  \right\}
\]
and
\[
  R_\epsilon(F_{0,K})
  =
  \epsilon
  \left\{
    \mathbf p \in [0,\infty)^K:
    \sum_{k=1}^K  \frac { \log p_k - \log d_0}{\log c_0 - \log d_0} \ge 1
  \right\},
\]
which share a form very similar to the case $r<0$.
Define the functions
\[ f_r : x\mapsto  K  \left(\frac{   x^r -  d_r^r }{ c_r^r - d_r^r    } \wedge 1   \right)_+
  \mbox{~for $r\ne 0$~~~and~~~} 
  f_0: x\mapsto K   \left(  \frac{  \log x -  \log d_0 }{ \log c_0 - \log d_0  } \wedge 1\right)_+.
\]
We can check with Proposition \ref{prop:grand} that
\[
  \int_{c_r} ^{d_r} \frac{   x^r -  d_r^r }{ c_r^r - d_r^r    } \dd x
  =
  \frac{1-Kc_r}{ c_r^r - d_r^r  }\left( \frac{c_r^r + (K-1)d_r^r}{K}  -d_r^r\right) = \frac{1-Kc_r}{K},
\]
which implies $\int_0^1 f_r (x) \d x=1$, 
and similarly for $r=0$.
Hence, $f_r$ is a calibrator, which further satisfies \eqref{eq:f-cond}.
As we explained above for the case $r<0$,  the p-merging function $F^*_r$ induced by $f_r$ strictly dominates 
$F_{r,K}$, and  the admissibility of $F^*_r$  follows from Theorem \ref{th:admissible}.   
Finally,
comparing the conditions for $\mathbf p \in R_\epsilon(F_{r,K})$ and $\mathbf p \in R_\epsilon(F_{r}^*)$, i.e., if $r\ne 0$,
\[
  \sum_{k=1}^K\frac {  (p_k/\epsilon)^{r} -d_r^r }{c_r^r - d_r^r} \ge 1
  \mbox{~~and~~}
  \sum_{k=1}^K\left(\frac {  (p_k/\epsilon)^{r} -d_r^r }{c_r^r - d_r^r} \right)_+\ge 1,
\]
the only difference is that any value $p_k $ larger than $d_r\epsilon$ is treated as $d_r\epsilon$ by $ F^*_{r}$.
This implies $F^*_r=F_{r,K}^*$ for $F_{r,K}^*$ in \eqref{eq:M-equiv}.
The case $r=0$ is similar.

Next, we prove the statement for $r\in [1/(K-1),K-1)$.
Using Proposition \ref{prop:grand}, $  b_{r,K}^{r} = r+1$.
Hence,   the rejection region  of $F_{r,K}$ for $\epsilon \in (0,1)$ is given by 
\[
  R_\epsilon (F_{r,K})
  =
  \epsilon \left\{\mathbf p \in [0,\infty)^K: \frac{r+1}{K} \sum_{k=1}^K p_k^{r} \le 1 \right\} 
  =
  \epsilon \left\{\mathbf p \in [0,\infty)^K: \frac{1}{K} \sum_{k=1}^K g_r(p_k) \ge 1 \right\},
\]
where $g_r:x\mapsto (r+1)(1-x^r)/r$.
Let $\tau=  r/(r+1)\in [1/K, 1-1/K)$.
Define a function $f_r: x\mapsto \tau^{-1} (1-x^r)_+$ for $x>0$ and $f_r(0)=K$.
It is clear that $f_r$ is a calibrator by checking $\int_0^1 f_r (x) \d x =1 $.
Since $f_r \ge g_r$,
we know that the p-merging function  $F^*_r$ induced by $f_r$ dominates $F_{r,K}$.
The domination $F^*_r\le F_{r,K}$ is strict
since it is easy to find some $p_1,\dots,p_K\in (0,\infty)$ such that $\sum_{k=1}^K f_r(p_k) \ge K > \sum_{k=1}^K g_r(p_k)$.
Moreover, for $r\ne 1$, $f_r$ is either strictly convex or strictly concave on $(0,1)$ satisfying \eqref{eq:f-cond},
and hence $F^*_r$ is admissible by Theorem \ref{th:admissible}.
The statement $F^*_r=F^*_{r,K}$ is analogous to the case $r<1/(K-1)$.
\end{proof}

As seen from the proof of Theorem \ref{th:m1}, the calibrator $f_r$ of $F_{r,K}^*$ is given by 
\begin{align*}
  x\mapsto K \left(\frac{x^r - d_r^r}{c_r^r - d_r^r} \wedge 1\right)_+
    &\qquad\text{if $r<1/(K-1)$ and $r\ne 0$}; \\
  x\mapsto K \left(\frac{\log x - \log d_0}{\log c_0 - \log d_0} \wedge 1\right)_+
    &\qquad\text{if $r= 0$}; \\
  x\mapsto K\id_{\{x=0\}} + \frac{r+1}{r} (1-x^r)_+
    &\qquad\text{if $r \in [1/(K-1),K-1)$}.
\end{align*}
\begin{remark}\label{rem:M-calibrators}
  Although in different disguises, the harmonic${}^*$ calibrator $f:=f_{-1}$ of $F_{-1,K}^*$
  (which we refer to as the \emph{harmonic${}^*$ p-merging function})
  and the grid harmonic calibrator \eqref{eq:Hkf} are remarkably similar:
  on the set $\{x>0: 0<f(x)<K\}$, one of them takes the form $f(x)= a/x-b$,
  and the other one takes the form $f(x) = a/\lceil bx\rceil$ for some suitably chosen values of $a,b>0$.
  In other words, the calibrator of $F_{-1,K}^*$ can be seen as a continuous version of that of $H_K^*$.
  Both calibrators are shown in Figure~\ref{fig:Hommel}.
  In Section \ref{sec:9}, we shall see that $F^*_{-1,K}$ and $H_K^*$ perform similarly in our simulation experiments. 
\end{remark}

To approximate $F_{r,K}^*$, we can apply Algorithm~\ref{alg:generic} to the calibrator $f_r$.
Remember that this algorithm computes an upper bound that approximates the true value with accuracy $\delta$
in time $O(-K\log\delta)$.

In the next proposition, we give an explicit formula for $F_{r,K}^*$ in Theorem \ref{th:m1}. 
In what follows, $p_{(1)},\dots,p_{(K)}$ are always the order statistics of components of $\mathbf p$, from the smallest to the largest,
and $\mathbf p_m: =(p_{(1)},\dots,p_{(m)})$ is the vector of the $m$ smallest components of $\mathbf p$.

\begin{proposition}\label{th:m2}
For $K\ge 3$ and $\mathbf p\in [0,\infty)^K$, we have, if $r\in(-\infty, 1/(K-1)) $,
\begin{equation}\label{eq:F-rep}
  F^*_{r,K} ( \mathbf p) = \left(\bigwedge_{m=1}^{K} \frac{ M_{r,m} (\mathbf p_m)}{ M_{r,m} (c_r,d_r,\dots,d_r )}\right)\wedge \id_ {\{ p_{(1)} > 0 \} },  
\end{equation}
and, if $r\in [1/(K-1), K-1)$, with the convention $\cdot/0=\infty$,
\begin{equation} \label{eq:arith} 
  F^*_{r,K}(\mathbf p)
  =
  \left(  \bigwedge_{ m = 1 }^K  \frac{ M_{r,m} (\mathbf p_m) }{(1 - \frac{r K}{(r+1)m})_+} \right) \wedge \id_{\{ p_{(1)} > 0 \}}.
\end{equation}  
\end{proposition}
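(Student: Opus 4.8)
The plan is to compute $F^*_{r,K}$ directly from the calibrator representation established inside the proof of Theorem~\ref{th:m1}, where it is shown that $F^*_{r,K}=F^*_r$ is the p-merging function induced by the explicit calibrator $f_r$ written out there. In each of the three regimes ($r<1/(K-1)$, $r=0$, and $r\in[1/(K-1),K-1)$) this calibrator has, on $(0,\infty)$, the form $f_r(x)=K(\psi_r(x)\wedge1)_+$, where $\psi_r$ is continuous and strictly decreasing: it is the affine-in-$x^r$ function $x\mapsto(x^r-d_r^r)/(c_r^r-d_r^r)$ when $r\ne0$ (with the convention $c_r=0$, $d_r=1$ for $r\ge1/(K-1)$) and the affine-in-$\log x$ function $x\mapsto(\log x-\log d_0)/(\log c_0-\log d_0)$ when $r=0$, normalised so that $\psi_r(d_r)=0$ and, when $r<1/(K-1)$, $\psi_r(c_r)=1$; separately $f_r(0)=K$. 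By \eqref{eq:inf} and \eqref{eq:calibrator2} we then have $F^*_{r,K}(\mathbf p)=\inf\{\epsilon\in(0,1):\frac1K\sum_{k=1}^Kf_r(p_k/\epsilon)\ge1\}$ (with $\inf\varnothing=1$), so everything reduces to describing, for each $\epsilon$, the rejection region in terms of $\psi_r$.

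The first and main step is to recast this rejection condition into an order-statistic form. Sorting the coordinates so that $p_{(1)}\le\cdots\le p_{(K)}$, the summands $\psi_r(p_{(k)}/\epsilon)\wedge1$ are nonincreasing in $k$, and I would use the elementary fact that for a nonincreasing finite sequence $a_1\ge\cdots\ge a_K$ and a threshold $c>0$ one has $\sum_{k=1}^K(a_k)_+\ge c$ if and only if $\max_{1\le m\le K}\sum_{k=1}^m a_k\ge c$ (the partial sums increase while $a_m>0$ and decrease afterwards). Applying this with $a_k=\psi_r(p_{(k)}/\epsilon)$, and then arguing that the cap $\wedge1$ may be discarded --- if some $\psi_r(p_{(k)}/\epsilon)>1$ then already the $m=1$ term gives $\psi_r(p_{(1)}/\epsilon)\wedge1=1$, and otherwise truncating the summation at the last index where $\psi_r$ is nonnegative only increases the sum of positive parts --- one obtains, for every $\mathbf p\in(0,\infty)^K$ and $\epsilon\in(0,1)$,
\[
  \mathbf p\in R_\epsilon(F^*_{r,K})
  \iff
  \sum_{k=1}^m\psi_r\!\left(\frac{p_{(k)}}{\epsilon}\right)\ge1
  \quad\text{for some }m\in\{1,\dots,K\}.
\]

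The remaining step is routine. For each fixed $m$ I would solve $\sum_{k=1}^m\psi_r(p_{(k)}/\epsilon)\ge1$ for $\epsilon$; since $\psi_r$ is affine in $x^r$ (resp.\ in $\log x$), this linearises to $\epsilon^{-r}\sum_{k=1}^m p_{(k)}^r\le c_r^r+(m-1)d_r^r$ when $r<1/(K-1)$, to the analogous logarithmic relation when $r=0$, and to $\epsilon^{-r}\sum_{k=1}^m p_{(k)}^r\le m-Kr/(r+1)$ when $r\ge1/(K-1)$; dividing by $m$, rewriting in terms of $M_{r,m}(\mathbf p_m)$, and taking an $r$-th root --- with care for signs, since $c_r^r-d_r^r$ changes sign at $r=0$ and the inequality reverses when $r<0$ --- shows that the set of valid $\epsilon$ is $[\epsilon_m,\infty)\cap(0,1)$, with $\epsilon_m$ the $m$-th term in \eqref{eq:F-rep} (resp.\ \eqref{eq:arith}) and with the degenerate case (a non-positive right-hand constant, hence no valid $\epsilon$) matching the conventions $(\cdot)_+$ and $\cdot/0=\infty$ in the statement. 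Each of these sets being a closed up-set, so is their union, whence $F^*_{r,K}(\mathbf p)=\inf\{\epsilon\in(0,1):\mathbf p\in R_\epsilon\}=(\bigwedge_{m=1}^K\epsilon_m)\wedge1$ for $\mathbf p\in(0,\infty)^K$; and if some $p_k=0$ then $f_r(0)=K$ forces $\mathbf p\in R_\epsilon(F^*_{r,K})$ for every $\epsilon$, so $F^*_{r,K}(\mathbf p)=0$, which is exactly what the factor $\id_{\{p_{(1)}>0\}}$ records. This yields \eqref{eq:F-rep} and \eqref{eq:arith}.

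The main obstacle is the first step: handling the positive part and the cap at $1$ simultaneously and recognising that the full $K$-term average is governed by the first $m$ sorted coordinates for some $m$ --- this is precisely the mechanism that turns an averaging-type merging function into an order-statistics-type formula. The sign bookkeeping in the last step (the case split at $r=0$ and the reversal for $r<0$) is straightforward but easy to get wrong, and it is worth double-checking consistency at the boundary $r=1/(K-1)$ (where $c_r\to0$, $d_r\to1$) and the treatment of zero or very small coordinates.
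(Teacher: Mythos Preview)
Your proposal is correct and follows essentially the same route as the paper's proof: both start from the calibrator $f_r$ identified in the proof of Theorem~\ref{th:m1}, use the key observation that for a nonincreasing sequence $(a_k)$ one has $\sum_k (a_k)_+\ge 1\iff\max_m\sum_{k\le m}a_k\ge 1$ to pass from the full positive-part sum to partial sums over the smallest $m$ order statistics, and then rewrite each partial-sum condition as a ratio of $M_{r,m}$ values. You are somewhat more explicit than the paper about why the cap $\wedge 1$ in $f_r$ can be discarded (the paper just writes the rejection region \eqref{eq:Frej} without the cap and works from there) and about the zero-coordinate case, but these are minor elaborations rather than a different argument.
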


Proposition~\ref{th:m2} allows us to compute $F_{r,K}^*(\mathbf{p})$ in time $O(K\log K)$.
This is the time needed for sorting the elements of $\mathbf{p}$;
the rest of the computations takes time $O(K)$ since $M_{r,m+1}(\mathbf p_{m+1})$
can be computed from $M_{r,m}(\mathbf p_m)$ in time $O(1)$,
for any $m\in\{1,\dots,K-1\}$.

The remaining functions $F_{r,K}$ for $r \ge K-1$ are all strictly dominated by the maximum merging function $F_{\infty,K}$,
which will be discussed in Proposition \ref{prop:grand-3} below.
To summarize,
except for the Bonferroni and the maximum p-merging functions,
any other member of the M-family is not admissible among homogeneous symmetric p-merging functions.
Nevertheless, for $r<K-1$,
a simple modification in \eqref{eq:M-equiv} leads to admissible p-merging functions based on the generalized mean,
which has a stronger power than the original members of the M-family.

The (in-)admissibility of $F_{r,K}^*$ for $r = 1$ cannot be studied via Theorem \ref{th:admissible}
since the calibrator is neither strictly convex or strictly concave.
A discussion of the technical challenges in this special case is provided in Remark \ref{rem:strict-con}
in Supplemental Article, Section~\ref{app:a8}.

\subsection{Domination structure within the M-family}\label{sec:mfamily}

Next, we study the domination structure within the M-family of p-merging functions $F_{r,K}$,
which are generally not admissible.
It turns out that most members of the family are not comparable;
however, for $K=2$ or large $r$, there are some domination relationships among the members in the family. 
We note that $M_{s,K}$ and $M_{r,K}$ for $r\ne s$ are not proportional to each other,
and hence the relations of domination among members of the M-family are all strict.

The following proposition gives a simple comparison for $a M_{r,K}$ and $b M_{s,K}$,
where $a,b$ are two positive constants, e.g., $a=b_{r,K}$ and $b=b_{s,K}$.
Using this result, we can compare two p-merging functions that are not precise (but perhaps have simpler forms),
such as the asymptotically precise p-merging functions in \citet{Vovk/Wang:2020Biometrika}.

\begin{proposition}\label{prop:grand-2}
  For $r<s$, $K\ge 2$, and $a,b\in (0,\infty)$, the following statements hold.
  \begin{enumerate}[(i)]
  \item
    $a M_{r,K}$  dominates $b M_{s,K}$ 
    if and only if $a \le b$.
  \item
    $b M_{s,K}$ dominates $a M_{r,K}$ 
    if and only if $rs>0$ and $a K^{-1/r} \ge b K^{-1/s}$.
  \end{enumerate}
\end{proposition}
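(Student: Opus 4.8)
The plan is to reduce both statements to elementary inequalities between generalized means on $[0,1]^K$, exploiting the fact that dominance of $aM_{r,K}$ over $bM_{s,K}$ means the pointwise inequality $aM_{r,K}(\mathbf p)\le bM_{s,K}(\mathbf p)$ for all $\mathbf p\in[0,\infty)^K$. Since both functions are homogeneous, it suffices to test the inequality on a normalized slice, and since they are symmetric and increasing, the extreme configurations are the near-constant vectors and the vectors of the form $(x,1,\dots,1)$ with $x\downarrow 0$. For part (i), I would first note that $aM_{r,K}\le bM_{s,K}$ at the constant vector $\mathbf p=(t,\dots,t)$ gives $at\le bt$, i.e.\ $a\le b$; this proves necessity. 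For sufficiency, if $a\le b$ then $aM_{r,K}(\mathbf p)\le bM_{r,K}(\mathbf p)\le bM_{s,K}(\mathbf p)$ by the classical power-mean inequality $M_{r,K}\le M_{s,K}$ for $r<s$ (see \citet{Hardy/etal:1952}), which is valid on $[0,\infty)^K$. This handles (i) completely.

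For part (ii), the necessity of the two conditions comes from evaluating the desired inequality $bM_{s,K}(\mathbf p)\le aM_{r,K}(\mathbf p)$ at two probe points. Testing at the constant vector again gives $b\le a$, but the sharper obstruction comes from the vector $\mathbf p=(x,1,\dots,1)$: as $x\downarrow 0$ we have $M_{r,K}(\mathbf p)\to (\tfrac{K-1}{K})^{1/r}$ if $r>0$, and $M_{r,K}(\mathbf p)\to 0$ if $r\le 0$; similarly for $s$. If $r<0<s$, or $r<s\le 0$ with one of them strictly negative, one checks the limit forces a contradiction unless $rs>0$ — in the mixed-sign case the ratio $M_{s,K}/M_{r,K}$ blows up, and in the both-nonpositive case the small component dominates the wrong side. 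This is where I expect the bookkeeping to be most delicate: carefully ruling out the boundary sign cases ($r=0$ or $s=0$, or $r$ and $s$ of opposite sign) by choosing the right sequence of probe vectors. Assuming $rs>0$, the inequality at $(x,1,\dots,1)$ with $x\downarrow 0$ reduces, after taking limits and a short computation, exactly to $bK^{-1/s}\le aK^{-1/r}$ (one factors out the "$K-1$ of the entries equal to $1$" contribution and is left with comparing the normalizing constants $K^{-1/r}$ and $K^{-1/s}$).

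For sufficiency in (ii), assume $rs>0$ and $aK^{-1/r}\ge bK^{-1/s}$. By homogeneity I may restrict to $\mathbf p\in[0,1]^K$, or indeed rescale so that $\max_k p_k\le 1$. The key observation is that, for $r,s$ of the same sign, the function $\mathbf p\mapsto M_{s,K}(\mathbf p)/M_{r,K}(\mathbf p)$ on $(0,\infty)^K$ attains its extreme value (the one relevant to the inequality direction) precisely at the boundary-type vectors $(x,\dots,x,1,\dots,1)$, and among those the worst case is one small coordinate and $K-1$ coordinates equal, pushed to the limit $x\to 0$. So I would prove the reduction: $\sup_{\mathbf p\in(0,1]^K} M_{s,K}(\mathbf p)/M_{r,K}(\mathbf p) = K^{1/r-1/s}$, which gives $bM_{s,K}\le bK^{1/r-1/s}M_{r,K}\le aM_{r,K}$ under the hypothesis. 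This supremum computation is a Lagrange-multiplier / monotonicity argument on the sphere-like level sets, and it is essentially already recorded in the excerpt as inequality \eqref{eq:compare-p1} ($K^{1/s-1/r}b_{r,K}\le b_{s,K}\le b_{r,K}$): the content is the same power-mean comparison, so I would cite \eqref{eq:compare-p1} and \citet{Hardy/etal:1952} rather than re-deriving it. The main obstacle, as flagged above, is not the main power-mean estimate but the clean enumeration of the sign cases in the necessity direction and making sure the probe vectors chosen actually lie in $[0,\infty)^K$ and satisfy the (trivially satisfied, since we are comparing unbounded-above auxiliary functions, not the truncated p-merging functions) constraints of Proposition \ref{prop:grand-2}'s statement.
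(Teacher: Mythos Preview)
Your treatment of part (i) and of the sufficiency direction in part (ii) matches the paper: both rely on the power-mean inequalities $M_{r,K}\le M_{s,K}$ and $K^{1/r}M_{r,K}\ge K^{1/s}M_{s,K}$ from \citet{Hardy/etal:1952}, and the constant vector gives the right necessary condition in (i).

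There is, however, a genuine gap in your necessity argument for (ii) when $0<r<s$. Your probe vector $(x,1,\dots,1)$ with $x\downarrow0$ yields $M_{r,K}(x,1,\dots,1)\to((K-1)/K)^{1/r}$ and similarly for $s$, so the limiting inequality you extract is $b((K-1)/K)^{1/s}\le a((K-1)/K)^{1/r}$, i.e.\ $b/a\le((K-1)/K)^{1/r-1/s}$. This is \emph{strictly weaker} than the stated condition $b/a\le K^{1/s-1/r}=(1/K)^{1/r-1/s}$ for all $K\ge3$, since $(K-1)/K>1/K$. Your claim that ``one factors out the $K-1$ entries equal to $1$ and is left with $K^{-1/r}$ vs $K^{-1/s}$'' is simply false here; the $(K-1)$ does not disappear. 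The correct probe in this regime is the opposite extreme, $(1,0,\dots,0)$, for which $M_{r,K}=K^{-1/r}$ exactly, and this is what the paper uses. For $r<s<0$ your probe does happen to work, but only through the leading-order asymptotics $M_{r,K}(x,1,\dots,1)\sim xK^{-1/r}$ as $x\downarrow0$, not through the bare limit (which is $0$); the paper instead uses $(1,1/\epsilon,\dots,1/\epsilon)$ with $\epsilon\downarrow0$, which gives $K^{-1/r}$ directly. So your instinct that the extremal configuration has ``one coordinate at a different scale'' is right, but in the positive case you have the roles reversed: it is one coordinate nonzero and $K-1$ coordinates at $0$, not one small coordinate and $K-1$ at $1$.
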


Proposition \ref{prop:grand-2} immediately implies that the asymptotically precise p-merging functions ($K\to\infty$)
in Table~1 of \citet{Vovk/Wang:2020Biometrika}
do not dominate each other.

\begin{proposition}\label{prop:grand-3}
 Suppose $r\ne s$.
 If $K = 2$, 
 $F_{r,K}$ is dominated by $F_{s,K}$ if and only if  $1\le r<s$
 or $s< r\le 1$.  
 If $K\ge 3$, 
 $F_{r,K}$ is dominated by $F_{s,K}$
 if and only if $K-1\le r<s  $. 
\end{proposition}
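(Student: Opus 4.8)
The plan is to strip off the truncation at~$1$, reduce to a comparison of the underlying generalized means handled by Proposition~\ref{prop:grand-2}, and then read off the answer from the explicit formulas for $b_{r,K}$ in Proposition~\ref{prop:grand}. The first step is the elementary observation that, since $M_{r,K}$ and $M_{s,K}$ are homogeneous, $F_{r,K}$ is dominated by $F_{s,K}$ (that is, $F_{s,K}\le F_{r,K}$) if and only if $b_{s,K}M_{s,K}\le b_{r,K}M_{r,K}$ everywhere on $(0,\infty)^K$: one direction is immediate from truncation, and for the other one evaluates the inequality $F_{s,K}\le F_{r,K}$ at $\lambda\mathbf p$ with $\lambda\downarrow0$, so that neither cap is active, and divides by~$\lambda$.

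Next I would split according to whether $r<s$ or $s<r$ and apply Proposition~\ref{prop:grand-2} with $a=b_{r,K}$ and $b=b_{s,K}$. If $r<s$, part~(ii) gives that $b_{s,K}M_{s,K}\le b_{r,K}M_{r,K}$ holds exactly when $rs>0$ and $K^{-1/r}b_{r,K}\ge K^{-1/s}b_{s,K}$; but the opposite inequality $K^{-1/r}b_{r,K}\le K^{-1/s}b_{s,K}$ is just a rearrangement of the left-hand bound in \eqref{eq:compare-p1}, so the condition collapses to $rs>0$ together with $K^{-1/r}b_{r,K}=K^{-1/s}b_{s,K}$. If $s<r$, part~(i) (applied with the two indices interchanged) gives that $b_{s,K}M_{s,K}\le b_{r,K}M_{r,K}$ holds exactly when $b_{s,K}\le b_{r,K}$, and since $t\mapsto b_{t,K}$ is non-increasing (by \eqref{eq:compare-p1}, together with its continuity away from~$0$ from Proposition~\ref{prop:grand}) this is equivalent to $b_{s,K}=b_{r,K}$.

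It then remains to understand the monotonicity of the two maps $r\mapsto b_{r,K}$ and $r\mapsto\beta_r:=K^{-1/r}b_{r,K}$. On the range $r\ge1/(K-1)$ everything is explicit and elementary: $b_{r,K}=((r+1)\wedge K)^{1/r}$ is strictly decreasing, $\beta_r=1$ for $r\ge K-1$, and for $1/(K-1)\le r<K-1$ one has $\beta_r=((r+1)/K)^{1/r}<1$ with $\frac{\mathrm{d}}{\mathrm{d}r}\log\beta_r>0$; for $K=2$ one additionally uses $b_{r,2}\equiv2$, hence $\beta_r=2^{1-1/r}$, on $(-\infty,1)$, so that $b_{\cdot,2}$ is constant on $(-\infty,1]$ while $\beta$ is strictly increasing on $(-\infty,0)$ and on $(0,1)$. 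Granting in addition that for $K\ge3$ the map $r\mapsto b_{r,K}$ is strictly decreasing on all of $\R$ and $r\mapsto\beta_r$ is strictly increasing on $(-\infty,0)$ and on $(0,K-1)$, the conclusion follows: for $K\ge3$ the case $s<r$ cannot occur (it would force $b_{s,K}=b_{r,K}$), while the case $r<s$ forces $\beta_r=\beta_s$, which by strict monotonicity of $\beta$ happens only when $r,s\ge K-1$, giving $K-1\le r<s$; for $K=2$ the case $s<r$ yields $b_{s,2}=b_{r,2}$, i.e.\ $s<r\le1$, and the case $r<s$ yields $\beta_r=\beta_s$, i.e.\ $1\le r<s$, and the union of these is the stated condition.

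The hardest step will be the strictness used in the last paragraph on the non-explicit regime $r<1/(K-1)$ for $K\ge3$, where the only available description of $b_{r,K}$ is $b_{r,K}=1/M_{r,K}(c_r,d_r,\dots,d_r)$ with $c_r$ pinned down only by the implicit equation preceding \eqref{eq:def-dr}. Showing that $b_{r,K}$ and $\beta_r$ are genuinely strictly monotone there requires control of $r\mapsto c_r$ (continuity and the monotone structure of its defining equation), which I expect to rest on the analysis of that equation in \citet{Jakobsons/etal:2016}, or else on the strict form of the generalized-mean inequalities behind \eqref{eq:compare-p1}. Everything else is bookkeeping with Propositions~\ref{prop:grand} and~\ref{prop:grand-2}.
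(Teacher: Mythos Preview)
Your approach is essentially the paper's: strip off the cap at $1$ by homogeneity, apply Proposition~\ref{prop:grand-2} with $a=b_{r,K}$ and $b=b_{s,K}$, and then reduce everything to the strict monotonicity of $r\mapsto b_{r,K}$ and of $r\mapsto K^{-1/r}b_{r,K}$, with the explicit regime $r\ge 1/(K-1)$ handled by Proposition~\ref{prop:grand} and the implicit regime $r<1/(K-1)$ left to a ``cumbersome calculation'' (the paper's own wording) of $c_r$. The only cosmetic difference is that the paper packages the passage from local non-comparability to global admissibility in a separate Lemma~\ref{lem:compare}, whose proof is exactly your monotonicity argument; your direct formulation is arguably cleaner, and both versions leave the same hard step---strictness on the implicit regime---at the same level of detail.
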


As a consequence of Proposition \ref{prop:grand-3},
in addition to $F_{\infty,K}$, the members $F_{r,K}$ for $r<K-1$ are admissible within the M-family if $K\ge 3$, and the members for $r\in [K-1,\infty)$ are not.
In the simple case $K=2$,
the only two admissible members in the M-family are $F_{-\infty,2}$ and $F_{\infty,2}$,
and the arithmetic average $F_{1,2}$ is the worst, as it is strictly dominated by every other member of the M-family.

\section{Magnitude of improvement}
\label{sec:improve}

By focusing on some of the most important cases,
in the following proposition
(proved in Supplemental Article, Section~\ref{app:ratio})
we calculate four ratios measuring the improvement of the dominating p-merging functions
over the standard ones in Theorems~\ref{th:o1} and~\ref{th:m1}.

\begin{proposition}\label{pr:ratio}
  For $K\ge 3$, we have
  \begin{multline*}
    \inf_{\mathbf p\in(0,1]^K} \frac{ F^*_{1,K}(\mathbf p)}{F_{1,K}(\mathbf p)}
    =
    \inf_{\mathbf{p}\in(0,1]^K} \frac{F^*_{0,K}(\mathbf{p})}{ F_{0,K}(\mathbf{p})}
    =
    0,
    \qquad
    \inf_{\mathbf p\in(0,1]^K} \frac{F^*_{-1,K}(\mathbf p)}{F_{-1,K}(\mathbf p)} = 1-(K-1)c_{-1},\\
    \min_{\mathbf{p}\in(0,1]^K}
    \frac{H^*_K(\mathbf{p})}{H_K(\mathbf{p})}
    =
    \min \left\{t>0: \sum_{k=1}^K \frac{\id_{\{t \ge k/K\}}} {\lceil k/t\rceil}\ge 1\right\}
    =:
    \gamma_K.
  \end{multline*}
  Moreover, $c_{-1}\sim 1/(K\log K)$ and $\gamma_K \sim 1/\log K$ as $K\to\infty$.
\end{proposition}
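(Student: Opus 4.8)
The plan is to compute each of the four ratios separately, exploiting the explicit descriptions obtained in Theorems~\ref{th:o1} and~\ref{th:m1} and Proposition~\ref{th:m2}, and then to analyze the two asymptotic statements by elementary estimates. For the first two ratios, I would exhibit a sequence of inputs along which $F^*_{1,K}/F_{1,K}$ (resp.\ $F^*_{0,K}/F_{0,K}$) tends to $0$. The natural candidate is $\mathbf{p}$ with one small coordinate $\delta\downarrow 0$ and the other $K-1$ coordinates equal to some fixed value near the relevant threshold $d_r$. Since $F^*_{r,K}$ treats any coordinate exceeding $\epsilon d_r$ as if it were $\epsilon d_r$ (the characterization \eqref{eq:M-equiv}) while $F_{r,K}$ keeps using the actual arithmetic (resp.\ geometric) mean, one coordinate going to $0$ forces $F^*_{r,K}(\mathbf p)$ to stay bounded away from $0$ through the capped-mean equation, whereas $F_{r,K}(\mathbf p)\to 0$ because the unmodified mean $M_{r,K}$ has no lower bound; hence the ratio blows down to $0$. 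I would make this precise using \eqref{eq:F-rep} with $m=1$ versus the full $m=K$ term. For the harmonic ratio $\inf F^*_{-1,K}/F_{-1,K}$, the situation is different because the harmonic mean $M_{-1,K}$ is itself bounded below by $K^{-1}\min$ up to a constant; here I would use the representation of $R_\epsilon(F_{-1}^*)$ in \eqref{eq:Frej} together with the explicit $b_{-1,K}$ from Proposition~\ref{prop:grand}, and argue that the worst case occurs at the ``corner'' configuration $(c_{-1}\epsilon, d_{-1}\epsilon,\dots,d_{-1}\epsilon)$, which is exactly where the cap in $f_{-1}$ first binds; evaluating both functions there yields the value $d_{-1}=1-(K-1)c_{-1}$.

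For the Hommel ratio, I would start from $H_K=\ell_K\bigwedge_{k}G_{k,K}$ and the calibrator description \eqref{eq:hprime2} of $H^*_K$. Writing $t=H^*_K(\mathbf p)/H_K(\mathbf p)$, the infimum over $\mathbf p$ of this ratio is attained on configurations where $\mathbf p$ lies on the boundary of a rejection region of $H_K$, i.e.\ $p_{(k)}=k\alpha/K$ (with $\alpha=H_K(\mathbf p)$) and the remaining coordinates large; plugging such a $\mathbf p$ into the defining inequality $\frac1K\sum_k f(p_k/\epsilon)\ge1$ for $H^*_K$ and solving for the smallest admissible $\epsilon=t\alpha$ gives precisely the combinatorial condition $\sum_{k=1}^K \id_{\{t\ge k/K\}}/\lceil k/t\rceil\ge 1$. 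I would verify that the minimum over all $\mathbf p$ reduces to the minimum over $k\in\{1,\dots,K\}$ of these ``order-statistic witnesses'', because $H_K$ is the lower envelope of the $G_{k,K}$ and $H^*_K$ is monotone, so only the extreme points $(\alpha/K,2\alpha/K,\dots,K\alpha/K,\text{large},\dots)$ matter. That the resulting $t$ is a genuine minimum (not just an infimum) follows from the finiteness of the set defining $\gamma_K$.

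Finally, for the asymptotics: $c_{-1}\sim 1/(K\log K)$ I would read off from the defining equation for $c_{-1}$ in Section~\ref{sec:brk}, namely $\frac{1-Kc}{Kc(1-(K-1)c)}=\log(1/c-(K-1))$; substituting the ansatz $c=1/(K\log K)\cdot(1+o(1))$ makes the left side $\sim\log K$ and the right side $\sim\log(K\log K)\sim\log K$, which pins down the leading order, and a standard bootstrap/monotonicity argument upgrades this to $c_{-1}\sim 1/(K\log K)$. For $\gamma_K\sim 1/\log K$, I would analyze $\sum_{k=1}^K \id_{\{t\ge k/K\}}/\lceil k/t\rceil$ as a function of $t$: for $t$ of order $1/\log K$ the sum is over $k\le tK\sim K/\log K$ terms, and $\sum_{k\le tK}1/\lceil k/t\rceil \approx t\sum_{k\le tK}1/k \approx t\log(tK)\sim t\log K$, so the threshold where this crosses $1$ is $t\sim 1/\log K$. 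The main obstacle I anticipate is the Hommel ratio: justifying rigorously that the infimum over the continuum of inputs collapses to the finite combinatorial minimum $\gamma_K$ requires a careful case analysis of which ``slack'' coordinates can be pushed toward $1$ without decreasing the ratio, and controlling the ceiling function $\lceil\cdot\rceil$ in \eqref{eq:hprime2} along the way; the asymptotic $\gamma_K\sim1/\log K$ also needs the ceiling-induced error terms to be shown negligible, which is the other delicate point.
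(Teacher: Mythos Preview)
Your treatment of the Hommel ratio and of the two asymptotic statements is essentially the paper's approach and is sound. The genuine gaps are in the $M$-family ratios.

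For $r=1$, the configuration with one small coordinate $\delta$ and $K-1$ coordinates near $d_1=1$ does \emph{not} drive the ratio to $0$. Since $d_1=1$, in \eqref{eq:arith} the denominator $(1-K/(2m))_+$ vanishes for $m\le K/2$, so every surviving term involves a partial arithmetic mean containing at least $\lceil K/2\rceil-1$ of the large coordinates; both $F_{1,K}$ and $F^*_{1,K}$ therefore converge to the same positive limit, and the ratio tends to $1$. Your description of the mechanism (``$F^*$ stays bounded away from $0$, $F\to 0$'') is also backwards, since we need $F^*_{r,K}$ to be \emph{small} relative to $F_{r,K}$. The paper uses the opposite configuration $(\epsilon,\dots,\epsilon,1)$ with $K-1$ small coordinates: then $F_{1,K}\ge 2/K$ because of the single large coordinate, while the $m=K-1$ term of \eqref{eq:arith} gives $F^*_{1,K}=O(\epsilon)$. (For $r=0$ your configuration $(\delta,1,\dots,1)$ does happen to work and coincides with the paper's, but because both functions tend to $0$ at different rates, not for the reason you give.)

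For $r=-1$, the ``corner'' $(c_{-1}\epsilon,d_{-1}\epsilon,\dots,d_{-1}\epsilon)$ lies on the boundary of \emph{both} $R_\epsilon(F_{-1,K})$ and $R_\epsilon(F^*_{-1,K})$, since no coordinate exceeds $d_{-1}\epsilon$ and the truncation in \eqref{eq:Frej} is inactive there; evaluating both functions gives $\epsilon$, hence ratio $1$, not $d_{-1}$. Two ingredients are missing: a \emph{lower bound} $F^*_{-1,K}/F_{-1,K}\ge d_{-1}$ valid for every $\mathbf p$ (the paper gets this by a direct computation from \eqref{eq:F-rep}, comparing $\sum_{k\le m}p_{(k)}^{-1}$ to $\sum_{k\le K}p_{(k)}^{-1}$), and a configuration realizing the infimum. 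The latter is again $(\epsilon,1,\dots,1)$ as $\epsilon\downarrow 0$: the $m=1$ term of \eqref{eq:F-rep} gives $F^*_{-1,K}\sim\epsilon/c_{-1}$, while $F_{-1,K}\sim\epsilon\,(c_{-1}^{-1}+(K-1)d_{-1}^{-1})$, and the ratio of these is exactly $d_{-1}=1-(K-1)c_{-1}$.
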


In Proposition \ref{pr:ratio},
there is a sharp contrast between the greatest improvement of $F^*_{-1,K}$ and that of $H^*_K$
over their standard counterparts:
asymptotically as $K\to\infty$,
$F^*_{-1,K}$ can improve $F_{-1,K}$ only by a factor of $1-1/\log K\to1$,
while $H^*_K$ can improve $H_K$ by a significant factor of $1/\log K\to0$.
This observation is interesting especially seeing that $H_K$ and $F_{-1,K}$ perform similarly in simulation scenarios
(see, e.g., the simulation studies in \citet{Wilson:2020} and \citet{Chen:2020}).
Moreover, since $H_K=\ell_K S_K$ and
$
  \gamma_K\sim 1/\log K \sim 1/\ell_K
$,
$H^*_K$ performs similarly to the Simes function $S_K$ for some input p-values $\mathbf p$,
e.g., those with order statistics close to $(1,\dots,K)$ times a constant
(as can be seen from \eqref{eq:Hk-improve} in Supplemental Article),
a situation that likely happens if the p-values are generated iid from a flat density around $0$.
This is remarkable as we see in Theorem \ref{thm:Simes} that all symmetric p-merging functions are dominated by $S_K$.
See also the numerical illustrations in Section \ref{sec:9}.

\section{Simulation results}
\label{sec:9}

\begin{figure}
  \begin{center}
  \includegraphics[width=0.60\textwidth]{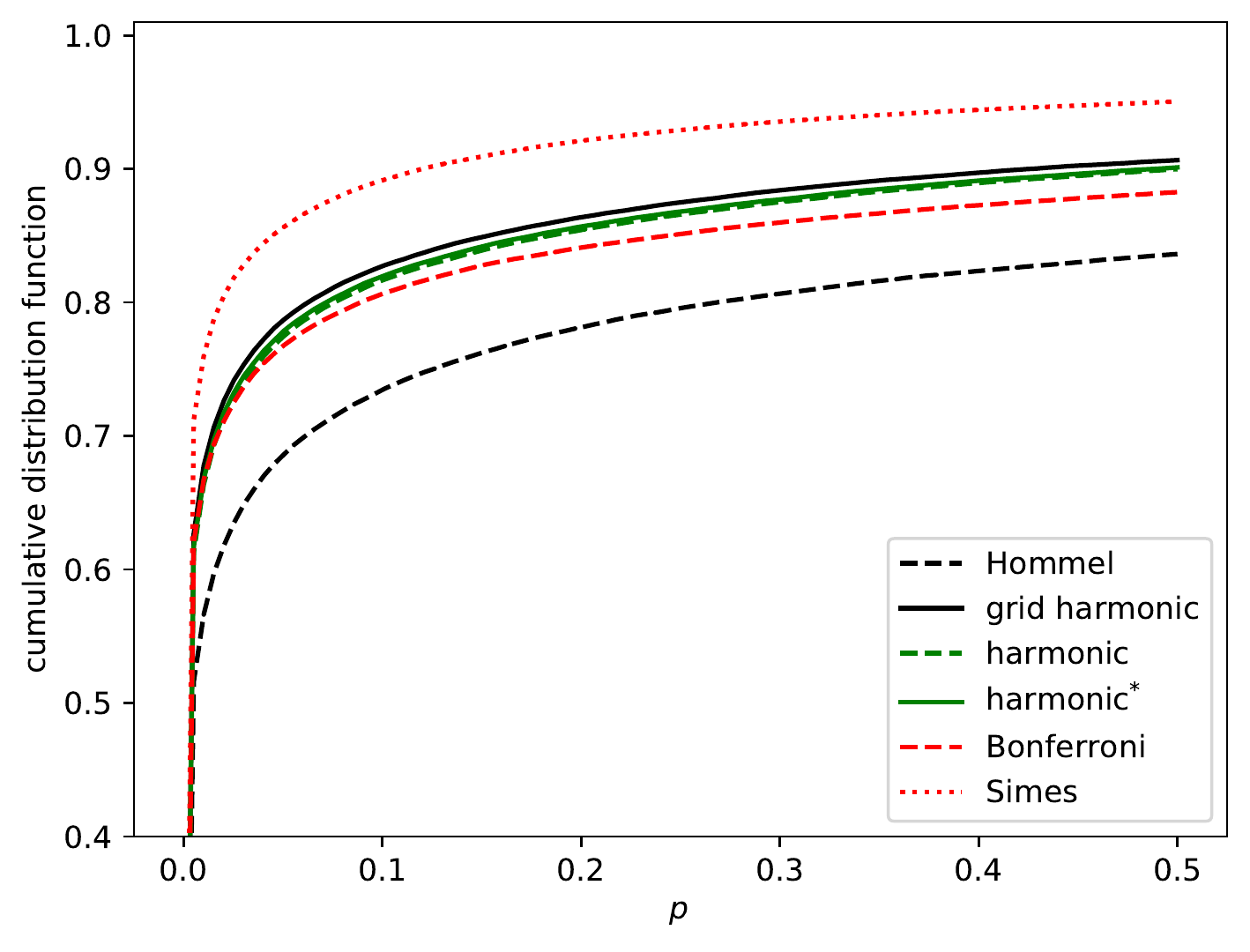}
  \end{center}
  \caption{Cumulative distribution functions of $F(P_1,\dots,P_K)$ for correlated z-tests
    (with correlation $0.9$ between the vast majority of observations).}
  \label{fig:cdf}
\end{figure}

In this section, we compare the performance of p-merging functions via simulation.
First, as a simple illustration,
in Figure \ref{fig:cdf} we plot the cumulative distribution functions of $F(P_1,\dots,P_K)$,
where $F$ is one of $H_K$, $H_K^*$, $F_{-1,K}$, $F_{-1,K}^*$, Bonferroni, or $S_K$.
The Simes function $S_K$ is used as a lower bound because it is the minimum of all symmetric p-merging functions (Theorem \ref{thm:Simes}).
The random variables $P_1,\dots,P_K$ are generated following \citet[Section 8]{Vovk/Wang:E},
essentially using correlated z-tests.
Overall we generate $K=10^6$ observations $x$ from the Gaussian models $N(\mu,1)$
in such a way that the correlation between any pair of observations is $0.9$
(the correlation $0.9$ is chosen for a better visibility of the comparison;
other choices of the correlation give qualitatively similar results,
except for the Bonferroni function, which performs better for small correlations when testing the global null;
see Section~\ref{app:simulation} in Supplemental Article).
An exception is the last observation, whose correlation with the other observations is $-0.9$.
This violates the standard $\text{MTP}_2$ assumption \citep{sarkar1998some},
and so the application of the Simes test is not justified.
(It is not justified anyway unless we \emph{know} that $\text{MTP}_2$ holds;
such knowledge is rare in practice.)

The null hypotheses are $N(0,1)$ and the alternatives are $N(-5,1)$.
First we generate $K_1=10^3$ observations from the alternative distribution $N(-5,1)$
and then $K_0:=K-K_1$ observations from the null distribution $N(0,1)$.
As the base p-values we take
$
  P(x)
  :=
  N(x)
$,
where $N$ is the standard Gaussian distribution function.
The empirical cumulative distribution function of $F(P_1,\dots,P_K)$ is computed via an average of $10^5$ independent simulations. 
A larger cumulative distribution function indicates greater power.

The Bonferroni and Hommel methods appear the worst and, of course, Simes is the best
(but remember that it is not a valid method in our context).
The other methods are roughly midway between these two.
We can hardly distinguish between $F_{-1,K}$ and $F^*_{-1,K}$,
but the grid harmonic method $H^*_K$ performs somewhat better.
In agreement with Proposition~\ref{pr:ratio},
the improvement of $H^*_K$ over $H_K$
is much more significant than the improvement of $F^*_{-1,K}$ over $F_{-1,K}$.
Additional simulation results for discrete p-values are included
in Section~\ref{app:simulation} of Supplemental Article.

Next let us see what our procedures give for multiple hypothesis testing.
We will use a general procedure of \citet{Genovese/Wasserman:2004} and \citet{Goeman/Solari:2011},
which we shall refer to as the GWGS procedure,
and see how the new p-merging functions improve the performance over the classic ones.

Let $F=(F_k)_{k=1}^K$ be a family of symmetric p-merging functions,
which will be chosen from the ones presented in Figure \ref{fig:cdf}.
Each $F_k$ is a function of $k$ p-variables,
defined in the same way as its counterpart in Figure \ref{fig:cdf}
but replacing $K$ p-values by $k$ p-values as its input.
For any input p-values $\mathbf{p}=(p_1,\dots,p_K)$ and any non-empty subset $I$ of $\{1,\dots,K\}$,
we will write $F_{\mathbf{p}}(I)$ for the value of $F_{|I|}$ on a sequence consisting of $\left|I\right|$ elements $p_i$, $i\in I$
(in any order).
With such an $F$ and input p-values $\mathbf{p}$ we associate the array
\begin{equation}\label{eq:D}
  \DM_{l,j}
  :=
  \max_{I:\left|R\setminus I\right|<j}
  F_{\mathbf{p}}(I),
  \quad
  l\in\{1,\dots,K\},
  \quad
  j\in\{1,\dots,l\},
\end{equation}
where $R\subseteq\{1,\dots,K\}$ is a set of indices of $l$ smallest p-values among $p_1,\dots,p_K$
(such a set $R$ may not be unique if there are ties among $p_1,\dots,p_K$,
but $\DM_{l,j}$ does not depend on the choice of $R$).
We regard $\DM$ as a $K\times K$ matrix whose elements above the main diagonal are undefined
and call it the \emph{(GWGS) discovery matrix};
this is our representation of the GWGS procedure.
A small value of $\DM_{l,j}$ is evidence for the statement
``there are at least $j$ true discoveries among the $l$ hypotheses (with the smallest p-values) that we choose to reject'';
namely, $\DM_{l,j}$ is a valid p-value for testing the negation of this statement.
These p-values are jointly valid in the sense that,
for each confidence level $1-\alpha$,
with probability at least $1-\alpha$,
the maximum number $j$ satisfying $\DM_{l,j} \le \alpha$ is a lower bound on the number of true discoveries among $l$ smallest p-values
for all $l$ simultaneously.

\begin{algorithm}[bt]
  \caption{Discovery matrix}
  \label{alg:DM}
  \begin{algorithmic}[1]
    \Require
      A  family  of merging functions $F$.
    \Require
      An increasing sequence $\mathbf{p}$ of p-values $p_1\le\dots\le p_K$.
    \For{$l=1,\dots,K$}
      \For{$j=1,\dots,l$}
        \State $S_{j,l}:=\{j,\dots,l\}$
        \State $\DM'_{l,j}:=F_{\mathbf{p}}(S_{j,l})$
        \For{$i=K,\dots,l+1$}
          \State $p := F_{\mathbf{p}}(S_{j,l}\cup\{i,\dots,K\})$
          \If{$p > \DM'_{l,j}$}
            \State $\DM'_{l,j} := p$
          \EndIf
        \EndFor
      \EndFor
    \EndFor
  \end{algorithmic}
\end{algorithm}

See the recent paper \citet{Goeman/etal:arXiv1901} for an interesting justification
of the GWGS procedure (it is the only admissible, in some sense, procedure
with the true discovery guarantee).
The goal of the GWGS procedure is somewhat similar to that of the partial conjunction test
(see, e.g., \citet{Wang/Owen:2019})
looking for evidence that at least $j$ out of $l$ null hypotheses are false.
The difference is that a GWGS matrix is jointly valid for all $j$ and $l$ (as described earlier),
and the $l$ null hypotheses are those with the smallest p-values.

Algorithm~\ref{alg:DM} computes the modification
\begin{equation*}
  \DM'_{l,j}
  :=
  \max_{I:\left|R\setminus I\right|=j-1}
  F_{\mathbf{p}}(I),
  \quad
  l\in\{1,\dots,K\},
  \quad
  j\in\{1,\dots,l\},
\end{equation*}
of the discovery matrix~\eqref{eq:D}.
It assumes, without loss of generality, that the input p-values are given in the increasing order.
We will usually have $\DM=\DM'$, but unlike $\DM_{l,j}$,
the function $\DM'_{l,j}$ does not need to be monotonically increasing in $j$.
(The monotonicity may be violated when, e.g., $F$ represents the Bonferroni p-merging functions.)
But even in such unusual cases it is always true that
$
  \DM_{l,j}
  =
  \max_{j'\le j}
  \DM'_{l,j'}
$.

\begin{figure}
    \includegraphics[width=0.32\textwidth]{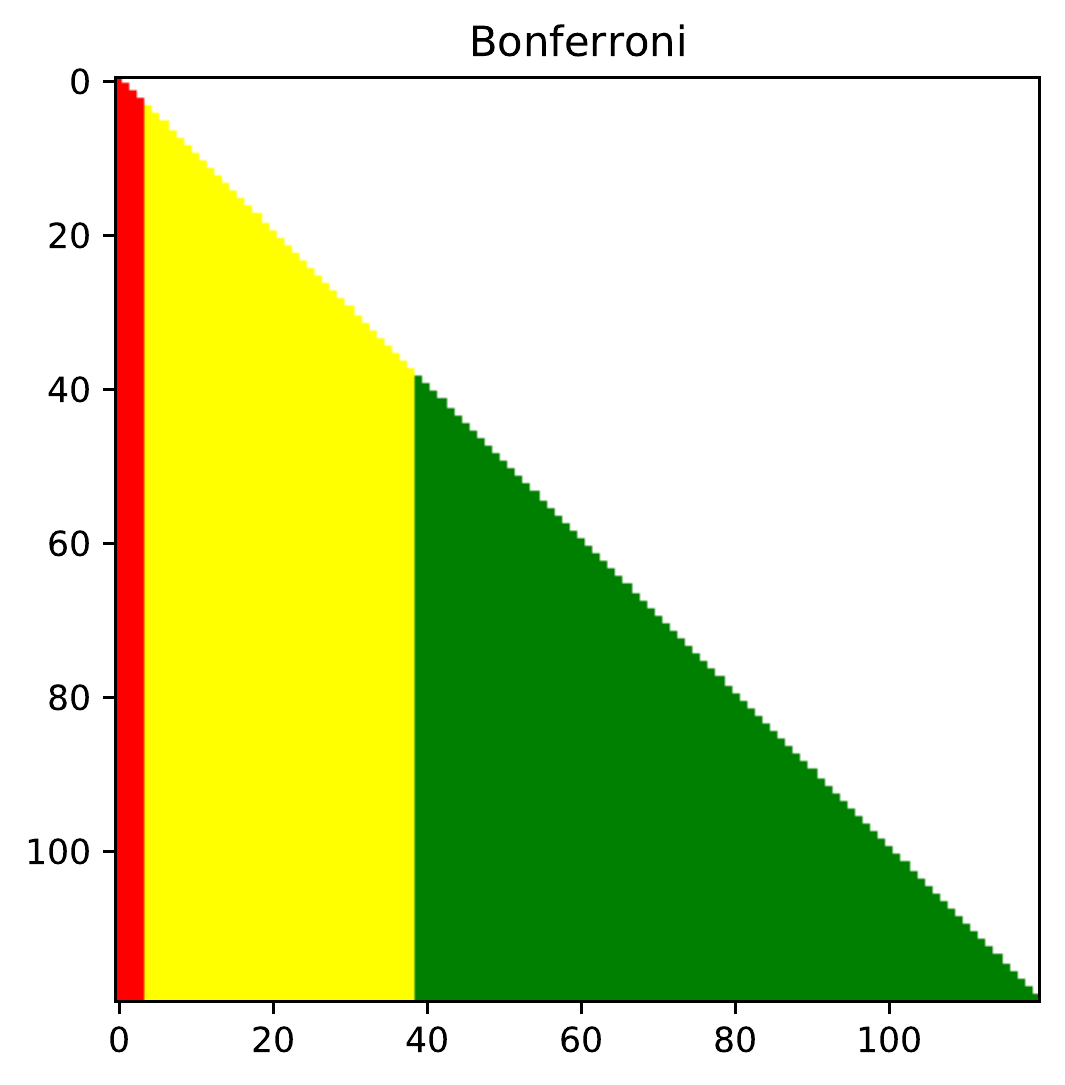}
    \includegraphics[width=0.32\textwidth]{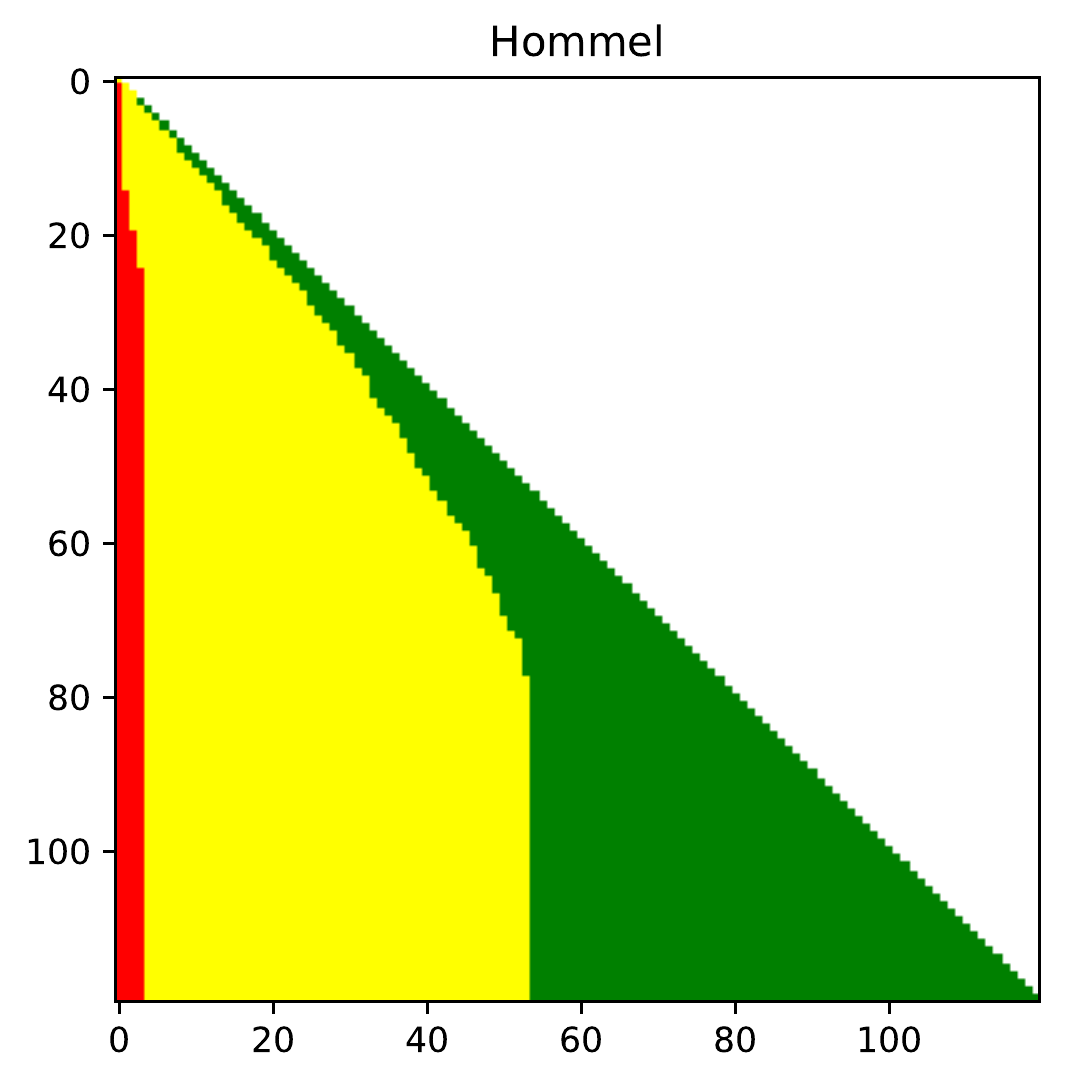}
    \includegraphics[width=0.32\textwidth]{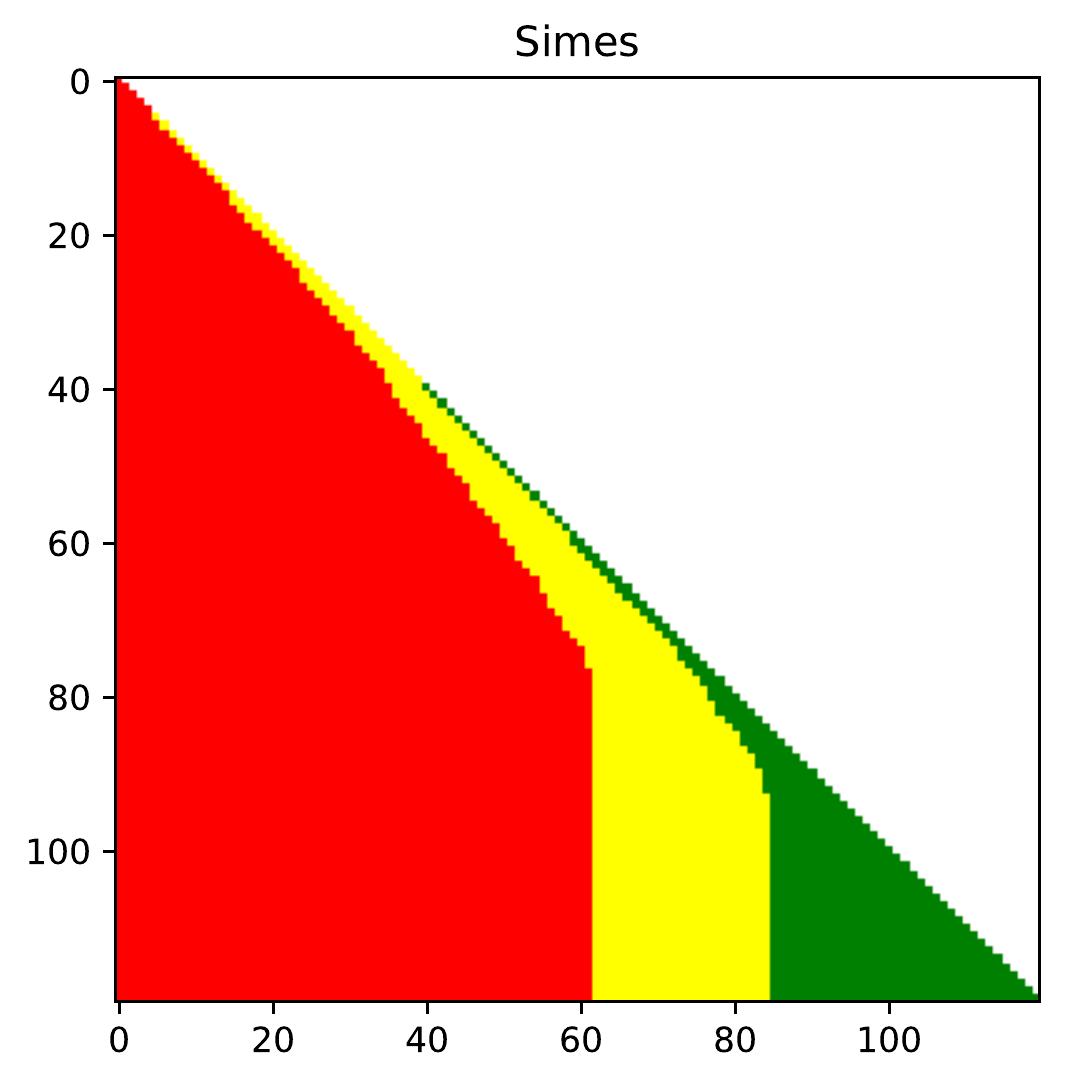} \\
    \includegraphics[width=0.32\textwidth]{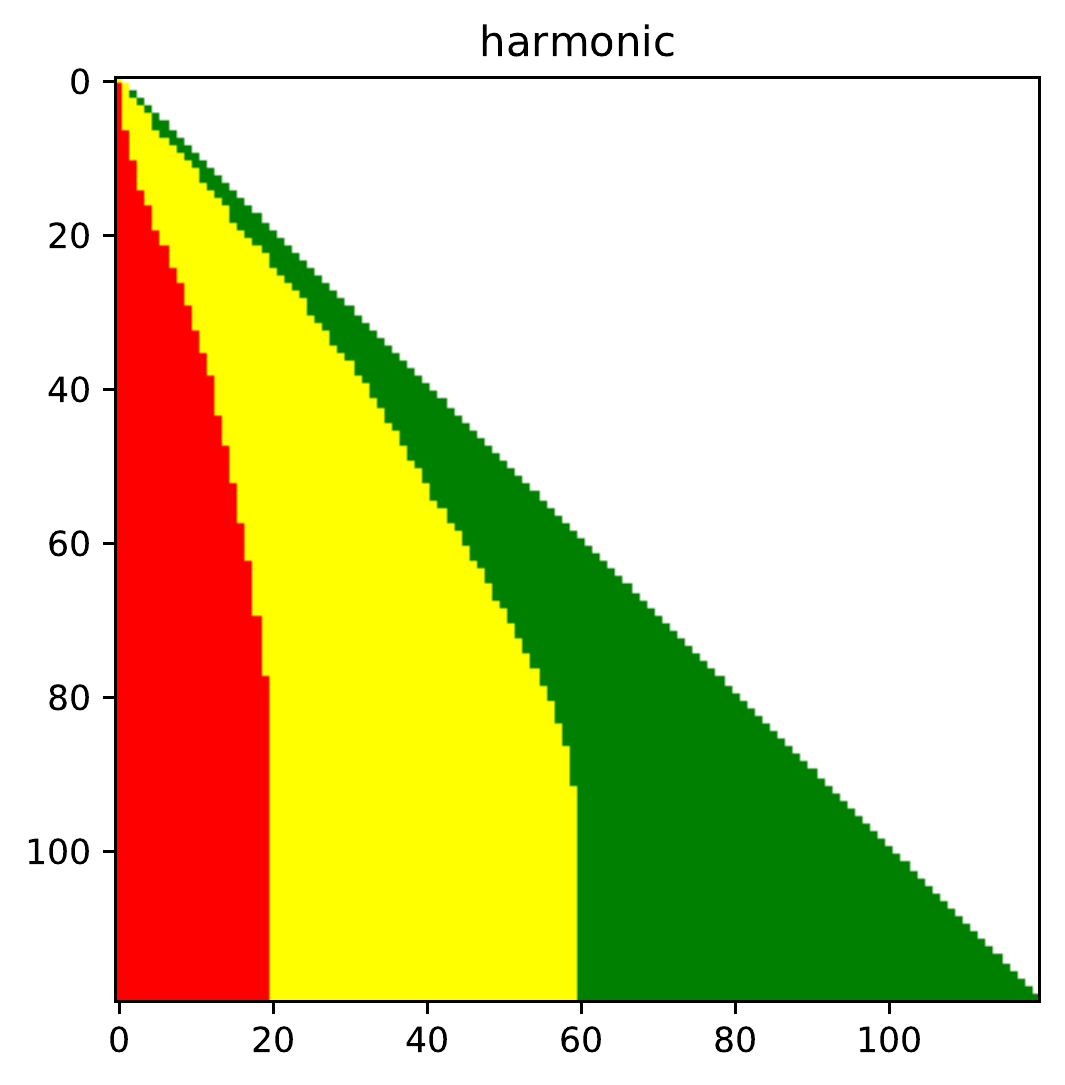}
    \includegraphics[width=0.32\textwidth]{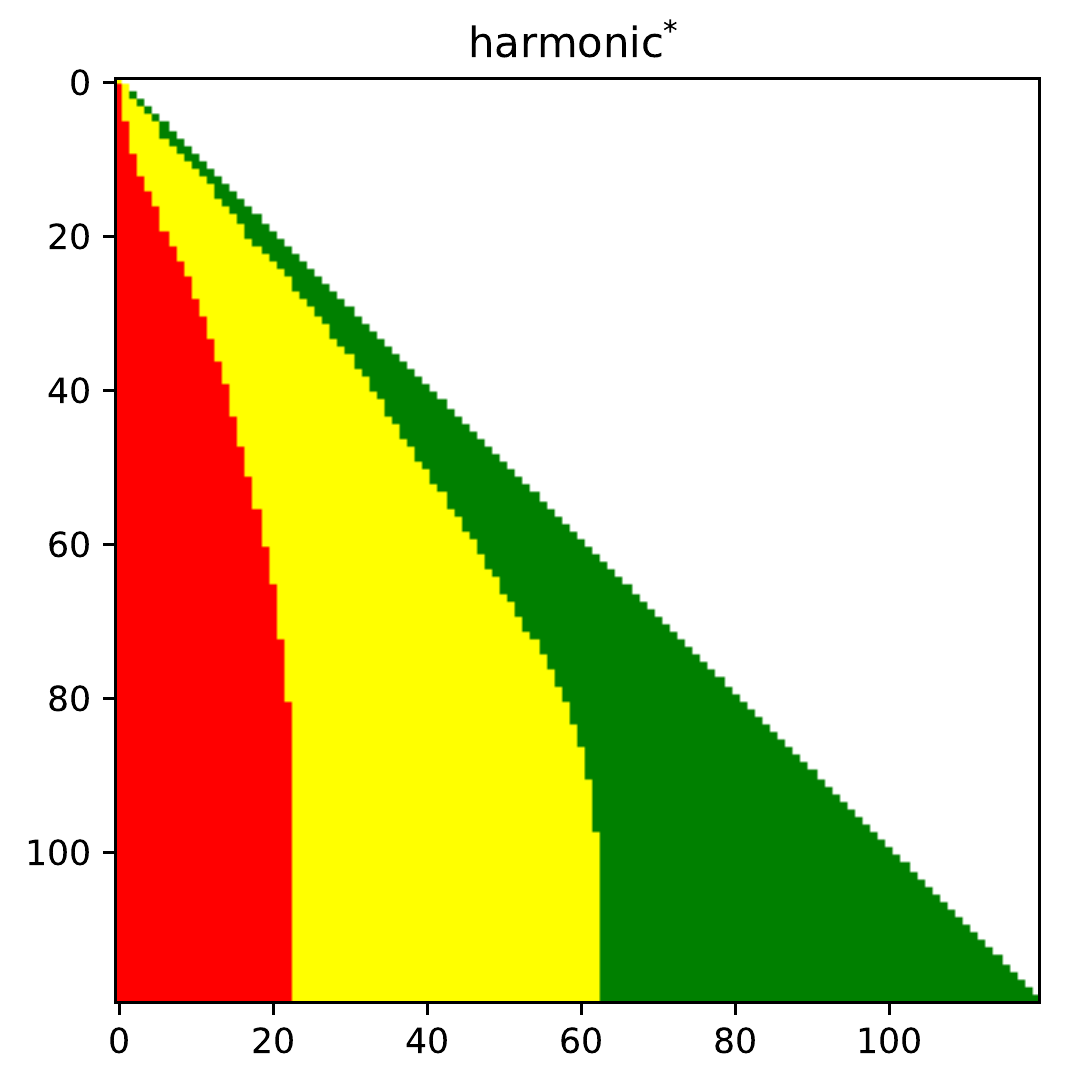}
    \includegraphics[width=0.32\textwidth]{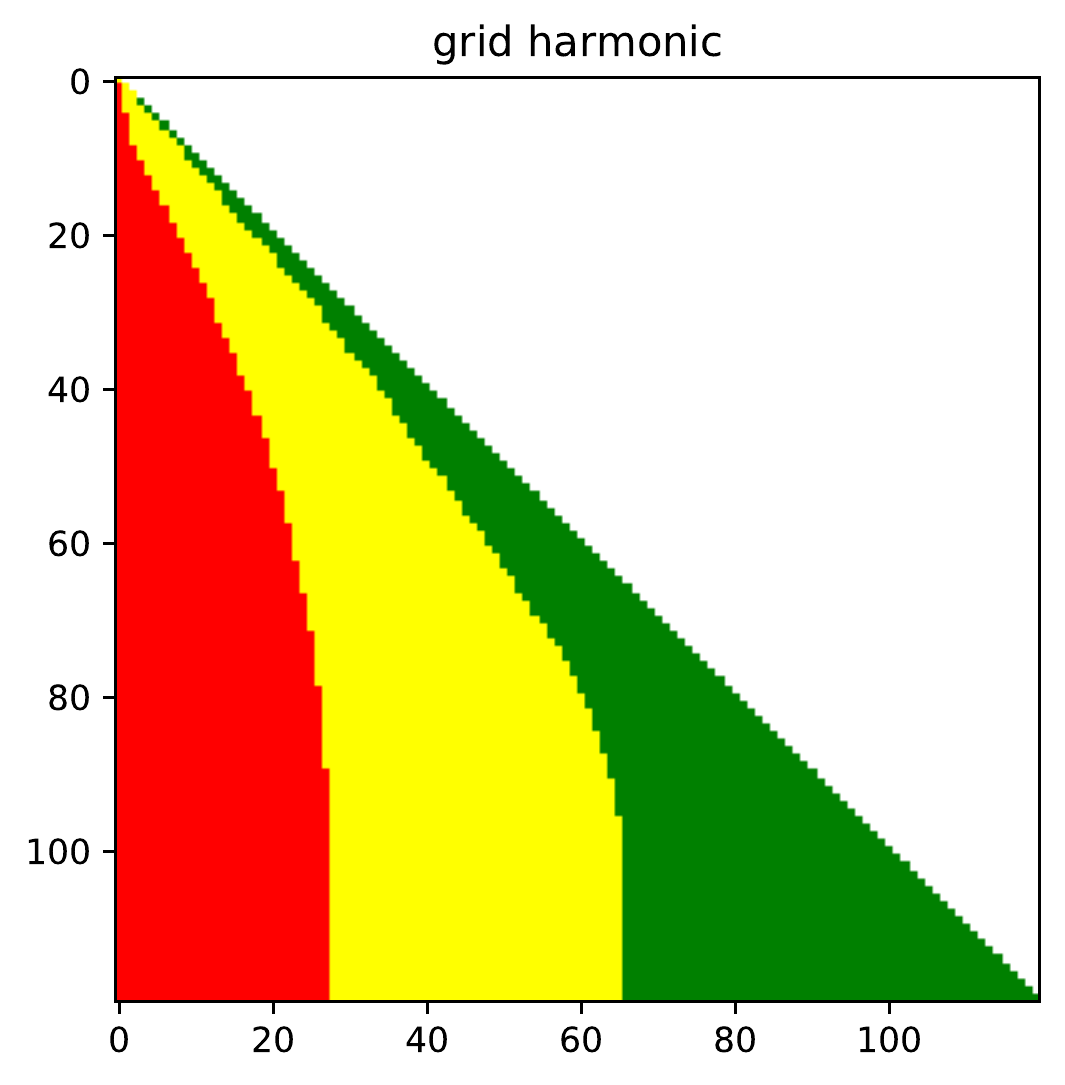}
  \caption{The GWGS discovery matrices for the simulation data using significance levels $1\%$ and $5\%$.
    We give results for the p-merging functions
    $F_{-\infty,K}$ (``Bonferroni''), $H_K$ (``Hommel''), $S_K$ (``Simes''),
    $F_{-1,K}$ (``harmonic''), $F^*_{-1,K}$ (``harmonic$^*$''), and $H^*_K$ (``grid harmonic'').}
  \label{fig:DM}
\end{figure}

Figure~\ref{fig:DM} shows the upper left corners of size $120\times120$ of the discovery matrices
produced by six of the p-merging functions
considered in this paper for the p-variables $P_1,\dots,P_{1000}$ defined as before
with the first $100$ observations coming from the alternative distribution $N(-5,1)$
and the remaining $900$ from the null distribution $N(0,1)$.
It uses the standard significance levels $1\%$ and $5\%$ as thresholds;
the values in the discovery matrices below $1\%$ are shown in red,
between $1\%$ and $5\%$ in yellow, and above $5\%$ in green.
As explained above, the number of red entries in the $l$th row of the discovery matrix
is a lower bound on the number of true discoveries among $l$ smallest p-values at the confidence level $99\%$,
and the total number of red and yellow entries in the $l$th row
is the analogous lower bound at the confidence level $95\%$.

The upper row of plots in Figure~\ref{fig:DM} shows the results for three standard methods,
and the lower row for three new methods.
The two of the standard methods that are universally valid, Bonferroni and Hommel, perform worst.
Harmonic averaging leads to better results.
The results for $F^*_{-1,K}$ are better, but the difference is not substantial.
The best results for a universally valid method are achieved by the grid harmonic merging function $H^*_K$.
The results for the Simes merging function $S_K$ are, of course, even better
(in view of Theorem \ref{thm:Simes}),
but $S_K$ is not valid in our setting.

Discovery matrices depend very much on the seed used for the pseudo-random number generator,
especially for high correlations (such as $0.9$ used in Figure~\ref{fig:DM}).
To make our results more reproducible, the discovery matrices in Figure~\ref{fig:DM}
are in fact element-wise medians over 10 simulations.
For other correlation coefficients, we obtain qualitatively similar results;
see Section \ref{app:simulation} in Supplemental Article.

\section{Concluding remarks}
\label{sec:10}

In this paper, we establish a representation and some conditions for admissible p-merging functions via calibrators.
Several new p-merging functions, most notably $H_K^*$ and $F_{-1,K}^*$, are proposed and shown to be admissible.
As  seen from our main results and their proofs, admissibility of p-merging functions is a sophisticated object.

We mention a few open questions.
First, our study is mainly confined to homogeneous p-merging functions.
The homogeneity requirement in Theorem \ref{th:e} is essential to our proof, and it is unclear whether or how one could relax it.
On the other hand,  most p-merging functions used in practice are homogeneous
(an exception is the Cauchy combination test of \cite{Liu/Xie:2020},
which is not valid for arbitrary dependence and hence does not fit into our setting).
Second, it is unclear how the strict convexity in  Theorem \ref{th:admissible} can be relaxed;
see discussions in Remark \ref{rem:strict-con}.
As a consequence, we suspect, but could not prove, the admissibility of $F_{1,K}^*$ for $K\ge 3$.
This function is not admissible for $K=2$; see Example \ref{ex:ex-th:e3}.
Third, we do not know whether $H_K^*$ is always inadmissible for all prime numbers $K$ (see Example \ref{ex:prime} for the cases of $K=2$ and $K=3$).
Fourth, an admissible p-merging function dominating a given p-merging function is typically not unique.
We wonder whether there are other admissible p-merging functions which dominate $H_K$ and $F_{-1,K}$,
the two most important inadmissible p-merging functions, that have analytical formulas as well as superior statistical performance.
Finally, it is important to develop more efficient ways of computing $H^*_K$;
in our simulation studies we used a brute-force method based on Algorithm~\ref{alg:generic}.

  \subsection*{Author contributions}

  The author names are listed in the alphabetical order.
  The main mathematical results are due to Bin Wang and Ruodu Wang.
  Vladimir Vovk has contributed to the presentation and computational experiments.

\subsection*{Acknowledgments}

  The authors thank the Editor, an Associate Editor, and three anonymous referees of the journal version of this paper for very helpful comments.
  V.~Vovk's research has been partially supported by Amazon, Astra Zeneca, and Stena Line.
  R.~Wang is supported by the Natural Sciences and Engineering Research Council of Canada (RGPIN-2018-03823, RGPAS-2018-522590).

\appendix
  \begin{center}
   {\huge Supplemental Article}
  \end{center}
  \addcontentsline{toc}{section}{Supplemental Article}

\section{Technical details}

\subsection{Proofs of Propositions \ref{prop:precise-p}, \ref{prop:lsc}, \ref{prop:limit} and \ref{prop:dominated}}
\label{app:a2}
 
\begin{proof}[Proof of Proposition \ref{prop:precise-p}]
Suppose that  $F$ is an admissible p-merging function 
and there exists $b\in (0,1)$ such that 
\[
  a:=\sup_{\mathbf P\in \mathcal P_Q^K} Q(F(\mathbf P) \le b) < b.
\]
Define the increasing function $h: [0,\infty) \to [0,\infty)$ by $h(x) := a \id_{\{x\in[a,b]\}}+x\id_{\{x\notin[a,b]\}}$.
We can check, for $t\in [a,b]$,
\begin{align*}
  \sup_{\mathbf P\in \mathcal P_Q^K} Q(h \circ F(\mathbf P) \le t)
  =
  \sup_{\mathbf P\in \mathcal P_Q^K} Q(F(\mathbf P)\le b) = a\le t,
\end{align*}
and for $t\notin[a,b]$, 
\begin{align*}
  \sup_{\mathbf P\in \mathcal P_Q^K}
  Q(h \circ F(\mathbf P) \le t ) 
  =
  \sup_{\mathbf P\in \mathcal P_Q^K}
  Q(F(\mathbf P) \le t) \le t.
\end{align*} 
Hence, $h\circ F$ is a p-merging function.
The fact that $(h\circ F)\wedge 1$ strictly dominates $F\wedge 1$ contradicts the admissibility of $F$.
Therefore, we obtain
$\sup_{\mathbf P\in \mathcal P_Q^K} Q(F(\mathbf P) \le t) \ge t$, $t\in (0,1)$.
Together with the fact that $F$ is a p-merging function,
we have
\[
  \sup_{\mathbf P\in \mathcal P_Q^K} Q(F(\mathbf P) \le t) = t,
  \quad
  t\in (0,1),
\]
and thus $F$ is precise.  
\end{proof}

\begin{proof}[Proof of Proposition \ref{prop:lsc}]
  Fix $\mathbf{P}=(P_1,\dots,P_K)\in\mathcal P^K_Q$ and $\alpha \in(0,1)$,
  and we will first show $Q(F'(\mathbf P) \le \alpha) \le \alpha$.
  For every $\lambda \in (0,1)$, let $A_\lambda$ be an event independent of $\mathbf{P}$ with $Q(A_\lambda)=\lambda$
  and define the random vector $\mathbf{P}^{\lambda} =(P^\lambda _1,\dots,P_K^\lambda)$
  via $\mathbf{P}^{\lambda} = \lambda \mathbf P $ if $A_\lambda$ occurs,
  and $ \mathbf{P}^{\lambda} = (1,\dots,1)$ if  $A_\lambda$ does not occur.
  For all $\lambda\in (0,1)$ and $k=1,\dots,K$, noting that 
  $Q(P_k\le \alpha/\lambda) \le \alpha/\lambda$, 
   we have
   \[Q(P^{\lambda}_k \le \alpha) = \lambda Q(\lambda P_k\le \alpha) = \lambda Q(P_k\le\alpha/\lambda) \le \alpha.\]
  Thus, $\mathbf{P}^\lambda \in \PPP^K_Q$  and by the fact that $F$ is a p-merging function,
  we have $Q(F(\mathbf P^\lambda) \le \alpha) \le \alpha$.
  Note that
  \[
    Q(F(\mathbf P^\lambda) \le \alpha)
    \ge
    Q(A_\lambda) Q(F(\mathbf P^\lambda) \le \alpha |A_\lambda) = \lambda Q(F(\lambda \mathbf P ) \le \alpha),
  \]
  from which we obtain
  \[Q(F(\lambda \mathbf P ) \le \alpha) \le \frac{\alpha}\lambda.\]
Since $F$ is increasing, by \eqref{eq:lsc}, we have
$F'(\mathbf P) \ge F(\lambda \mathbf P) $ for all $\lambda \in (0,1)$.
Therefore,
\[
Q(F'(\mathbf P) \le \alpha) \le Q(F(\lambda \mathbf P ) \le \alpha)\le \frac{\alpha}\lambda.
\]
Since $\lambda \in (0,1) $ is arbitrary, we have
$Q(F'(\mathbf P) \le \alpha) \le \alpha$,
thus showing that $F'$ is a p-merging function.  

For the statement on $\tilde F$, it is clear that
\[Q\left(\mathbf P \in [0,1]^K\setminus (0,1]^K \right) = Q\left(\bigcup_{k=1}^K \{P_k = 0\}\right) \le \sum_{k=1}^K Q(P_k=0)= 0.\]
Therefore, the values of $F$ on $[0,1]^K\setminus (0,1]^K$ do not affect its validity as a p-merging function.

  To show the last statement, let $F$ be an admissible p-merging function.
  Using the above results, we obtain that $F'\le F$ is a p-merging function.
  Admissibility of $F$ forces $F=F'$,
  implying that $F$ is lower semicontinuous.
  Similarly, $F=\widetilde F$, implying that $F$ takes value $0$ on $[0,1]^K\setminus (0,1]^K$. 
\end{proof}

\begin{proof}[Proof of Proposition \ref{prop:limit}]
Let $(F_n)_{n\in \N}$ be a sequence of  p-merging functions 
which converges to its point-wise limit $F$. 
For any $\mathbf P =(P_1,\dots,P_K)\in \PPP_Q^K$,
we know that $F_n(\mathbf P) \to F (\mathbf P)$ in distribution. 
Using the Portmanteau theorem,
we have  for all $\alpha \in (0,1)$, 
\[Q (F (\mathbf P) <\alpha) \le \liminf_{n\to \infty} Q(F_n(\mathbf P) <\alpha) \le \alpha.\]
It follows that for any $\epsilon>0$ and $\alpha\in (0,1)$, 
\[Q (F (\mathbf P) \le \alpha) \le \alpha+\epsilon.\]
Since $\alpha$ and $\epsilon$ are arbitrary, we know that $F (\mathbf P)$ is a p-variable, and 
 $F$ is a p-merging function.
\end{proof}

\begin{proof}[Proof of Proposition \ref{prop:dominated}]
  Let $R$ be the uniform probability measure   on $[0,1]^K$. 
  Fix a p-merging function $F$. 
  Set $F_0:=F$ and let
  \begin{equation}\label{eq:c}
    c_i
    :=
    \sup_{G:G\le F_{i-1}}
    \int_0^1   R (G\le \epsilon) \d \epsilon,
  \end{equation}
  where $i:=1$ and $G$ ranges over all p-merging functions dominating $F_{i-1}$.
  Let $F_i$ be a p-merging function satisfying
  \begin{equation}\label{eq:f}
    F_i\le F_{i-1}
    \quad\text{ and }\quad
    \int_0^1   R  (F_i\le \epsilon) \d \epsilon
    \ge c_{i}-2^{-i},
  \end{equation}
  where $i:=1$.
  Continue setting \eqref{eq:c} and choosing $F_i$ to satisfy \eqref{eq:f}
  for $i=2,3,\dots$.
  Set $G:=\lim_{i\to\infty}F_i$.
  By Proposition \ref{prop:limit}, $G$ is a p-merging   function.
  Clearly, $G$ dominates $F$ and
  \[
    \int_0^1 R (G\le \epsilon) \d \epsilon =  \int_0^1 R(H\le\epsilon) \d\epsilon
  \]
  for any p-merging function $H$ dominating $G$.

  By Proposition~\ref{prop:lsc},
  the zero-one adjusted version $\widetilde G$ of $G$ is a p-merging function,
  and so is the lower semicontinuous version $\widetilde G'$ of $\widetilde G$.
  Clearly $\widetilde G'=0$ on $[0,1]^K\setminus(0,1]^K$. 
  Let us check that $\widetilde G'$ is admissible.
  Suppose that there exists a p-merging function $H$ such that $H\le \widetilde G'$ and $H\ne \widetilde G'$ on $(0,1]^K$.
  Fix such an $H$ and a $\mathbf{p}\in(0,1]^K$ satisfying $H(\mathbf{p})<\widetilde G'(\mathbf{p})$. 
  Since $\widetilde G'$ is lower semicontinuous and $H$ is increasing,
  there exists $\lambda \in (0,1)$ such that $H<\widetilde G'$ on the hypercube
  $[\lambda \mathbf{p},\mathbf{p} ]\subseteq[0,1]^K$,
  which has a positive $R$-measure.
  This gives
  \[
    \int_0^1
    R  (G\le \epsilon) \d \epsilon
    \le
    \int_0^1 R(\widetilde G' \le \epsilon) \d\epsilon
    <
    \int_0^1 R(H\le\epsilon) \d\epsilon,
  \]
  a contradiction.
\end{proof}

\subsection{Proof of Proposition~\ref{pr:g} and a lemma used in the proof of Theorem \ref{th:admissible}}
\label{app:a5}

\begin{proof}[Proof of Proposition~\ref{pr:g}]
  We will only show the first statement, as the second one follows from essentially the same proof.
  It suffices to show that $F$ is not admissible among symmetric p-merging functions
  if and only if \eqref{eq:rejrelation} holds for some calibrator $g$. 
  First, if there exists such $g$, then the p-merging function based on the calibrator $g$ strictly dominates $F$.
  Second, if $F$ is not admissible,
  using Proposition \ref{prop:dominated} and Remark \ref{rem:symmetry},
  we know that there exists $G\le F$ that is admissible among symmetric p-merging functions. 
  Note that $G$ can be safely chosen as homogeneous.
  Using Theorem \ref{pr:e}, $G$ is induced by a calibrator $g$.
  Since $G$ strictly dominates $F$, we know that \eqref{eq:rejrelation} holds.
\end{proof}

\begin{lemma}\label{lem:f-trans}
  If the p-merging function induced by a calibrator $f$ is admissible,
  then so is the p-merging function induced by $g$ in \eqref{eq:def-g} for any $\eta\in [0,1/K]$.
\end{lemma}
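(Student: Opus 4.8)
The plan is to prove the contrapositive via the combinatorial admissibility criterion of Proposition~\ref{pr:g}(ii). As in the discussion following that proposition we may assume the given calibrator $f$ is admissible and satisfies $f(1)=0$; write $F$ for the p-merging function it induces, fix $\eta\in(0,1/K)$ (the cases $\eta=0$ and $\eta=1/K$ are trivial — the latter makes $g$ the Bonferroni calibrator $K\id_{[0,1/K]}$, already known admissible), let $g$ be the calibrator~\eqref{eq:def-g} and $G$ the p-merging function it induces. Recall $g\equiv K$ on $[0,\eta]$, $g\equiv0$ on $[1-(K-1)\eta,\infty)$, and $g(\eta+(1-K\eta)x)=f(x)$ for $x\in(0,1]$. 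Suppose for contradiction that $G$ is inadmissible: by Proposition~\ref{pr:g}(ii) there are $(\lambda_1,\dots,\lambda_K)\in\Delta_K$ and calibrators $g_1,\dots,g_K$ with $B\subsetneq B'$, where $B:=\{\mathbf q:\tfrac1K\sum_kg(q_k)\ge1\}$ and $B':=\{\mathbf q:\sum_k\lambda_kg_k(q_k)\ge1\}$. The goal is to manufacture from $(\lambda_k,g_k)$ a competitor for $F$ of the form demanded by Proposition~\ref{pr:g}(ii), contradicting the hypothesis. The first, structural observation: since $g\equiv K$ on $[0,\eta]$, every $\mathbf q$ with $\min_kq_k\le\eta$ lies in $B$, hence in $B'$; feeding $B'$ the point with $k$-th coordinate $\eta$ and all other coordinates exceeding $1$ (where the $g_j$ vanish) gives $\lambda_kg_k(\eta)\ge1$, so $g_k\ge1/\lambda_k$ on $[0,\eta]$ by monotonicity, whence $\lambda_k\int_0^\eta g_k\ge\lambda_k\eta g_k(\eta)\ge\eta$. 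Summing over $k$ and using $\int_0^1 g_k\le1$ and $\sum_k\lambda_k=1$ yields the key mass bound
\[
  \sum_k\lambda_k\int_\eta^1 g_k\ \le\ 1-K\eta .
\]

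Now I would build the competitor. Put $\mu_k^{\mathrm{min}}:=\frac{\lambda_k}{1-K\eta}\int_\eta^{1-(K-1)\eta}g_k\le\frac{\lambda_k}{1-K\eta}\int_\eta^1 g_k$; the mass bound gives $\sum_k\mu_k^{\mathrm{min}}\le1$, so one may pick $(\mu_1,\dots,\mu_K)\in\Delta_K$ with $\mu_k\ge\mu_k^{\mathrm{min}}$ and $\mu_k>0$ for every $k$. Define, for $x\in(0,1]$, the reparametrised functions $f_k(x):=(\lambda_k/\mu_k)\,g_k(\eta+(1-K\eta)x)$, with $f_k(0):=\infty$ and $f_k\equiv0$ on $(1,\infty)$. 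Each $f_k$ is decreasing, and the change of variables $u=\eta+(1-K\eta)x$ gives $\int_0^1 f_k=\mu_k^{\mathrm{min}}/\mu_k\le1$, so each $f_k$ is a genuine calibrator.

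It then remains to check the strict inclusion required by Proposition~\ref{pr:g}(ii). For $\mathbf p\in(0,1]^K$ the affine map $x\mapsto\eta+(1-K\eta)x$ sends $\mathbf p$ to $\mathbf q\in(\eta,1-(K-1)\eta]^K$ with $g(q_k)=f(p_k)$; hence $\tfrac1K\sum_kf(p_k)\ge1$ forces $\mathbf q\in B\subseteq B'$, i.e.\ $\sum_k\mu_kf_k(p_k)=\sum_k\lambda_kg_k(q_k)\ge1$. With the trivial case of a vanishing coordinate this gives $\{\tfrac1K\sum_kf(p_k)\ge1\}\subseteq\{\sum_k\mu_kf_k(p_k)\ge1\}$ on $(0,\infty)^K$; the extension to coordinates exceeding $1$ is the routine reduction that these sets, like admissible p-merging functions (Proposition~\ref{prop:lsc}), depend on each coordinate only through its truncation at $1$ — using $f(1)=0$, and, if necessary, first trimming the dominating structure so that the $g_k$ vanish on $[1-(K-1)\eta,\infty)$. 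For strictness, take $\mathbf q^*\in B'\setminus B$; since $\{\min_kq_k\le\eta\}\subseteq B$ we have $\min_kq^*_k>\eta$, and clamping each coordinate at $1-(K-1)\eta$ changes neither membership in $B$ nor (monotonicity of the $g_k$) membership in $B'$, so we may assume $\mathbf q^*\in(\eta,1-(K-1)\eta]^K$. Its preimage $\mathbf p^*\in(0,1]^K$ then satisfies $\tfrac1K\sum_kf(p^*_k)=\tfrac1K\sum_kg(q^*_k)<1$ but $\sum_k\mu_kf_k(p^*_k)=\sum_k\lambda_kg_k(q^*_k)\ge1$, so the inclusion is strict. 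This contradicts the admissibility of $F$ by Proposition~\ref{pr:g}(ii); hence $G$ is admissible.

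The hard part — and the only genuinely substantive step — is the mass bound $\sum_k\lambda_k\int_\eta^1 g_k\le1-K\eta$: without it the natural reparametrised $f_k$ would carry too much mass to be calibrators. It is exactly here that one must exploit that $g$ attains the maximal calibrator value $K$ on the whole interval $[0,\eta]$, which pins the axis-neighbourhood $\{\min_kq_k\le\eta\}$ inside $B$ and therefore inside $B'$. The remaining items (that the $f_k$ are calibrators, the inclusion, and its strictness) are bookkeeping, the only mild nuisance being the behaviour at coordinates $\ge1$, which is disposed of by the standard truncation reduction.
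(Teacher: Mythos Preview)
Your proof is correct and takes a genuinely different route from the paper's. The paper argues directly at the level of rejection regions: given a p-merging function $G'$ strictly dominating $G$, it picks a witness point $\alpha\mathbf p\in R_\alpha(G')\setminus R_\alpha(G)$, pulls it back through the affine map $q\mapsto(q-\eta)/(1-K\eta)$ to a point $\alpha\mathbf p'$, and then builds an explicit mixture distribution (a discrete uniform over permutations of $(\alpha\eta,1,\dots,1)$ combined with a scaled copy of an arbitrary p-variable vector) whose marginals are p-variables and which shows $R_\alpha(F)$ can be enlarged to include $\alpha\mathbf p'$ --- contradicting admissibility of $F$. Your argument instead works entirely through the calibrator criterion of Proposition~\ref{pr:g}(ii): from a dominating structure $(\lambda_k,g_k)$ for $G$ you extract the mass bound $\sum_k\lambda_k\int_\eta^1 g_k\le1-K\eta$ and reparametrize to produce a dominating structure $(\mu_k,f_k)$ for $F$. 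Both approaches hinge on $g\equiv K$ on $[0,\eta]$, but yours makes the mass accounting explicit and stays on the dual side, while the paper's is self-contained and never invokes Proposition~\ref{pr:g}.

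One minor point: your handling of coordinates exceeding $1$ is a bit loose. The claim that both sets ``depend on each coordinate only through its truncation at $1$'' is not quite right for the $f_k$-side, since $f_k(1)=(\lambda_k/\mu_k)g_k(\tau)$ need not vanish; and the suggested trimming of $g_k$ to vanish on $[\tau,\infty)$, while it does preserve the inclusion $B\subseteq\tilde B'$, can destroy the \emph{strictness} (a witness $\mathbf q^*\in B'\setminus B$ may rely on positive contributions $\lambda_kg_k(q^*_k)$ from coordinates $q^*_k\in[\tau,1]$). The clean fix avoids trimming altogether: for $\mathbf p$ with $\frac1K\sum_kf(p_k)\ge1$ and all $p_k>0$, set $q_k=\eta+(1-K\eta)p_k$ when $p_k\le1$ and $q_k=2$ (say) when $p_k>1$; then $\mathbf q\in B\subseteq B'$, and since $g_k(2)=0=f_k(p_k)$ on the $J$-coordinates, $\sum_k\mu_kf_k(p_k)=\sum_k\lambda_kg_k(q_k)\ge1$. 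With this adjustment the inclusion holds on all of $[0,\infty)^K$, and your strictness argument (which already lives in $(\eta,\tau]^K$) goes through unchanged.
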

\begin{proof}[Proof of the lemma]
  The case $\eta=0$ is trivial since $g=f$.
  If $\eta=1/K$, then $g$ induces the Bonferroni p-merging function, which is admissible as shown in Proposition 6.1 of \citet{Vovk/Wang:E}.
  Below we assume $\eta\in (0,1/K)$.
  Let $F$ and $G$ be the p-merging functions  induced by $f$ and $g$, respectively,
  and let $G'$ be a p-merging function dominating $G$.
  Suppose for the purpose of contradiction that there exists $\mathbf p\in [0,\infty)^K$ and  $\alpha \in(0,1)$
  such that $ \alpha  \mathbf p\in R_\alpha(G')$ and $  \alpha \mathbf p\notin R_\alpha(G)$.
  Clearly, no component of $\mathbf p $ can be in $[0,\eta]$,
  and hence   $\mathbf p \in  (\eta,\infty)^K$.
  Let $\mathbf p'= (\mathbf p -\eta \mathbf 1)/(1-K\eta)$.
  By the relationship between $f$ and $g$, we know $\alpha \mathbf p' \notin R_\alpha(F)$.
  Let $A = R_\alpha(F)\cup\{\alpha \mathbf p'\}$.
  Take any vector $\mathbf P$ of p-variables, and let $\nu$ be the distribution of $\alpha ((1-K\eta) \mathbf P + \eta \mathbf 1)$. 
  Further, let $\Pi$ be the set of all permutations of the vector $(\alpha \eta,1,\dots,1)$ and $\mu$ be the discrete uniform distribution over $\Pi$. 
  Clearly, $\Pi \subseteq R_\alpha(G)\subseteq R_\alpha(G')$. 
  Let $\mathbf P'$ follow the distribution
  $( K \eta \alpha )\mu + K\eta(1-\alpha) \delta_{\mathbf 1} + (1-K\eta)\nu$.
  It is easy to verify that the components of $\mathbf P'$ are p-variables. 
  Note that if $\alpha \mathbf P\in  A$, then
  $\alpha((1-K\eta) \mathbf P + \eta \mathbf 1)\in (R_\alpha (G)\cup\{\alpha \mathbf p\})\subseteq R_\alpha (G')$.
  We have
  \begin{align*}
    \alpha \ge \p(\mathbf P'\in R_\alpha(G'))
    &= K\eta \alpha + (1-K\eta)\p(\alpha((1-K\eta) \mathbf P + \eta \mathbf 1 )\in R_\alpha(G))\\
    &\ge
    K \eta \alpha + (1-K\eta)\p(\mathbf P \in A).
  \end{align*} 
  Hence, $\p(\mathbf P \in A) \le \alpha$.
  Since $\mathbf P$ is arbitrary,
  this implies that the rejection region of $F$ at level $\alpha$ can be enlarged to $A$, a contradiction of the admissibility of $F$.
  Therefore, the above $\mathbf p$ does not exist, and $G$ is admissible. 
\end{proof}

\subsection{Proofs of Propositions \ref{prop:grand}, \ref{th:m2}, \ref{prop:grand-2} and \ref{prop:grand-3}}
\label{app:a7}

\begin{proof}[Proof of Proposition \ref{prop:grand}]
The simple case $K=2$ is discussed in Section~\ref{app:K2}, and we assume $K>2$.
The cases $r\ge 1/(K-1)$, $r=-1$ and $r=0$ are obtained in Propositions 3, 4, and 6 of \citet{Vovk/Wang:2020Biometrika},
and are easily obtained from the case $r\notin\{-1,0\}$ by letting $r\to-1$ or $r\to0$, respectively.
It remains to show the remaining cases.
Let $q_0$ and $q_1$ be the essential infimum and the essential supremum of a random variable, respectively,
and $\mathcal U\subset \mathcal P_Q$ be the set of $\mathrm {U}[0,1]$ random variables. 
Note that
\[
  R_\epsilon (F_{r,K}) =\left \{\mathbf p\in [0,\infty)^K:
  M_{r,K}(\mathbf p) \le \frac{\epsilon }{b_{r,K}}\right\} = \epsilon\left \{\mathbf p\in [0,\infty)^K:
  M_{r,K}(\mathbf p) \le \frac{1}{b_{r,K}} \right\}.
\]
From $R_{\epsilon}(F_{r,K})$,
in order for $\sup_{\mathbf P\in \mathcal P_Q^K } \p(\mathbf P \in R_\epsilon (F_{r,K})) = \epsilon$, 
it is necessary and sufficient to choose
\[
  \frac{1 }{b_{r,K}} =\inf_{\mathbf P\in \mathcal P_Q^K }  {q_1( M_{r,K}(\mathbf P))},
\] 
Simple algebra gives, for $r<0$,   
\[
  b_{r,K}^{-1} 
  =  
  \left(
    \frac 1K \sup\{\q_0 (U_1^r+\dots+U_K^r): U_1,\dots,U_K\in\mathcal{U}\}
  \right)^{1/r},
\] 
and for $r>0$,
\[
  b_{r,K}^{-1} 
  =   
  \left(
  \frac 1K
  \inf
  \{
    \q_1(U^r_1+\dots+U_K^r)
    :
    U_1,\dots,U_K\in\mathcal{U}
  \}
  \right)^{1/r}.
\] 
The rest of the proof is a direct consequence of Lemma \ref{lem:newlem} below, which gives, for $r<0$,
\[
  \sup\{\q_0 (U_1^r+\dots+U_K^r): U_1,\dots,U_K\in\mathcal{U}\}   =  (K-1) (1-(K-1)c)^r +c^{r}, 
\]
and for $r\in (0,1/(K-1))$, 
\[ 
  \inf
  \{
    \q_1(U^r_1+\dots+U_K^r)
    :
    U_1,\dots,U_K\in\mathcal{U} 
  \}
  =
  (K-1) (1-(K-1)c)^r +c^{r},
\]
where $c=c_{r}$.
Therefore, $b_{r,K}^{-1} = M_{r,K}(c_r,d_r,\dots,d_r)$.
\end{proof}
 
\begin{lemma}\label{lem:newlem}
  For any increasing convex function $f: [0,1)\to\R$ satisfying
  either $f(1)=\infty $ or $f(1)-f(0) > K \int_0^1 (f(u)-f(0)) \d u$ where $f(1)$ is the limit of $f(x)$ as $x\uparrow 1$,
  we have
  \[
    \sup\{\q_0 (f(U_1) +\dots+f(U_K)): U_1,\dots,U_K\in\mathcal{U}\}
    =
    (K-1) f((K-1)c_F)+f(1-c_F),
  \]
  where $c_F$ is the unique solution $c\in(0,1/K)$ to the following equation
  \begin{equation}\label{eq:general-formula2}
    (K-1) F^{-1}((K-1)c) + F^{-1}(1-c)
    =
    K \frac{\int_{(K-1)c} ^{1-c} F^{-1}(y) \d y}{1-Kc}.
  \end{equation}  
\end{lemma}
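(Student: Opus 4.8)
The plan is to read this as a robust risk-aggregation problem and to import the theory of convex-order-minimal aggregates of risks with monotone marginal densities. Since $f$ is increasing and convex on $[0,1)$, the random variable $X:=f(U)$ with $U\in\mathcal U$ has a nonincreasing density on its support $[f(0),f(1))$ --- unbounded exactly when $f(1)=\infty$ --- and $f$ is the quantile function of $X$. Hence the quantity to be computed is the best-case lower bound of the aggregate of $K$ copies of $X$ under arbitrary dependence:
\begin{multline*}
  \sup\left\{\q_0\left(\textstyle\sum_{k=1}^K f(U_k)\right): U_1,\dots,U_K\in\mathcal U\right\}\\
  = \sup\left\{t:\ \textstyle\sum_{k=1}^K f(U_k)\ge t \text{ a.s.\ for some }U_1,\dots,U_K\in\mathcal U\right\}.
\end{multline*}
The first step is to invoke, from \citet{Jakobsons/etal:2016} (and the joint/complete mixability results \citet{WW11,WW16,Wang/etal:2013} on which it rests), the existence and explicit form of a coupling $S^*=\sum_k f(U^*_k)$ that is smallest in the convex order among all couplings of $K$ copies of $X$: $S^*$ arises from a mixing construction in which the bulk of the mass is ``mixed'' so that $\sum_k f(U^*_k)$ concentrates at its minimal attainable value while the remaining mass only makes it larger. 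The standing hypothesis ``$f(1)=\infty$, or $f(1)-f(0)>K\int_0^1(f(u)-f(0))\d u$'' is precisely the non-degeneracy condition selecting the case of this construction in which the mixed region is the interval $[(K-1)c,1-c]$ in the $u$-coordinate; the requirement that this be the largest region over which the $K$ copies of the rescaled law of $f(U)$ remain jointly mixable is exactly equation \eqref{eq:general-formula2}, whose unique root $c_F\in(0,1/K)$ is furnished by \citet[Lemma~3.1]{Jakobsons/etal:2016} --- the same lemma the paper already uses for the constants $c_r$.

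The second step reads off the value. By \eqref{eq:general-formula2} the constant value of $\sum_k f(U^*_k)$ on the mixed region, namely $K$ times the average of $f$ over $[(K-1)c_F,1-c_F]$, equals
\[
  s^* := (K-1)f\big((K-1)c_F\big)+f(1-c_F),
\]
the $f$-sum of the extreme configuration with $K-1$ coordinates at $(K-1)c_F$ and one at $1-c_F$; and off the mixed region the construction keeps $\sum_k f(U^*_k)\ge s^*$. Hence $\q_0(S^*)=s^*$. Checking the marginal constraints ($U^*_k\sim\U[0,1]$ for each $k$) together with this last inequality is where convexity of $f$ is used, and is the bulk of the bookkeeping.

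The third step, optimality, is short once $S^*$ is in hand. Let $S=\sum_k f(U_k)$ be the aggregate of any coupling of $K$ copies of $X$. Since $S^*$ is smallest in the convex order, $\E[(t-S^*)_+]\le\E[(t-S)_+]$ for every $t$. If $\q_0(S)>s^*$, pick $t\in(s^*,\q_0(S))$; then $S>t$ a.s., so $\E[(t-S)_+]=0$, hence $\E[(t-S^*)_+]=0$, hence $S^*\ge t$ a.s.\ and $\q_0(S^*)\ge t>s^*$, contradicting $\q_0(S^*)=s^*$. So $\q_0(S)\le s^*$ for every coupling, and with the second step this gives the claimed equality. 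The step I expect to be the main obstacle is the first: transcribing the convex-order-minimal construction of \citet{Jakobsons/etal:2016} into the present parametrization, checking its hypotheses under our condition on $f$ --- especially the boundary case $f(1)=\infty$ of unbounded support, where the marginal is not itself completely mixable and the construction must restrict to $[(K-1)c_F,1-c_F]$ --- and confirming that the minimal value of the aggregate is exactly $(K-1)f((K-1)c_F)+f(1-c_F)$ rather than some other expression in $c_F$. Should that reference not apply verbatim, the fallback is to run the mixing construction by hand and replace the convex-order step by a linear dual certificate for the transportation problem: functions $\phi_1,\dots,\phi_K:[0,1]\to\R$ with $\sum_k\phi_k(u_k)\le\id_{\{\sum_k f(u_k)\le s^*+\delta\}}$ pointwise and $\sum_k\int_0^1\phi_k(u)\d u>0$, read off from the Lagrange multipliers of the extremal coupling, which force $Q(\sum_k f(U_k)\le s^*+\delta)>0$ for every coupling by Markov's inequality --- and here too verifying the pointwise inequality is exactly where convexity of $f$ enters.
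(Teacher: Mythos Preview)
Your proposal is correct and takes essentially the same approach as the paper: both recognize $f$ as the quantile function of a distribution with decreasing density and invoke the robust risk-aggregation results of \citet{Wang/etal:2013} (the paper cites their Theorem~3.4 at level $\alpha=0$ in a single sentence, with an approximation remark for general convex $f$). Your route through the convex-order-minimal coupling of \citet{Jakobsons/etal:2016} is a slightly more elaborate packaging of the same mixing construction, and your convex-order argument for optimality is fine; the only substantive difference is that you unpack what the paper leaves to the citation.
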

\begin{proof}[Proof of the lemma]
  The lemma is essentially Theorem 3.4 of \citet{Wang/etal:2013} applied to the probability level $\alpha=0$,
  noting that any convex quantile function $f$ can be approximated by distributions with a decreasing density.
\end{proof}

\begin{proof}[Proof of Proposition \ref{th:m2}]
We use the calibrators $f_r$ mentioned after Theorem \ref{th:m1}.
We first consider $r<0$.
For $m=1,\dots,K$ and $p_1,\dots,p_K>0$,
let $\mathbf{v}_m:=(c_r,d_r,\dots,d_r)\in\R^m$, and we have 
\[ 
  \sum_{k=1}^m \frac{p_{(k)} ^r - d_r^r}{c_r^r - d_r^r} \ge 1
  ~ \Longleftrightarrow ~
  M_{r,m} (\mathbf{p}_m) \le M_{r,m} (c_r,d_r,\dots,d_r)= M_{r,m} (\mathbf{v}_m  ).
\]
Hence,
\[
  \sum_{k=1}^K \left(\frac{ p_{(k)} ^r - d_r^r}{c_r^r - d_r^r}\right)_+ \ge 1
  ~\Longleftrightarrow~
  \bigvee_{m=1}^K\left(\sum_{k=1}^m \frac{p_{(k)} ^r - d_r^r}{c_r^r - d_r^r} \right)\ge 1 
  ~\Longleftrightarrow~
  \bigwedge_{m=1}^K \frac{M_{r,m}(\mathbf p_m)}{M_{r,m}(\mathbf v_m )}\le 1.
\] 
Using its calibrator $f_r$, for each $\epsilon \in (0,1)$, $F_{r}^* (\mathbf p)\le \epsilon$ 
if and only if $  \bigwedge_{m=1}^K \frac{M_{r,m}(\mathbf p_m)}{M_{r,m}(\mathbf v_m )}\le \epsilon $,
and hence \eqref{eq:F-rep} holds.
The case $r\in [0,1/(K-1))$ is similar. 
 
Next, consider the case $r\ge 1/(K-1)$.
For $m=1,\dots,K$ and $p_1,\dots,p_K>0$, we have 
\[
  \frac 1 K\sum_{k=1}^m \tau^{-1} ( 1-p_{(k)}^r ) \ge 1
  ~\Longleftrightarrow~
  M_{r,m} (\mathbf{p}_m) \le 1 -  \frac{\tau K}{m}.
\]
Hence,  
\[
  \sum_{k=1}^m  f_r(p_{(k)}^r ) \ge  K
  ~\Longleftrightarrow~
  \bigvee_{m=1}^K\left(\sum_{k=1}^m \tau^{-1} ( 1-p_{(k)}^r )_+\right)\ge K 
  ~\Longleftrightarrow~
  \bigwedge_{m=1}^K \frac{M_{r,m}(\mathbf p_m)}{(  1 -  \tau K /m )_+  }\le 1.
\] 
Since $F^*_r$ is induced by $f_r$, we have, for $\epsilon \in (0,1)$,
$F^*_r (\mathbf p)\le \epsilon$ if and only if either
$\bigwedge_{m=1}^K \frac{M_{r,m}(\mathbf p_m)}{(1 - \tau K /m )_+}\le \epsilon$ or $p_{(1)}=0$.
Hence, \eqref{eq:arith}  holds.
\end{proof}

\begin{proof}[Proof of Proposition \ref{prop:grand-2}] \quad
\begin{enumerate}[(i)]
\item 
  To show the ``if'' statement, it suffices to note again that
  $
  M_{r,K} (\mathbf u)\le  M_{s,K}(\mathbf u)
  $ for all $\mathbf u\in (0,\infty)^K$
  and the above inequality is strict unless $\mathbf u$ has only one positive component
  \citep[Theorem 16]{Hardy/etal:1952}.
  Therefore, $a M_{r,K}$ (strictly) dominates $b M_{s,K}$.
  To show the ``only if'' statement,
  we note that $aM_{r,K}$ cannot dominate $bM_{s,K}$ if $a>b$ since $M_{r,K}$ and $M_{s,K}$ agree on vectors with equal components.
\item
  We first assume $0<r<s$.
  To show the ``if'' statement, it suffices to note again that
  $
  K^{1/r} M_{r,K} (\mathbf u)\ge K^{1/s} M_{s,K}(\mathbf u)$
  for all   $\mathbf u\in [0,\infty)^K$
  and the above inequality is strict if $\mathbf u$ does not have equal components \citep[Theorem 19]{Hardy/etal:1952}.
  Therefore, $b M_{s,K}$ (strictly) dominates $a M_{r,K}$.
  To show the ``only if'' statement, we note that, if $ a K^{-1/r} < bK^{-1/s}$,
  \[F_{r,K}(1,0,\dots,0) = a  K^{-1/r} <  b K^{-1/s} = F_{s,K}(1,0,\dots,0),\]
  and thus  $bM_{s,K}$ cannot dominate  $aM_{r,K}$ if $a K^{-1/r} < b K^{-1/s}$. 

We next assume $ r<s<0$.
To show the ``if'' statement, 
we first note that, using \citet[Theorem 19]{Hardy/etal:1952}, for all $\mathbf u \in (0,\infty]^K$, 
\[
  K^{1/r} M_{r,K}(1/\mathbf u) = \frac{1}{K^{-1/r} M_{-r,K}(\mathbf u)}
 \ge 
 \frac{1}{K^{-1/s} M_{-s,K}(\mathbf u)}
 =  K^{1/s} M_{s,K}(1/\mathbf u),
\]
and the above inequality is strict if at least one of the components of $\mathbf u$ is $0$.
Therefore, $bM_{s,K}$ strictly dominates $a M_{r,K}$ if
$a K^{-1/r} \le b K^{-1/s}$.
To show the ``only if'' statement, we note that, if $ a K^{-1/r} < bK^{-1/s}$, we have
\begin{align*}
  \lim_{\epsilon\downarrow 0} a M_{r,K}(1,1/\epsilon,\dots,1/\epsilon)
  &=
  a K^{-1/r} 
  < b K^{-1/s} 
  =
  \lim_{\epsilon\downarrow 0}  b M_{s,K}(1,1/\epsilon,\dots,1/\epsilon) ,
\end{align*}
and thus $b M_{s,K}$ cannot dominate $aM_{r,K}$ if $a K^{-1/r} < b K^{-1/s}$.

Finally, we consider the case $rs\le 0$. If $r\le 0<s$,
then using simple properties of the averages, we have 
\[ M_{r,K} (0,1,\dots,1) = 0 <  \left(\frac{K-1}{K}\right)^{1/s} =  M_{s,K} (0,1,\dots,1). \] 
If $r<s=0$, we have 
\begin{equation*}
  \lim_{\epsilon \downarrow 0} \frac1 \epsilon M_{r,K} (\epsilon^K,1,\dots,1)
  =
  \lim_{\epsilon \downarrow 0} \left( \frac{  M_{r,K} (\epsilon^K,1,\dots,1)   }\epsilon\right) 
  =
  \lim_{\epsilon \downarrow 0} \left(  \frac{\epsilon^{Kr} + K-1}{K\epsilon^r }\right)^{1/r}
  =
  0,
\end{equation*} 
whereas
\begin{equation*}
  \lim_{\epsilon \downarrow 0} \frac1 \epsilon M_{0,K} (\epsilon^K,1,\dots,1)
  =
  \lim_{\epsilon \downarrow 0} \frac1 \epsilon M_{0,K} (\epsilon^K,1,\dots,1)
  =
  1 > 0.
\end{equation*}  
In either case, $b M_{0,K}$ cannot dominate  $a M_{r,K}$.
 
Summarizing the above cases, $bM_{0,K}$ dominates $a M_{r,K}$ if and only if $a K^{-1/r} \ge b K^{-1/s}$ and $rs>0$.
\qedhere
\end{enumerate}
\end{proof}

\begin{proof}[Proof of Proposition \ref{prop:grand-3}] 
  In this proof, we do not truncate our merging functions at $1$.
  That is, we directly treat $F_{r,K}=b_{r,K}M_{r,K}$ without loss of generality,
  since the functions in the M-family are homogeneous.
  We say that two p-merging functions are {not comparable} if neither of them dominates the other one. 
 
  Using Table~1 of \citet{Vovk/Wang:2020Biometrika} (or Section~\ref{app:K2}), 
  the case $K=2$ follows directly from Proposition \ref{prop:grand-2} since $b_{r,2}=2^{1/r}$ for all $r\in [-\infty,1]$ and $b_{r,2}=2$ for $r<1$.
  We next study the case $K\ge 3$. 
  Using Table 1 of \citet{Vovk/Wang:2020Biometrika}, $b_{r,K}= K^{1/r}$ for $r\ge K-1$.
  Hence, by Proposition \ref{prop:grand-2},
  $F_{r,K}$ is dominated by $F_{s,K}$
  if $K-1\le r<s$.
  We next show that this is the only possible domination between $F_{r,K}$ and $F_{s,K}$.
  
  First,  for $r,s\in [(K-1)^{-1}, K-1]$, we have $b_{r,K}= (1+r)^{1/r}$.
  Clearly, $b_{r,K}$ is strictly decreasing in $r$, and hence 
  Proposition \ref{prop:grand-2} (i) implies that $F_{r,K}$ does not dominate $F_{s,K}$ for $r<s$.
  Moreover, we can calculate
  \[
    \frac{b_{r,K} K^{-1/r}}{b_{s,K} K^{-1/s}}
    =
    \frac{\left(\frac{1+r} K \right)^{1/r}}{\left(\frac{1+s} K \right)^{1/s}}
    =
    \left(\frac{1+r}{1+s} \right)^{1/s} \left(\frac{1+r} K \right)^{1/r-1/s} < 1.
  \]
  Therefore, $F_{s,K}$ does not dominate $F_{r,K}$ either.
  We thus know that $F_{s,K}$ and $F_{r,K}$ are not comparable in this case.  

Next, we consider $s<r\le(K-1)^{-1}$.
To show that $F_{s,K}$ and $F_{r,K}$ are not comparable,
by \eqref{eq:compare-p1} and Proposition \ref{prop:grand-2},
it suffices to show $b_{r,K}\ne  b_{s,K}$ and $b_{r,K} K^{-1/r} \ne  b_{s,K}K^{-1/s}$.
These can be shown by straightforward (although cumbersome) calculation from the explicit formulas in Proposition \ref{prop:grand}.
An intuitive explanation is that the dependence structure of the vector $\mathbf P_r\in \mathcal P_Q^K$
which gives the precise probability $\p(F_{r,K}(\mathbf P_r)\le \epsilon)=\epsilon$ is different across $r\in (-\infty, K-1]$
(see, e.g., \citet{Wang/etal:2013}).
This leads to $\p(F_{s,K}(\mathbf P_r)\le \epsilon)<\epsilon$ and $\p(F_{r,K}(\mathbf P_s)\le \epsilon)<\epsilon$
for $s\ne r$, and hence the two p-merging functions cannot be compared. 

The above arguments show that each $F_{r,K}$, $r<K-1$, is not comparable with $F_{s,K}$ for $s$ in a neighbourhood of $r$.
Finally, using Lemma \ref{lem:compare} below, we obtain that  $F_{r,K}$ for $r \le K-1$ is admissible within the M-family  
\end{proof}
 
\begin{lemma}\label{lem:compare}
  If $F_{r,K}$ is not dominated by $F_{s,K}$ for any $s$ in a neighbourhood of $r$,
  then $F_{r,K}$ is admissible within the M-family.
\end{lemma}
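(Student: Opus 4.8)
The plan is to argue by contradiction, using Proposition~\ref{prop:grand-2} to translate ``domination within the M-family'' into scalar inequalities between the coefficients $b_{r,K}$ and the quantities $\phi(u):=b_{u,K}K^{-1/u}$, and then to observe that \eqref{eq:compare-p1} is precisely a monotonicity statement about $\phi$, which collapses the ``local $\Rightarrow$ global'' step into a short sandwich argument. Throughout, as in the proof of Proposition~\ref{prop:grand-3}, I would drop the truncation at $1$ and treat $F_{u,K}=b_{u,K}M_{u,K}$; by homogeneity this does not change which M-family member dominates which, and recall that domination between two distinct members is automatically strict (Section~\ref{sec:mfamily}), so ``admissible within the M-family'' is exactly ``not dominated by $F_{s,K}$ for any $s\ne r$''.

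So suppose, for contradiction, that $F_{r,K}$ is dominated by $F_{s,K}$ for some $s\ne r$. First I would rule out $s<r$: evaluating the domination inequality $F_{s,K}\le F_{r,K}$ at a constant vector $(\delta,\dots,\delta)$ with $\delta$ small (or, equivalently, invoking Proposition~\ref{prop:grand-2}(i)) gives $b_{s,K}\le b_{r,K}$, which contradicts the strict monotonicity of $u\mapsto b_{u,K}$. This strictness is the only external input: for $u\ge 1/(K-1)$ it is immediate from $b_{u,K}=((u+1)\wedge K)^{1/u}$, and for $u<1/(K-1)$ it follows from the explicit expression in Proposition~\ref{prop:grand} (as already used in the proof of Proposition~\ref{prop:grand-3}) together with the monotonicity implied by \eqref{eq:compare-p1}. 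Hence necessarily $s>r$, and then Proposition~\ref{prop:grand-2}(ii) yields $rs>0$ and $\phi(r)\ge\phi(s)$.

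The key observation is that dividing the left-hand inequality of \eqref{eq:compare-p1} by $K^{1/s}$ gives $\phi(r)\le\phi(s)$ whenever $r<s$ and $rs>0$; that is, $\phi$ is non-decreasing on each of the intervals $(-\infty,0)$ and $(0,\infty)$. Since $rs>0$, the whole segment $[r,s]$ lies in one of these two intervals, so for every $t\in[r,s]$ we have $\phi(r)\le\phi(t)\le\phi(s)$, and combining this with $\phi(r)\ge\phi(s)$ forces $\phi(t)=\phi(r)$ for all $t\in[r,s]$. Now pick any $t$ in the (nonempty) intersection of $(r,s)$ with the neighbourhood of $r$ provided by the hypothesis; then $rt>0$ and $b_{r,K}K^{-1/r}=\phi(r)=\phi(t)=b_{t,K}K^{-1/t}$, so Proposition~\ref{prop:grand-2}(ii) shows that $F_{t,K}$ dominates $F_{r,K}$, contradicting the assumption that $F_{r,K}$ is not dominated by $F_{s,K}$ for $s$ in a neighbourhood of $r$. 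Therefore no such $s$ exists, and $F_{r,K}$ is admissible within the M-family.

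I do not expect a genuine obstacle here; the only care needed is bookkeeping. One must keep straight the direction of ``dominates'' (the paper's convention $F\le G$) in each application of Proposition~\ref{prop:grand-2}, always keeping the larger of the two indices in the role of the larger-index mean, and one must check that the sign condition $rs>0$ is exactly what guarantees that $[r,s]$ stays inside a single interval on which \eqref{eq:compare-p1} applies. If anything is delicate it is the strict monotonicity of $b_{\cdot,K}$ for negative $r$, but this is routine from Proposition~\ref{prop:grand} and is in any case already being relied on in the surrounding argument.
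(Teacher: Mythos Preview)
Your proof is correct and follows essentially the same strategy as the paper: both arguments combine Proposition~\ref{prop:grand-2} with the monotonicity relations~\eqref{eq:compare-p1} to pass from local non-domination to global non-domination. Your presentation is in fact tidier --- for the case $s>r$ you make the sandwich argument on $\phi$ explicit, whereas the paper states the corresponding strict inequalities somewhat tersely.

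One small remark on the $s<r$ case: you invoke global strict monotonicity of $u\mapsto b_{u,K}$, which (as you note) requires checking from the explicit formulas in Proposition~\ref{prop:grand}, including across $u=0$. An alternative that avoids this bookkeeping is to run the same sandwich argument with $b$ in place of $\phi$: if $F_{s,K}\le F_{r,K}$ with $s<r$ then $b_{s,K}\le b_{r,K}$ by (i), while~\eqref{eq:compare-p1} (together with continuity of $b_{\cdot,K}$ at $0$) gives $b_{s,K}\ge b_{t,K}\ge b_{r,K}$ for all $t\in[s,r]$, forcing $b_{t,K}=b_{r,K}$; then Proposition~\ref{prop:grand-2}(i) yields $F_{t,K}\le F_{r,K}$ for $t$ in the given neighbourhood, a contradiction. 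This is exactly parallel to your treatment of $s>r$ and sidesteps the need to verify strictness directly.
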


\begin{proof}[Proof of Lemma \ref{lem:compare}] 
Since $F_{r,K}$ is not dominated by any $F_{s,K}$ for $s$ in a neighbourhood of $r$,
we obtain from Proposition \ref{prop:grand-2} (i) that $b_{r,K} > b_{s,K}$ for all $s>r$
using monotonicity of $b_{r,K}$ in \eqref{eq:compare-p1}.
Similarly, $b_{r,K} K^{-1/r} < b_{s,K} K^{-1/s}$ for all $s<r$ with $rs>0$.
Using Proposition \ref{prop:grand-2} (i) and (ii),
we know that $F_{r,K}$ is not dominated by $F_{s,K}$ if $rs>0$.
Also, by Proposition \ref{prop:grand-2} (ii), $F_{r,K}$ is not dominated by $F_{s,K}$ if $s<r$ and $rs\le 0$. 
Therefore, $F_{r,K}$ is admissible within the M-family.   
\end{proof}

\subsection{Proof of Proposition \ref{pr:ratio}}
\label{app:ratio}

\begin{proof}[Proof of Proposition \ref{pr:ratio}]
  Let $\boldsymbol\epsilon=(\epsilon,\dots,\epsilon,1)\in \R^K$ and $\boldsymbol\epsilon'=(\epsilon,1,\dots,1)\in \R^K$
  for some $\epsilon>0$.
  \begin{enumerate}[(i)]
  \item
    By definition,
    $F_{1,K}(\boldsymbol \epsilon) \ge 2 /K$ and $F^*_{1,K}(\boldsymbol\epsilon) \le \frac{2K}{K-2}\epsilon \le 6\epsilon$.
    Hence, ${F^*_{1,K}(\boldsymbol\epsilon)}/{F_{1,K}(\boldsymbol\epsilon)} \to 0$ as $\epsilon\downarrow 0$.      
  \item
    By definition, 
    $F_{0,K}(\boldsymbol \epsilon')  = \epsilon^{1/K} c$ for some constant $c>0$ and
    $F^*_{0,K}( \boldsymbol \epsilon' ) \le \epsilon c' $ for some constant $c'>0$.
    Hence,  
    ${F^*_{0,K}(\boldsymbol \epsilon' )}/{ F_{0,K}(\boldsymbol \epsilon')} \to 0$ as $\epsilon \downarrow 0$.      
  \item
    Write $c:=c_{-1}$. For any $\mathbf p\in (0,\infty)^K$, we have
    \begin{align*}
      \frac{F^*_{-1,K}(\mathbf{p})}{F_{-1,K}(\mathbf{p})}
      & =
      \bigwedge_{m=1}^K \frac{M_{-1,m} (\mathbf{p}_m) / M_{-1,m} (\mathbf{v}_m(c))}{M_{-1,K} (\mathbf{p}) / M_{-1,K} (\mathbf{v}_K(c))} \\
      & =
      \bigwedge_{m=1}^K
      \left(
        \frac{c^{-1} + (m-1)(1-(K-1)c)^{-1}}{c^{-1} + (K-1)(1-(K-1)c)^{-1}} \times \frac{\sum_{k=1}^K p_{(k)}^{-1}}{\sum_{k=1}^mp_{(k)}^{-1}}
      \right)\\
      &\ge
      \bigwedge_{m=1}^K
      \frac{1-(K-1)c+(m-1)c}{1-(K-1)c+(K-1)c}
      =
      1-(K-1)c,
    \end{align*}
    where $\mathbf{v}_m(c):=(c,d,\dots,d)$ with $m-1$ entries of $d:=1-(K-1)c$.
    Taking $\mathbf p=\boldsymbol \epsilon'$ and letting $\epsilon\downarrow 0$ justifies the infimum value.
  \item
    Take any $\mathbf{p} $ and let $\alpha = \bigwedge_{k=1}^K p_{(k)}/k$.
    Without loss of generality, we assume $\alpha K \ell_K \le 1$ and hence $H^*_K(\mathbf p)\le H_K(\mathbf p)\le 1$.
    Since $H^*_K$ is homogeneous, symmetric and increasing, we have
    \begin{equation}\label{eq:Hk-improve}
      H^*_K(\mathbf{p})
      \ge
      H^*_K (\alpha,2\alpha,\dots,K\alpha)
      =
      \alpha K\ell_K \gamma_K
      =
      \gamma_K  H_K(\mathbf p).
    \end{equation}
    The minimum ratio $H^*_K(\mathbf{p}) /H_K(\mathbf p) = \gamma_K$
    is attained by $\mathbf{p} = (\alpha,2\alpha,\dots,K\alpha)$ for $\alpha\in (0,1/K\ell_K]$. 
  \item
    We continue to write $c=c_{-1}$.
    Proposition 6 of \citet{Vovk/Wang:2020Biometrika} gives that $b_{-1,K} \sim \log K$,
    and with Proposition \ref{prop:grand} we get $c(1-(K-1)c) \sim 1/ (K \log K)$.
    Since $c\in(0,1/K)$, the above implies $K c\to 0$ as $K\to\infty$,
    and this further implies $c\sim 1/(K\log K)$.
    Next, we look at the quantity
    \[
      y_K
      :=
      \frac 1 {\gamma_K }
      =
      \max \left\{ y\ge 1 : \sum_{k=1}^{ K}\frac{\id_{\{y\le K/k\}}}{ \lceil k  y \rceil  } \ge 1 \right\}.
    \]
    Note that $ y':= \lfloor y_K \rfloor + 1$ satisfies
    $\sum_{k=1}^{ K}\frac{\id_{\{y'\le K/k\}} }{ \lceil k y' \rceil  } <1$,
    and we get
    \[
      1 > \sum_{i=1}^{  K }\frac{\id_{\{y'\le K/k\}} }{ ky'}  = \frac{1}{y'} \ell_{\lfloor K/y'\rfloor} \ge \frac{\log K - \log y'}{y'},
    \]
    where the last inequality is due to $\ell_k \ge \log(k+1)$ for all $k\in\N$.
    Hence, $y' +  \log y'  > \log K$,
    which implies $y' > \log K - \log \log K$ and thus $y_K \ge \lfloor \log K - \log \log K \rfloor$.
    On the other hand,
    Theorem \ref{thm:Simes} implies that $y_K\le  H_K/S_K =  \ell_K \le \log K+1$.
    Therefore, $y_K\sim \log K$ as $K\to\infty$.
    \qedhere
  \end{enumerate}
\end{proof}

\subsection{Naive procedure for merging p-values}\label{app:naive}

As we saw in Section~\ref{sec:e},
p-to-e merging is easy.
We can restate it formally as follows.

\begin{corollary}
  The class of admissible p-to-e merging functions
  coincides with the class of functions \eqref{eq:general},
  $f_1,\dots,f_K$ ranging over the admissible calibrators and $(\lambda_1,\dots,\lambda_K)$ over $\Delta_K$.
\end{corollary}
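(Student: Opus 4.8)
The plan is to prove the two inclusions separately: ``$\subseteq$'' (every admissible p-to-e merging function has the stated form) is short, and ``$\supseteq$'' (every function of that form is admissible) is the substantial part. Throughout I use the elementary fact that every calibrator is dominated by an admissible calibrator (cf.\ \citet{Vovk/Wang:E}).

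For ``$\subseteq$'', let $F$ be an admissible p-to-e merging function. By Proposition~\ref{pr:dual}, $F$ is dominated by a function $G$ of the form \eqref{eq:general} for some calibrators $f_1,\dots,f_K$ and some $(\lambda_1,\dots,\lambda_K)\in\Delta_K$; replacing each $f_k$ by an admissible calibrator $\widehat f_k\ge f_k$ only enlarges $G$, and the resulting $\widehat G(\mathbf p):=\sum_k\lambda_k\widehat f_k(p_k)$ is again a p-to-e merging function by Proposition~\ref{pr:dual}. Since $\widehat G\ge F$ and $F$ is admissible, $F=\widehat G$, which is of the required form.

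For ``$\supseteq$'', fix $G(\mathbf p)=\sum_k\lambda_k f_k(p_k)$ with admissible calibrators $f_k$ and $(\lambda_1,\dots,\lambda_K)\in\Delta_K$; it is a p-to-e merging function by Proposition~\ref{pr:dual}. Suppose a p-to-e merging function $H$ dominates it, $H\ge G$. Applying Proposition~\ref{pr:dual} to $H$ and then replacing calibrators by admissible ones as above, we obtain admissible calibrators $g_1,\dots,g_K$ and $(\mu_1,\dots,\mu_K)\in\Delta_K$ such that $G'(\mathbf p):=\sum_k\mu_k g_k(p_k)$ satisfies $G'\ge H\ge G$. The crucial point is that, since admissible calibrators integrate to $1$ and vanish on $(1,\infty)$, for every $T>1$ and independent $P_1,\dots,P_K\sim\mathrm U[0,T]$ we have $\E^Q[G(\mathbf P)]=\frac1T\sum_k\lambda_k\int_0^1 f_k=\frac1T=\E^Q[G'(\mathbf P)]$; combined with $G'\ge G$ and $\E^Q[G'(\mathbf P)]\le 1$ this forces $G'=G$ Lebesgue-almost everywhere on $[0,T]^K$, and hence, letting $T$ range over $(1,\infty)$, on all of $[0,\infty)^K$. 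A routine variable-separation argument then yields one-variable identities $\lambda_k f_k-\mu_k g_k=c_k$ almost everywhere with $\sum_k c_k=0$; testing on $(1,T)$, where all calibrators vanish, gives $c_k=0$, so $\lambda_k f_k=\mu_k g_k$ almost everywhere. Because admissible calibrators are decreasing and upper semicontinuous, hence left-continuous on $(0,\infty)$, this identity upgrades from ``almost everywhere'' to ``everywhere''; integrating over $(0,1)$ gives $\lambda_k=\mu_k$, and then $f_k=g_k$ whenever $\lambda_k>0$ (with both equal to $\infty$ at $0$). Therefore $G'=G$ identically, so $H=G$, and $G$ is admissible.

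The main obstacle is the final upgrade in the ``$\supseteq$'' direction. Because strict domination of p-to-e merging functions is defined as differing at even a single point, the almost-everywhere identity $G=G'$ delivered by the expectation argument is not enough on its own; one must pin down the exact pointwise values of the calibrators, and this is precisely where their admissibility --- through upper semicontinuity, the normalization $\int_0^1 f=1$, and the boundary value $f(0)=\infty$ --- is indispensable. The expectation identity itself, and the device of using $\mathrm U[0,T]$ marginals with $T>1$ to kill the separation constants $c_k$, are the steps that need a little care; the rest is bookkeeping.
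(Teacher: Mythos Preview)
Your proof is correct. The paper's own proof is a one-line citation: it combines Proposition~\ref{pr:dual} with (a slight generalization of) Proposition~G.2 of \citet{Vovk/Wang:E}, so almost all the work is deferred to the external reference. You have effectively reconstructed, in a self-contained way, what that external result supplies.

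The ``$\subseteq$'' direction is handled identically in spirit. For ``$\supseteq$'' your expectation trick with independent $\mathrm U[0,T]$ marginals, $T>1$, is a clean device: it simultaneously (i) forces the comparison $G'\ge G$ to collapse to a Lebesgue-a.e.\ equality, since both sides integrate to exactly $1/T$; and (ii) provides, via the region $(1,T)$ where all calibrators vanish, the anchor that kills the separation constants $c_k$. The final pointwise upgrade using upper semicontinuity (hence left-continuity for decreasing functions) and the normalization $\int_0^1 f_k=1$ is where admissibility of the calibrators is genuinely used, and you identify this correctly as the crux. This is a standard line of argument for results of this type and is almost certainly what the cited Proposition~G.2 does as well, so your proof should be regarded as essentially the same approach, worked out in full rather than cited.
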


\begin{proof}
  Combine Proposition \ref{pr:dual} with a slightly generalized version (with the same proof)
  of \citet[Proposition G.2]{Vovk/Wang:E}.
\end{proof}

A dual notion to p-to-e calibrators is that of e-to-p calibrators;
the latter are functions that transform e-variables into p-variables.
It turns out that the only admissible e-to-p calibrator
is the reciprocal function $p\mapsto1/p$ \citep[Proposition 2.2]{Vovk/Wang:E}.
The ease of merging e-values suggests merging p-values using a detour via e-values:
(i) calibrate p-values $p_1,\dots,p_K$ via calibrators $f_1,\dots,f_K$ getting e-values $f_k(p_k)$;
(ii) merge the e-values via weighted arithmetic average, getting $\sum_k\lambda_k f_k(p_k)$;
(iii) calibrate the resulting e-value back to the p-value
\begin{equation}\label{eq:detour}
  F(p_1,\dots,p_K)
  :=
  \frac{1}{\sum_k\lambda_k f_k(p_k)}.
\end{equation}
This detour via e-values is in fact a poor procedure;
e.g., the p-merging function \eqref{eq:detour} is not admissible.
Let us check this.

To check that \eqref{eq:detour} is not admissible,
suppose (temporarily allowing $K=1$), without loss of generality, that all $\lambda_k$ are positive
and that all $f_k$ are admissible and so upper semicontinuous.
Arguing indirectly, suppose \eqref{eq:detour} is admissible and $c>1$.
We then have
\begin{equation}\label{eq:sup}
  \sup_P
  P
  \left(
    \left\{
      (p_1,\dots,p_K)\in[0,1]^K:
      \sum_k \lambda_k f_k(p_k)
      \ge
      c
    \right\}
  \right)
  =
  \frac1c,
\end{equation}
$P$ ranging over the probability measures on $[0,1]^{\infty}$ with the uniform marginals.
Since this is true for any $c$, at least one of the $f_k$ is unbounded on $(0,1]$.
Now let us fix a $c>1$.
Since the set of probability measures $P$ is compact in the topology of weak convergence
and the set in \eqref{eq:sup} is closed, the supremum in \eqref{eq:sup} is attained,
and so $\sum_k \lambda_k f_k(p_k)=c$ $P$-a.s.;
this contradicts one of the $f_k$ being unbounded on $(0,1]$.

As we can see, the naive procedure does not produce useful p-merging functions,
but it turns out that it can be repaired.
In the following somewhat informal argument we will ignore issues of measurability.
To recover any p-merging function,
it suffices to perform the detour via e-values for each rejection region \eqref{eq:region} separately.
Namely, for any $\epsilon\in(0,1)$:
(i) calibrate p-values $p_1,\dots,p_K$ via calibrators $f_{1,\epsilon},\dots,f_{K,\epsilon}$
  getting e-values $e_{k,\epsilon}=f_{k,\epsilon}(p_k)$.
(ii) Merge the e-values via weighted arithmetic average, getting $e_{\epsilon}=\sum_k\lambda_{k,\epsilon} e_{k,\epsilon}$.
(iii) Include $(p_1,\dots,p_K)$ in $R_\epsilon$ if $1/e_{\epsilon}\le\epsilon$.
If $f_{k,\epsilon}$ are chosen in such a way that $R_\epsilon$ is increasing in $\epsilon$,
this will be a p-merging family
(in the sense of satisfying
$Q(\mathbf P \in R_\epsilon) \le \epsilon$ for all $\epsilon\in (0,1)$ and $\mathbf P\in \mathcal P_Q^K$).
And vice versa, by the duality theorem in the form of Proposition~\ref{pr:dual},
for any p-merging function $F$ and any $\epsilon\in(0,1)$,
the rejection region $R_{\epsilon}(F)$ will be rejected in the sense
\[
  (p_1,\dots,p_K)\in R_{\epsilon}(F)
  \Longrightarrow
  \frac{1}{\sum_k\lambda_{k,\epsilon} f_{k,\epsilon}(p_k)} \le \epsilon
\]
for suitably chosen $f_{k,\epsilon}$ and $\lambda_{k,\epsilon}$.

The conclusion of Proposition~\ref{pr:dual},
as applied to $F$ that is constant in a region $R$ and zero outside $R$,
can be strengthened if we assume that $R$ is a rejection region of an admissible p-merging function.
The proof of Theorem~\ref{th:e} also proves the following proposition.

\begin{proposition} 
  For any admissible p-merging function $F$ and $\epsilon\in(0,1)$,
  there exist $(\lambda_1,\dots,\lambda_K)\in\Delta_K$ and admissible calibrators $f_1,\dots,f_K$
  such that
  \[
    F(\mathbf p) \le \epsilon ~~\Longleftrightarrow~~ \sum_{k=1}^K \lambda_k f_k(p_k) \ge \frac{1}{\epsilon}.
  \]
  If $F$ is symmetric, then there exists an admissible calibrator $f$ such that
  \[
    F(\mathbf p) \le \epsilon ~~\Longleftrightarrow~~ \frac1K\sum_{k=1}^K f(p_k) \ge \frac{1}{\epsilon}.
  \]
\end{proposition}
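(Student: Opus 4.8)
The statement is the level-by-level reformulation of Theorems~\ref{th:e} and~\ref{pr:e}, with the scaling moved from the argument of the calibrators to the threshold, so the plan is to reuse those proofs. When $F$ is homogeneous this is immediate: Theorem~\ref{th:e} yields $(\lambda_1,\dots,\lambda_K)\in\Delta_K$ and admissible calibrators $f_1,\dots,f_K$ with $R_\epsilon(F)=\epsilon\{\mathbf p:\sum_k\lambda_k f_k(p_k)\ge1\}$, i.e. $F(\mathbf p)\le\epsilon\Leftrightarrow\sum_k\lambda_k f_k(p_k/\epsilon)\ge1$; setting $\tilde f_k(x):=\epsilon^{-1}f_k(x/\epsilon)$ and using that $f_k$ vanishes on $(1,\infty)$ gives $\int_0^1\tilde f_k(x)\d x=\int_0^{1/\epsilon}f_k(y)\d y=1$, so each $\tilde f_k$ is again an admissible calibrator ($\tilde f_k$ decreasing, upper semicontinuous, $\tilde f_k(0)=\infty$), and $F(\mathbf p)\le\epsilon\Leftrightarrow\sum_k\lambda_k\tilde f_k(p_k)\ge1/\epsilon$. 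The symmetric refinement follows in the same way from Theorem~\ref{pr:e}, which permits $\lambda_k\equiv1/K$ and $f_k\equiv f$.

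For a general admissible $F$ I would re-run the proof of Theorem~\ref{th:e} at the single fixed level $\epsilon$. By Propositions~\ref{prop:precise-p} and~\ref{prop:lsc}, $F$ is precise and lower semicontinuous, so $R_\epsilon(F)$ is a closed lower set and the duality relation~\eqref{eq:duality-eq} applied to $\id_{R_\epsilon(F)}$ has value $\epsilon$; choose non-negative left-continuous $g_1^\epsilon,\dots,g_K^\epsilon$ attaining it, so that $\bigoplus_k g_k^\epsilon\ge\id_{R_\epsilon(F)}$ and $\sum_k\int_0^1 g_k^\epsilon(x)\d x=\epsilon$. The implication~\eqref{eq:jm} together with a second application of duality forces $g_k^\epsilon=0$ on $(\epsilon,\infty)$, and with $A_\epsilon:=\{\mathbf p:\sum_k g_k^\epsilon(p_k)\ge1\}$ one gets $R_\epsilon(F)\subseteq A_\epsilon$ and, by Markov's inequality, $\sup_{\mathbf P\in\mathcal P_Q^K}Q(\mathbf P\in A_\epsilon)\le\epsilon$. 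Where the proof of Theorem~\ref{th:e} invoked homogeneity to upgrade this inclusion to $R_\epsilon(F)=A_\epsilon$, here I would argue from admissibility directly: if $R_\epsilon(F)\subsetneq A_\epsilon$, the function $H$ equal to $F$ off $A_\epsilon\setminus R_\epsilon(F)$ and equal to $\epsilon$ on $A_\epsilon\setminus R_\epsilon(F)$ is increasing and satisfies $H\le F$ with strict inequality somewhere; it is a p-merging function because at levels $\gamma\le\epsilon$ its rejection region is $R_\gamma(F)$ or $A_\epsilon$ (each with $\sup_{\mathbf P}Q(\mathbf P\in\cdot)\le\gamma$), while at levels $\gamma>\epsilon$, choosing a dual-optimal decomposition of $\id_{R_\gamma(F)}$ that dominates $g_k^\epsilon$ on $[0,\epsilon]$ exhibits $\{H\le\gamma\}=A_\epsilon\cup R_\gamma(F)$ as contained in a set of $\sup_{\mathbf P}$-probability at most $\gamma$. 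This contradicts admissibility of $F$, so $R_\epsilon(F)=A_\epsilon$.

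Once $R_\epsilon(F)=A_\epsilon$, I would finish exactly as in Theorem~\ref{th:e}: put $\lambda_k:=\epsilon^{-1}\int_0^1 g_k^\epsilon(x)\d x$ (so $(\lambda_1,\dots,\lambda_K)\in\Delta_K$) and $f_k:=g_k^\epsilon/\int_0^1 g_k^\epsilon(x)\d x$ on $(0,\infty)$ with $f_k(0):=\infty$, taking $f_k$ to be any admissible calibrator when $\lambda_k=0$; each $f_k$ is decreasing, left-continuous, integrates to $1$ on $(0,1)$, vanishes past $1$, and has $f_k(0)=\infty$, hence is an admissible calibrator, and $F(\mathbf p)\le\epsilon\Leftrightarrow\mathbf p\in A_\epsilon\Leftrightarrow\sum_k g_k^\epsilon(p_k)\ge1\Leftrightarrow\sum_k\lambda_k f_k(p_k)\ge1/\epsilon$. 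If in addition $F$ is symmetric, then $R_\epsilon(F)$ is permutation-invariant, so (as in the proof of Theorem~\ref{pr:e}) the $g_k^\epsilon$ may be replaced by their average and taken identical, which yields $\lambda_k\equiv1/K$ and a single admissible calibrator $f$ with $F(\mathbf p)\le\epsilon\Leftrightarrow\frac1K\sum_k f(p_k)\ge1/\epsilon$. The one genuinely delicate point is the step $R_\epsilon(F)=A_\epsilon$ in the non-homogeneous case: unlike in Theorem~\ref{th:e} there is no scaling turning an enlargement of the level-$\epsilon$ rejection region into a coherent enlargement at every level, so the validity of $H$ at levels above $\epsilon$ must be verified by hand through the compatible dual decompositions indicated, and that is where I expect the main work to lie.
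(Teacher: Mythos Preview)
Your approach coincides with the paper's: its entire proof is the one sentence ``The proof of Theorem~\ref{th:e} also proves the following proposition,'' i.e., one reruns that argument at the fixed level $\epsilon$ and repackages the dual functions $g_k^\epsilon$ as calibrators. Your rescaling (either $\tilde f_k(x):=\epsilon^{-1}f_k(x/\epsilon)$ starting from Theorem~\ref{th:e}, or directly $f_k:=g_k^\epsilon/\int_0^1 g_k^\epsilon$) and your verification that these are admissible calibrators giving the equivalence with threshold $1/\epsilon$ are correct; the symmetric refinement via Theorem~\ref{pr:e} is likewise exactly what is intended.

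You have also correctly located the one point where the argument is incomplete without homogeneity. In the proof of Theorem~\ref{th:e}, the inclusion $R_\epsilon(F)\subseteq A_\epsilon$ is upgraded to equality by building a \emph{homogeneous} dominating function $F'$ and invoking admissibility; this uses $R_\delta(F)=(\delta/\epsilon)R_\epsilon(F)$. The paper offers no replacement for this step, and since the whole paper is confined to homogeneous merging functions (cf.\ the concluding remarks), the proposition is most naturally read in that setting. Your candidate $H$ with $\{H\le\gamma\}=R_\gamma(F)\cup A_\epsilon$ for $\gamma\ge\epsilon$ is the obvious repair, but the key bound $\sup_{\mathbf P}Q(\mathbf P\in R_\gamma(F)\cup A_\epsilon)\le\gamma$ for $\gamma>\epsilon$ rests on your claim that a dual-optimal decomposition of $\id_{R_\gamma(F)}$ can be chosen to dominate the $g_k^\epsilon$. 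That claim is not justified---dual optimizers at different levels have no a priori nesting---so, as you yourself acknowledge, the non-homogeneous case remains open in your write-up, and the paper's one-line proof does not close it either.
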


Let us specialize the modified naive procedure to homogeneous p-merging functions.
According to Theorem~\ref{th:e}, in the homogeneous case
we can use calibrators $f_{k,\epsilon}(x) := f_k(x/\epsilon)/\epsilon$.
The procedure becomes almost as simple as the naive procedure;
both depend on a sequence $f_1,\dots,f_K$ of calibrators as parameter.
If we are interested in homogeneous and symmetric p-merging functions,
the detour via e-values can use calibrators $f_1=\dots=f_K$ and the arithmetic mean as e-merging function
(Theorem~\ref{pr:e}).

\subsection{An additional technical remark on Theorems \ref{th:admissible} and \ref{th:m1}}
\label{app:a8}

\begin{remark}\label{rem:strict-con}
We discuss technical challenges arising in trying to relax the strict convexity (or strict concavity) imposed in Theorem \ref{th:admissible}
and to prove the admissibility of $F_{1,K}^*$ in Theorem \ref{th:m1} for $K\ge 3$. 
Recall in  the proof of Theorem \ref{th:admissible} that the density $h$ is obtained from a distribution with quantile function $f$,
and $h$ is decreasing if $f$ is convex. 
A crucial step in this proof is to verify that the distributions with densities $h_1,\dots,h_K$ are jointly mixable,
which ensures that in \eqref{eq:convex-constr}, if $A$ happens, the vector $(P_1,\dots,P_K)/\alpha=( f^{-1} (X_1),\dots,f^{-1}(X_K))$
satisfies $\sum_{k=1}^K  f(P_k)\ge K$,
so that $(P_1,\dots,P_K)\in R_\alpha(F)$.
The densities $h_1,\dots,h_K$ are obtained from the density $h$ by removing a tiny piece 
$m^* v_k / m_k$ for each $k$; see \eqref{eq:h-cond}.
Since $ m^*   v_k   /m_k$ is tiny, the resulting density is still decreasing (or increasing) if $h$ is strictly decreasing (or strictly increasing),
and hence joint mixability can be obtained from Theorem 3.2 of \citet{WW16}. 
In case the convex function $f$ is linear on some interval (which is the case for $F_{1,K}^*$),
$h$ is constant on this interval.
After removing a tiny piece on this interval from $h$,
the resulting density is no longer monotone, and no result  for joint mixability is available in this case.
Proving joint mixability is known to be a very difficult task, although we suspect that it holds true for the above special case
(if a proof is available, it likely will require a new paper).
Unfortunately, it seems to us that  one could not avoid this task for a generalization of Theorem \ref{th:admissible},
since showing $\sum_{k=1}^K f(P_k)\ge K$ for $h$ with some pieces removed is essential for constructing any counter-example,
at least to the best of our imagination.
\end{remark}

\section{The case $K=2$}
\label{app:K2}

In the simple case $K=2$, where the task is to merge two p-values,
the class of admissible p-merging functions admits an explicit description.

For $E\subseteq[0,1]^K$,
let us set
\[
  \ucp(E)
  :=
  \sup_{\textbf{P}\in\PPP^K_Q}
  Q(\textbf{P}\in E)
\]
and call $\ucp(E)$ the \emph{upper p-probability} of $E$.
In the case $K=2$ upper p-probability admits a simple characterization.

\begin{lemma}
  The upper p-probability of any nonempty Borel lower set $E\subseteq[0,1]^2$ is
  \begin{equation}\label{eq:ucp}
    \ucp(E)
    =
    1
    \wedge
    \inf
    \left\{
      u_1+u_2: (u_1,u_2)\in[0,1]^2\setminus E
    \right\}.
  \end{equation}
\end{lemma}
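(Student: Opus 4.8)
The plan is to establish the two inequalities $\ucp(E) \le 1 \wedge \inf\{u_1+u_2 : (u_1,u_2) \in [0,1]^2 \setminus E\}$ and $\ucp(E) \ge 1 \wedge \inf\{u_1+u_2 : (u_1,u_2) \in [0,1]^2 \setminus E\}$ separately. Write $s := \inf\{u_1+u_2 : (u_1,u_2) \in [0,1]^2 \setminus E\}$ for brevity.

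For the upper bound, first note $\ucp(E) \le 1$ trivially since $Q$ is a probability measure. For the bound $\ucp(E) \le s$, fix any $(u_1,u_2) \in [0,1]^2 \setminus E$; I would show $\ucp(E) \le u_1 + u_2$. Since $E$ is a lower set and $(u_1,u_2) \notin E$, for any point $(p_1,p_2) \in E$ we cannot have both $p_1 \le u_1$ and $p_2 \le u_2$, i.e. $E \subseteq \{p_1 > u_1\} \cup \{p_2 > u_2\}$. Hence for any p-variables $P_1,P_2$, a union bound gives $Q((P_1,P_2) \in E) \le Q(P_1 > u_1) + Q(P_2 > u_2)$ --- wait, this is the wrong direction; instead I would use $E \subseteq \{p_1 \le u_1\}^c \cup \{p_2 \le u_2\}^c$ is not quite it either. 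The correct observation: the \emph{complement} trick. Since $E$ is a lower set and $(u_1,u_2)\notin E$, the ``upper corner'' $[u_1,1]\times[u_2,1]$ relative structure forces $E \subseteq ([0,u_1]\times[0,1]) \cup ([0,1]\times[0,u_2])$ is false too. Let me reconsider: $(p_1,p_2)\in E$ and $E$ a lower set means $(u_1,u_2)\notin E$ implies NOT ($p_1 \le u_1$ AND $p_2 \le u_2$) only when... actually it's the reverse: if $(p_1,p_2) \le (u_1,u_2)$ componentwise and $(p_1,p_2)\in E$, that's fine since $E$ is a lower set, but $(u_1,u_2)\notin E$ would then be contradicted. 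So indeed $E \cap ([0,u_1]\times[0,u_2]) = \varnothing$, hence $E \subseteq \{p_1 > u_1\} \cup \{p_2 > u_2\}$. Therefore $Q((P_1,P_2)\in E) \le Q(P_1 > u_1) + Q(P_2 > u_2)$. This is still not obviously $\le u_1+u_2$. The fix is to use that we want an \emph{upper} bound over all joint laws, and for p-variables $Q(P_i > u_i)$ can be as large as $1$. So this naive union bound fails. The right approach is Strassen/duality, exactly as in the proof of Proposition~\ref{pr:dual}: apply the duality relation \eqref{eq:duality-eq} with $F := \id_E$, giving $\ucp(E) = \min\{\int_0^1 g_1 + \int_0^1 g_2 : g_1(x_1)+g_2(x_2) \ge \id_E(x_1,x_2)\}$ over decreasing $g_1,g_2$, and then choose $g_1 = \id_{[0,u_1]}$, $g_2 = \id_{[0,u_2]}$ (or their left-continuous versions): since $E \cap ([0,u_1]\times[0,u_2]) = \varnothing$ is the wrong containment --- actually I need $g_1(x_1)+g_2(x_2)\ge 1$ on $E$, i.e. $E \subseteq \{x_1 \le u_1\} \cup \{x_2 \le u_2\}$. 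Since $E$ is a lower set, if $(p_1,p_2)\in E$ with $p_1 > u_1$ and $p_2 > u_2$ then $(u_1,u_2)\in E$ by the lower-set property pulling down from $(p_1,p_2)$ --- contradiction. So indeed $E \subseteq \{x_1 \le u_1\} \cup \{x_2 \le u_2\}$, the feasible pair $(g_1,g_2)$ works, and the minimum is $\le u_1 + u_2$. Taking the infimum over $(u_1,u_2) \notin E$ gives $\ucp(E) \le s$, hence $\ucp(E) \le 1 \wedge s$.

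For the lower bound, I would construct, for each $\delta > 0$, explicit p-variables $P_1,P_2$ (on the atomless space $(\Omega,\mathcal A,Q)$) with $Q((P_1,P_2)\in E) \ge (1\wedge s) - \delta$. If $s \ge 1$, one can take any single point $(p_1,p_2)\in E$ interior enough and put a coupling concentrated near the diagonal; more carefully, when $1 \wedge s = 1$, pick $(p_1,p_2)\in E$ and note $E$ lower means $[0,p_1]\times[0,p_2]\subseteq E$; using the construction from the proof of Theorem~\ref{thm:Simes} (a mixture of a permutation-uniform law and $\delta_{(1,1)}$) realizes probability arbitrarily close to $1$. When $1 \wedge s = s < 1$: pick $(u_1,u_2) \in [0,1]^2 \setminus E$ with $u_1 + u_2 < s + \delta$ (so $u_1+u_2 < 1$); the idea is to build a comonotone-type coupling of a $\U[0,1]$-ish pair that spends ``most of its mass'' inside $E$. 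Concretely, since $E$ is a closed-ish lower set with $s = \inf$ of the coordinate-sum over the complement, the boundary of $E$ is a decreasing curve, and one can place $P_1 = U$ for $U\sim\U[0,1]$ on an event of probability close to $1$ arranged so that $(P_1,P_2)$ tracks just inside $E$ along its boundary, with $P_i = 1$ on the remaining small event; the validity $Q(P_i \le \epsilon)\le\epsilon$ is checked directly and the mass landing in $E$ is $\ge s - \delta$. This matches the equality \eqref{eq:jm}-style reasoning. Letting $\delta \downarrow 0$ gives $\ucp(E) \ge 1 \wedge s$.

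The main obstacle I expect is the explicit construction for the lower bound in the regime $s < 1$: one must produce a genuine joint law of two p-variables concentrated (up to $\delta$) on $E$, and the cleanest route is again optimal-transport duality --- invoking the equality in \eqref{eq:duality-eq} (attainment of $\inf$, via part (d) of \citet[Theorem 2.3]{R13}) already gives $\ucp(E) = \min\{\int g_1 + \int g_2\}$, and then a direct combinatorial/geometric argument shows this minimum equals $1 \wedge s$ for lower sets in dimension $2$ (the decreasing functions $g_1,g_2$ can be taken to be indicators of initial segments, reducing the optimization to the one-parameter family $g_1 = \id_{[0,t]}$, $g_2 = \id_{[0,t']}$ with $(t,t')\notin E$). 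So in fact both directions can be obtained simultaneously from \eqref{eq:duality-eq} once one verifies that the optimal $(g_1,g_2)$ for a two-dimensional lower set may be taken as a pair of step functions with a single step --- that verification, using the lower-set geometry, is the crux of the argument.
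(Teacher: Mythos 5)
Your upper bound is essentially correct and, modulo the several false starts you left in, coincides with the paper's argument: from $(u_1,u_2)\notin E$ and $E$ being a lower set you correctly deduce $E\subseteq\{x_1\le u_1\}\cup\{x_2\le u_2\}$, after which a plain union bound $Q((P_1,P_2)\in E)\le Q(P_1\le u_1)+Q(P_2\le u_2)\le u_1+u_2$ already finishes this direction (the paper uses exactly this subadditivity step, for general $K$); the detour through the dual problem with $g_i=\id_{[0,u_i]}$ is feasible but unnecessary.

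The lower bound, however, has a genuine gap. Your proposed primal construction --- a coupling that ``tracks just inside $E$ along its boundary'' --- cannot work in general: a joint law supported on the graph of a decreasing curve with $P_1\sim\U[0,1]$ forces $P_2$ to be a deterministic decreasing function of $P_1$, and the only such curve giving a uniform second marginal is the antidiagonal $x\mapsto 1-x$; for a generic lower set the boundary is not that curve, so the marginals fail to be uniform and the $P_i$ need not be p-variables. Your fallback via duality is also incomplete: you assert that the optimal $(g_1,g_2)$ may be taken to be single-step indicators and call this ``the crux,'' but you neither prove it nor is it needed. The missing idea (used in the paper) is to fix any $t$ strictly below $1\wedge\inf\{u_1+u_2:(u_1,u_2)\notin E\}$ and observe that the entire antidiagonal segment $\{(x,t-x):x\in[0,t]\}$ lies in $E$ (any point on it with coordinate sum $t$ below the infimum cannot be in the complement); then the law that with probability $t$ puts $(P_1,P_2)=(X,t-X)$ for $X\sim\U[0,t]$ and with probability $1-t$ puts $(P_1,P_2)=(Y,Y)$ for $Y\sim\U[t,1]$ has uniform marginals and lands in $E$ with probability at least $t$, giving $\ucp(E)\ge t$. (Alternatively, your dual route can be completed without the single-step claim: for any feasible nonnegative decreasing $(g_1,g_2)$, integrating $g_1(x)+g_2(t-x)\ge1$ over $x\in[0,t]$ yields $\int_0^1 g_1+\int_0^1 g_2\ge t$.) Either way, the antidiagonal segment, not the boundary of $E$, is the object to exploit.
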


\begin{proof}
  Let $E$ be a nonempty lower Borel set in $[0,1]^2$;
  suppose $\ucp(E)$ is strictly less than the right-hand side of \eqref{eq:ucp}.
  Let $t$ be any number strictly between $\ucp(E)$ and the right-hand side of \eqref{eq:ucp}.
  If $\textbf{P}$ is concentrated on
  \begin{equation}\label{eq:antitonic}
    [(t,0),(0,t)]\cup[(t,t),(1,1)],
  \end{equation}
  and each of its components is uniformly distributed on $[0,1]$,
  $\textbf{P}\in E$ with probability at least $t$ since $E$ contains $[(t,0),(0,t)]$.
  Therefore, $\ucp(E)\ge t$.
  This contradiction proves the inequality $\ge$ in \eqref{eq:ucp}.

  As for the opposite inequality, we will check
  \begin{equation*}
    \ucp(E)
    \le
    \inf
    \left\{
      u_1+\dots+u_K: (u_1,\dots,u_K)\in[0,1]^K\setminus E
    \right\}
  \end{equation*}
  for an arbitrary $K\ge2$.
  Let us assume that $E$ does not contain the set of all $(u_1,\dots,u_K)$ with $u_1+\dots+u_K=1$
  (the case when it does is trivial).
  Choose $\epsilon>0$ and $(p_1,\dots,p_K)\in[0,1]^K\setminus E$
  such that $t:=p_1+\dots+p_K\in[\epsilon,1]$ and $E$ contains all $(u_1,\dots,u_K)\in[0,1]^K$
  satisfying $u_1+\dots+u_K=t-\epsilon$.
  Since $E$ is a lower set, we have
  \[
    E
    \subseteq
    \bigcup_{k=1}^K
    \left\{
      (u_1,\dots,u_K)\in[0,1]^K : u_k\le p_k
    \right\},
  \]
  and the subadditivity of $\ucp$ further implies
  \begin{align*}
    \ucp(E)
    &\le
    \sum_{k=1}^K
    \ucp
    \left(\left\{
      (u_1,\dots,u_K)\in[0,1]^K: u_k\le p_k
    \right\}\right)\\
    &=
    \sum_{k=1}^K
    p_k
    =
    t
    \le
    \inf
    \left\{
      u_1+\dots+u_K: (u_1,\dots,u_K)\in[0,1]^K\setminus E
    \right\}
    +
    \epsilon.
  \end{align*}
  It remains to notice that $\epsilon$ can be chosen arbitrarily small.
\end{proof}

There is a natural bijection between the admissible p-merging functions for $K=2$
and increasing right-continuous functions $f:[0,1)\to[0,1]$.
The \emph{epigraph boundary} of such $f$ is the set of points $(u_1,u_2)\in[0,1]^2$
such that $f(u_1-)\le u_2\le f(u_1)$,
where $f(0-)$ is understood to be $0$ and $f(1)$ is understood to be $1$.
A \emph{diagonal curve} is the epigraph boundary of some increasing function.
The admissible p-merging function corresponding to a diagonal curve $A\subseteq[0,1]^2$
is defined by $F(p_1,p_2):=u_1+u_2$, where $(u_1,u_2)\in A$ is the largest point in $A$
that is less than or equal to $(p_1,p_2)$ in the component-wise order
($A$ is linearly ordered by this partial order).

In particular, the only symmetric admissible p-merging function for $K=2$ is Bonferroni.
It corresponds to the identity function $f:u\mapsto u$.

\section{Additional simulation results}
\label{app:simulation}

\begin{figure}
  \begin{center}
    \includegraphics[width=0.48\textwidth]{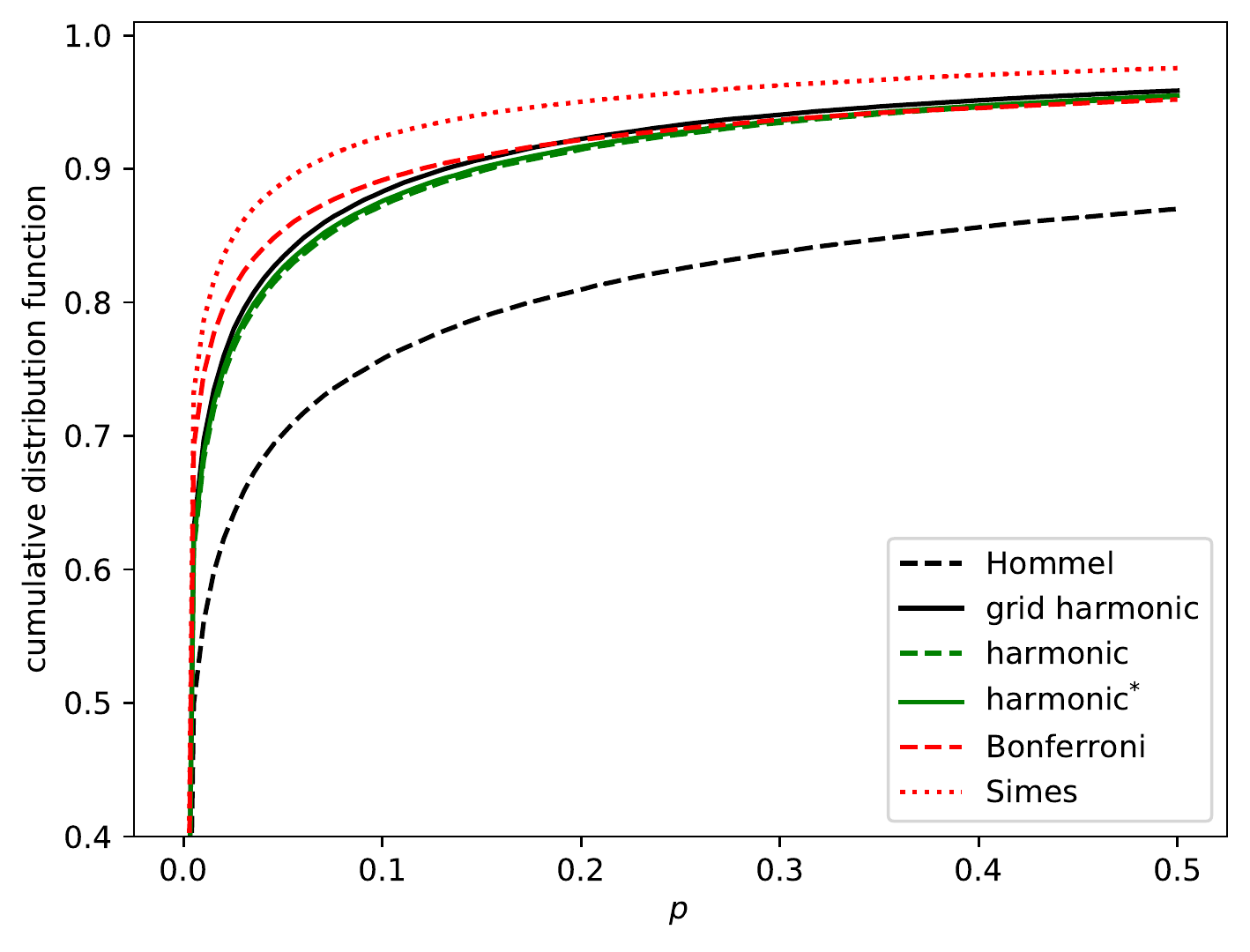}
    \hfill
    \includegraphics[width=0.48\textwidth]{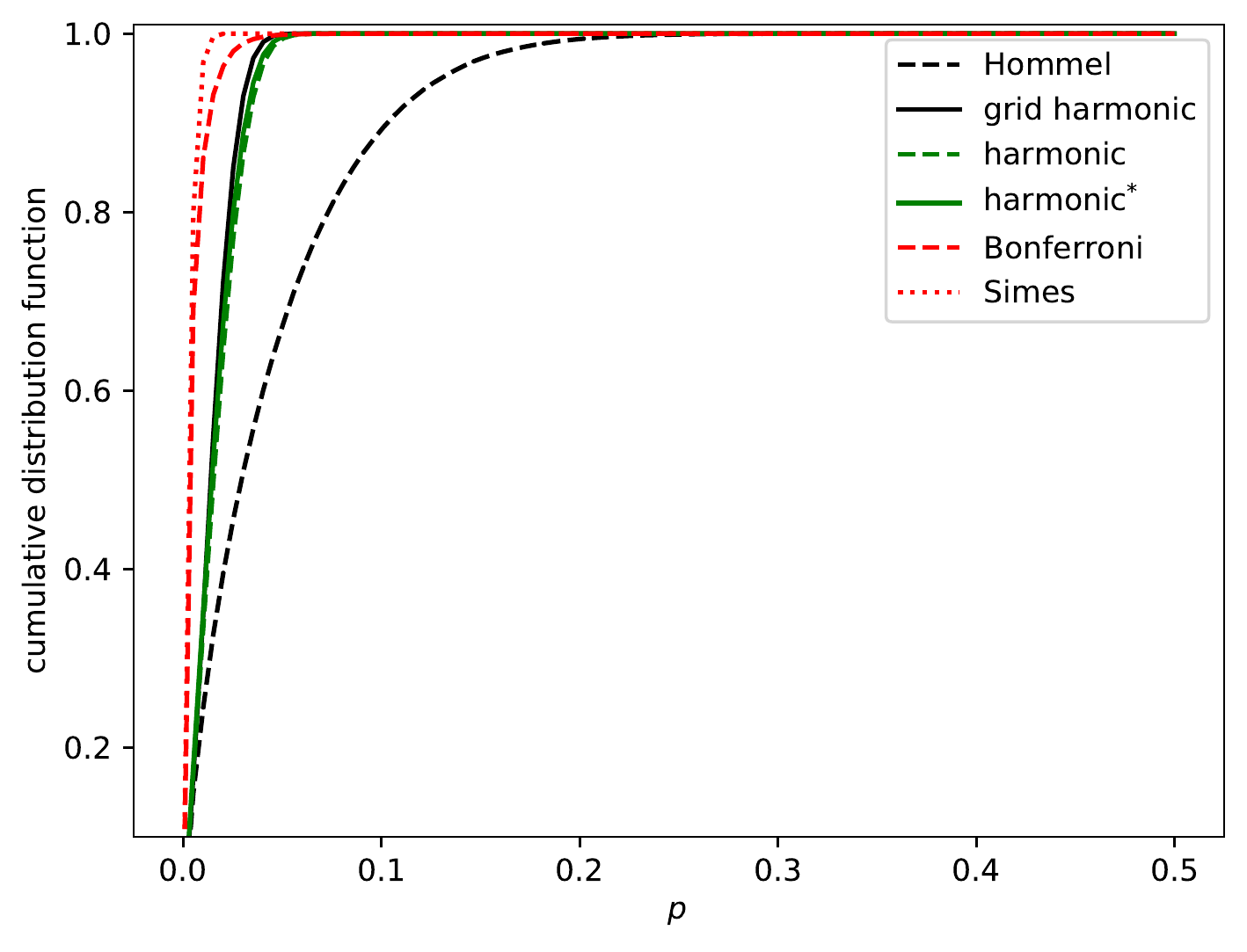}
  \end{center}
  \caption{An analogue of Figure \ref{fig:cdf}
    for $10\%$ of observations from the alternative distribution
    with correlation $0.5$ (left panel) and $0$ (right panel) in place of $0.9$.}
  \label{fig:cdf2}
\end{figure}

\begin{figure}
  \begin{center}
    \includegraphics[width=0.48\textwidth]{cdf_5_0_9_K_10_6_K1_10_3_N_10_5_inv.pdf}
    \hfill
    \includegraphics[width=0.48\textwidth]{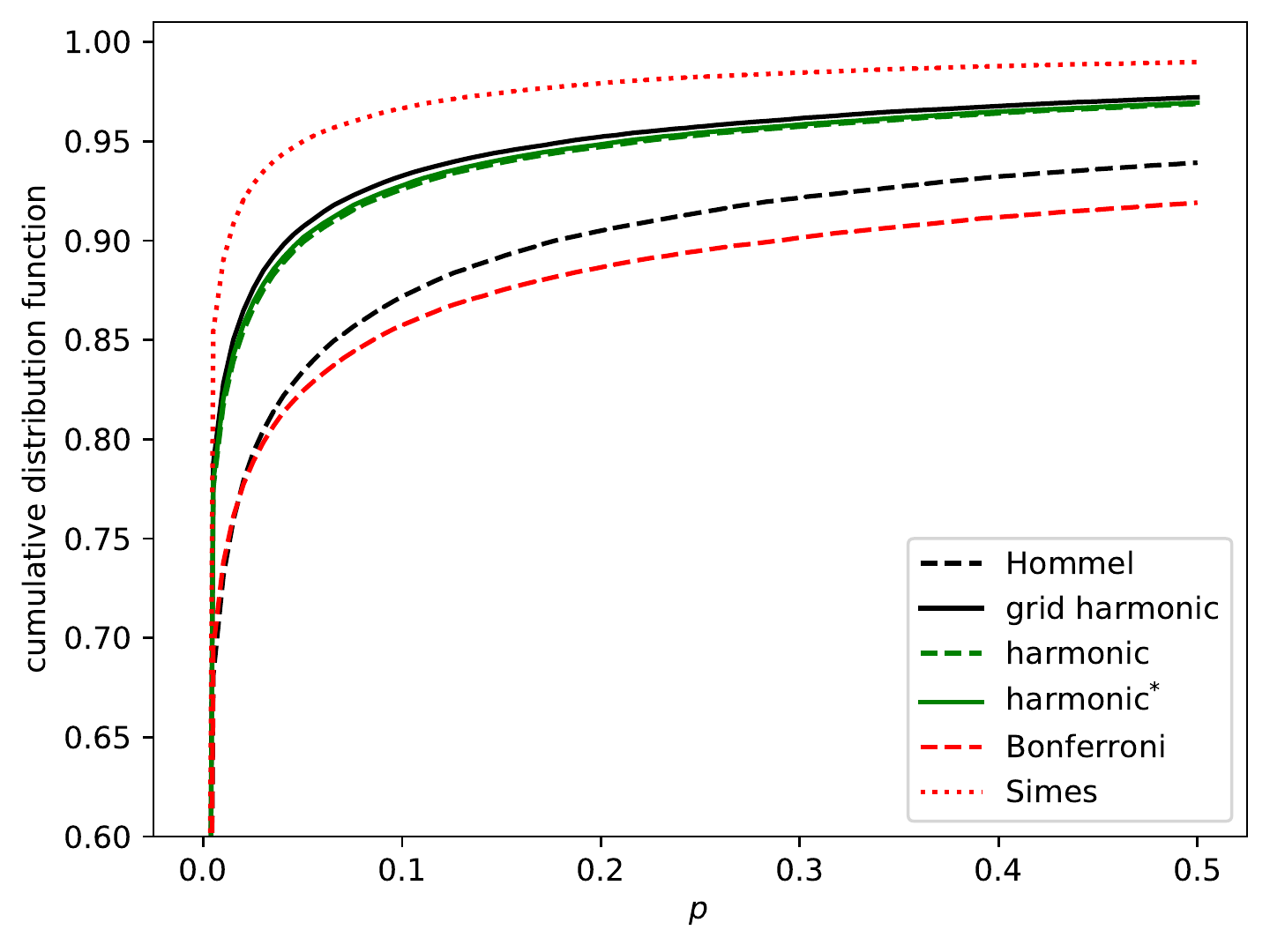}
  \end{center}
  \caption{Figure \ref{fig:cdf} (left panel),
    where $K_1=10^3$ and the correlation is $0.9$ for the bulk of the observations,
    and its counterpart with $K_1:=10^4$ (right panel).}
  \label{fig:cdf3}
\end{figure}

\begin{figure}
  \begin{center}
    \includegraphics[width=0.32\textwidth]{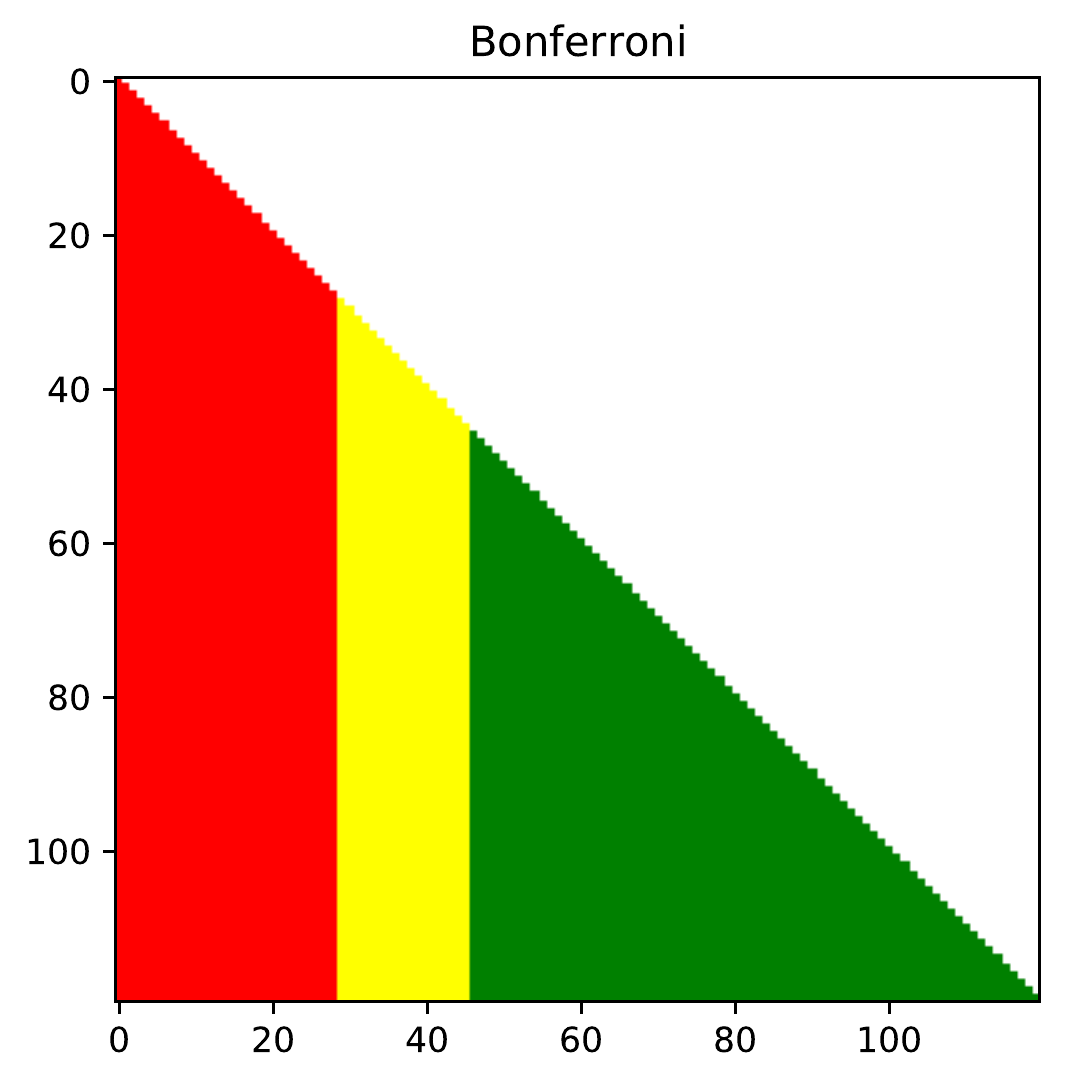}
    \includegraphics[width=0.32\textwidth]{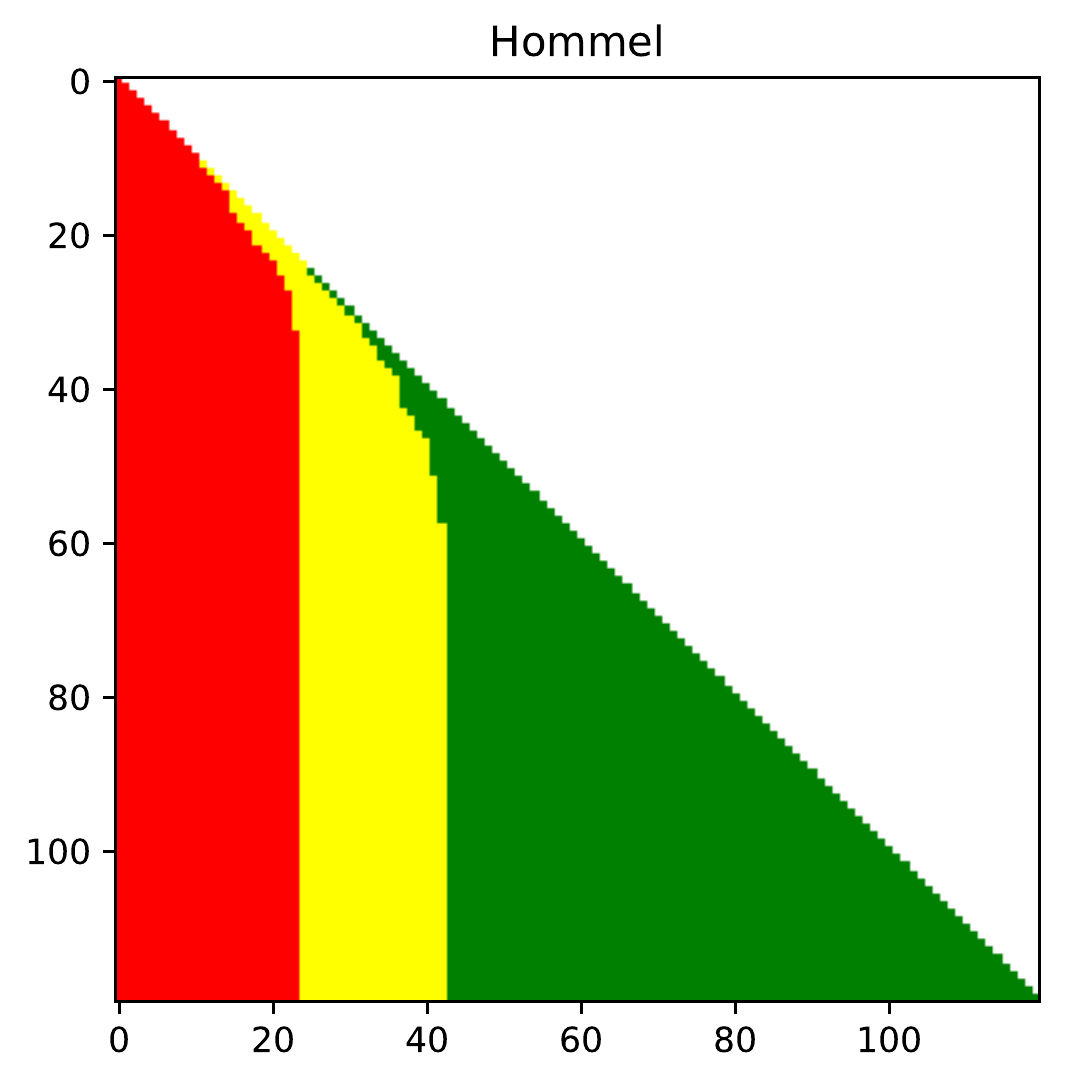}
    \includegraphics[width=0.32\textwidth]{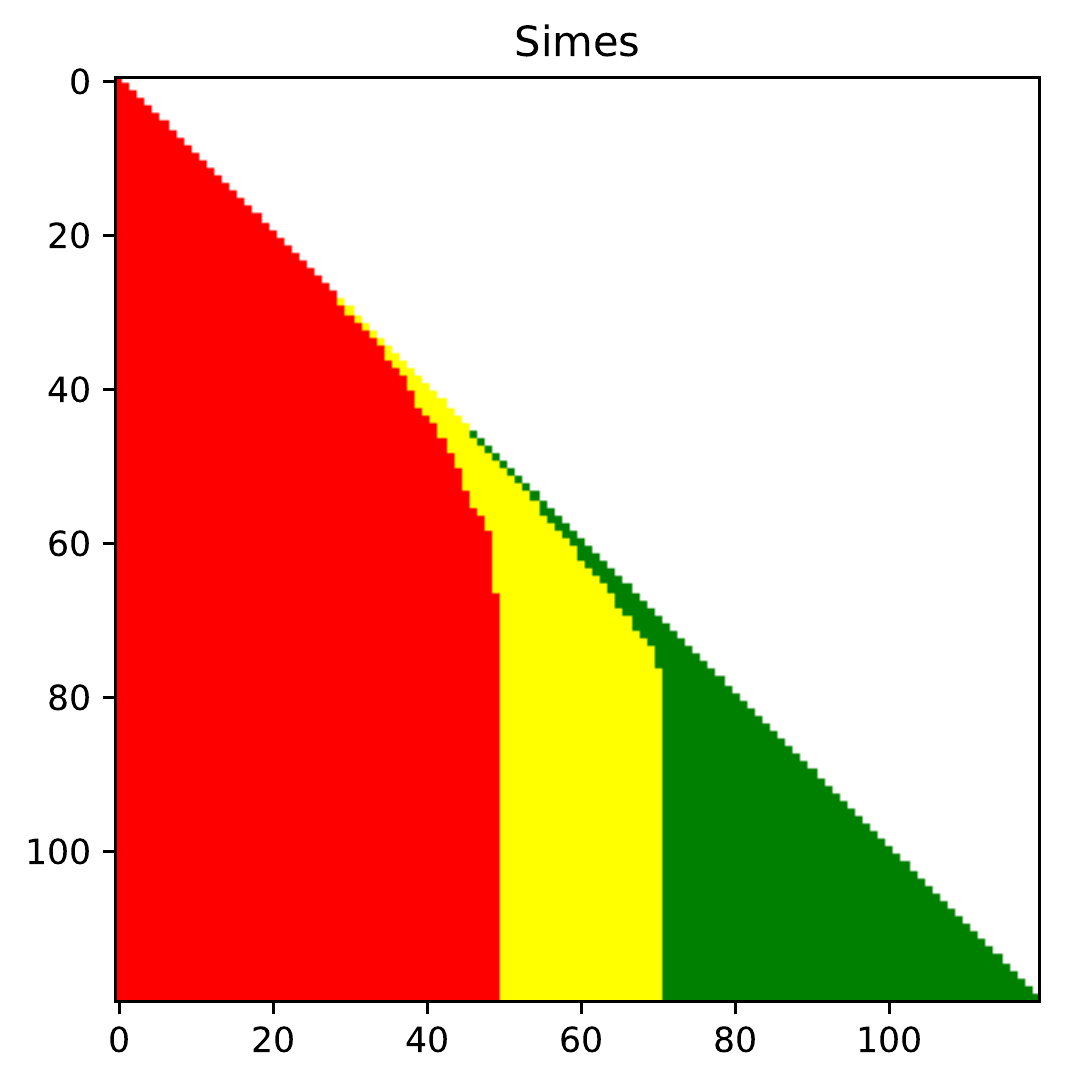} \\
    \includegraphics[width=0.32\textwidth]{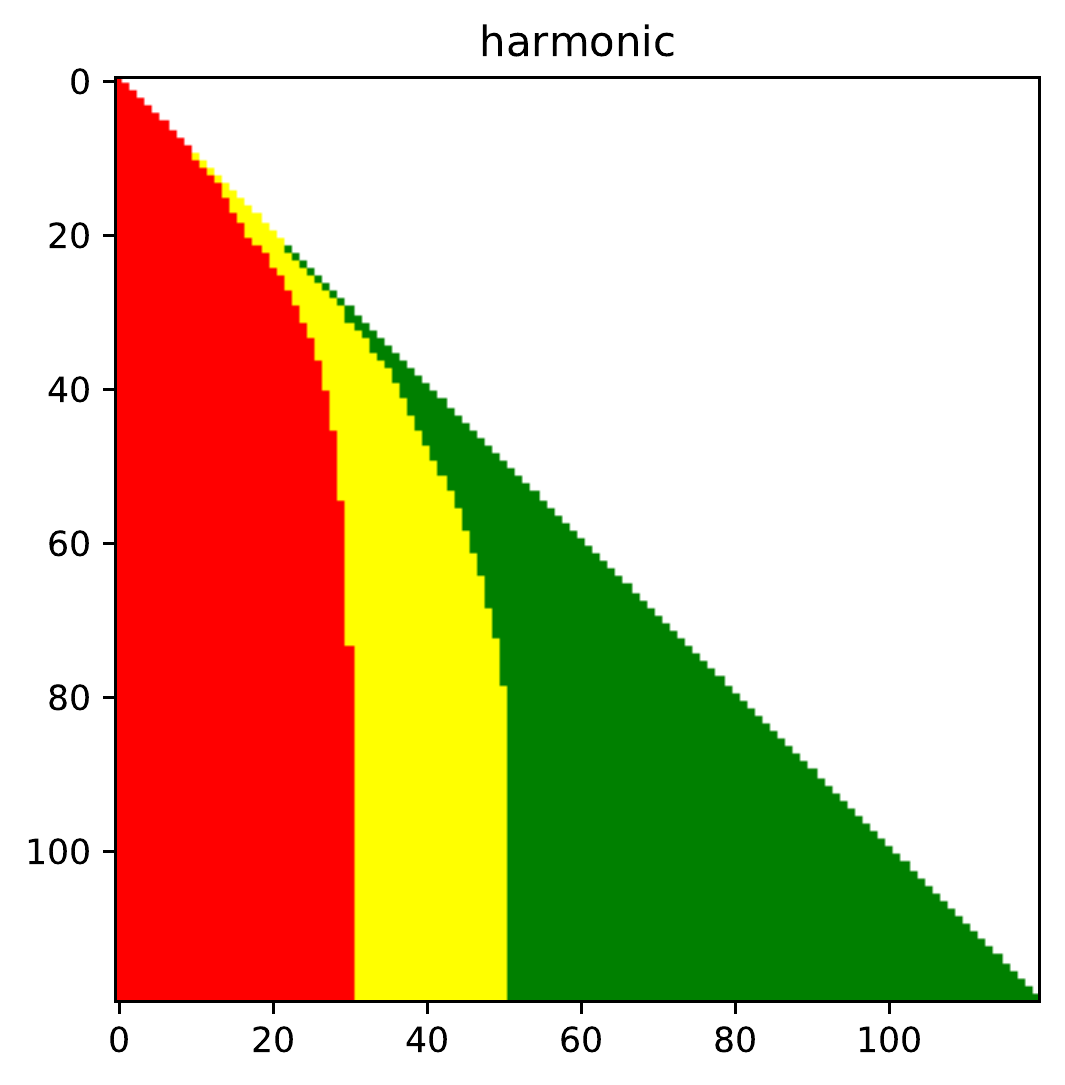}
    \includegraphics[width=0.32\textwidth]{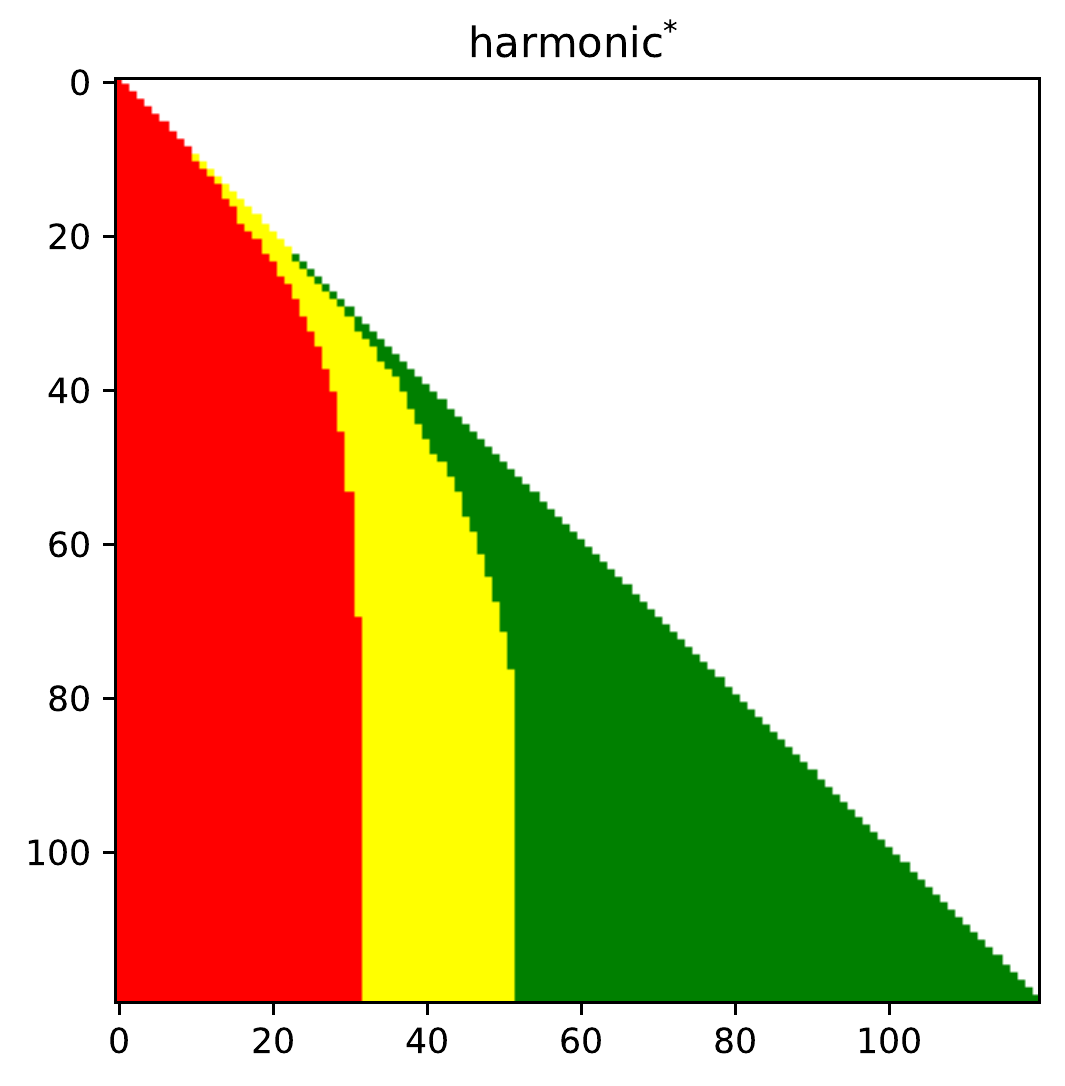}
    \includegraphics[width=0.32\textwidth]{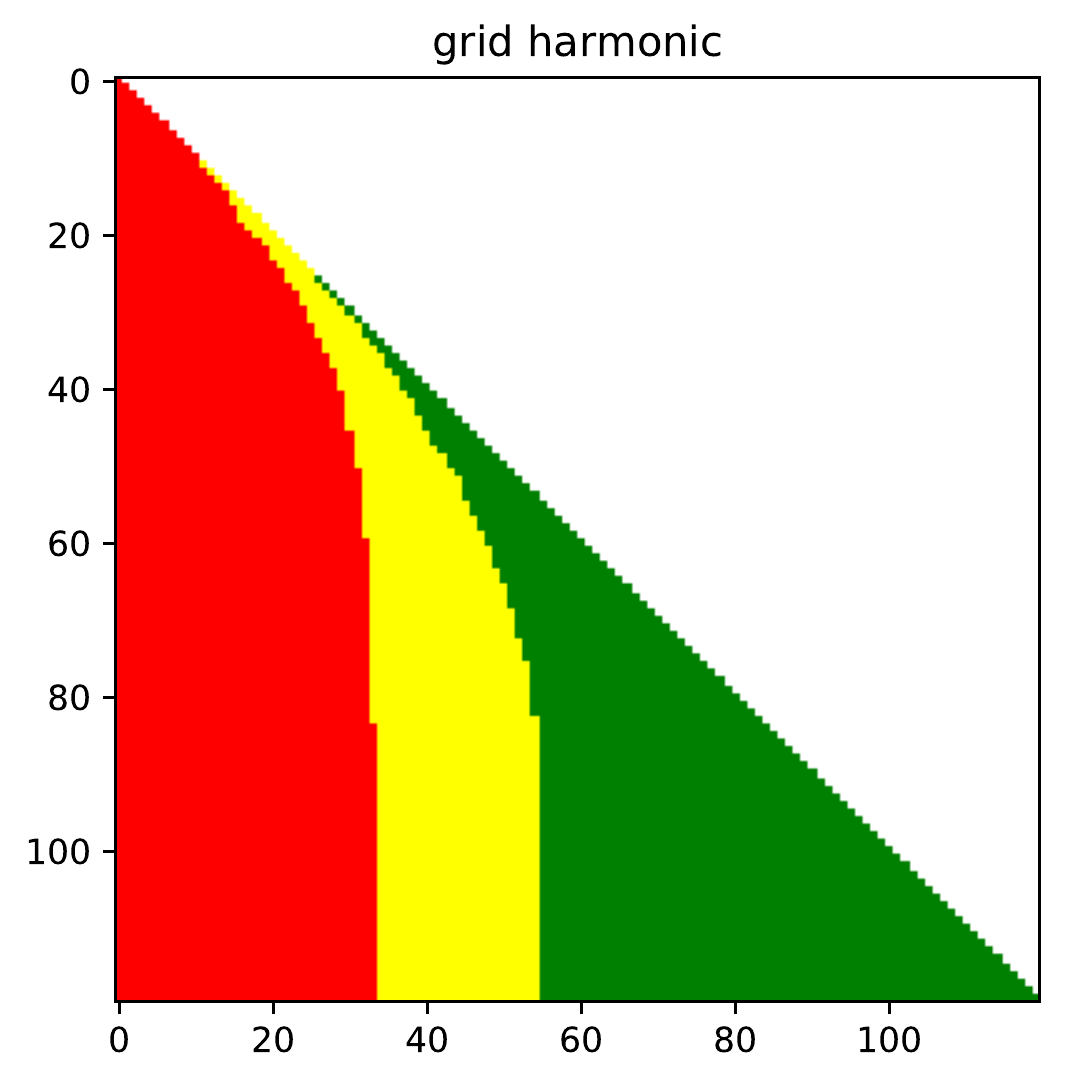}
  \end{center}
  \caption{An analogue of Figure \ref{fig:DM} with correlation $0.5$: GWGS discovery matrices for the simulation data.}
  \label{fig:DM2}
\end{figure}

\begin{figure}
  \begin{center}
    \includegraphics[width=0.32\textwidth]{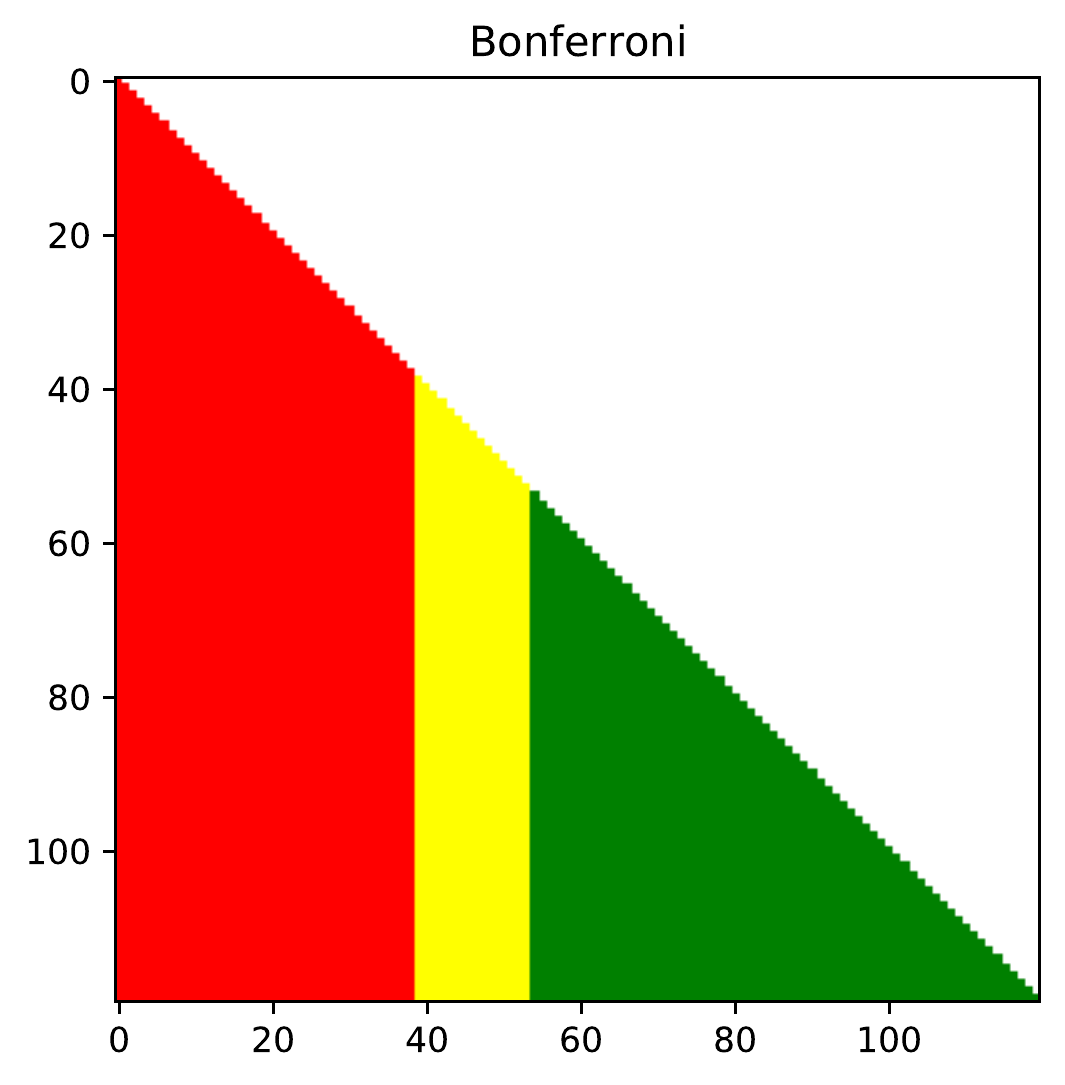}
    \includegraphics[width=0.32\textwidth]{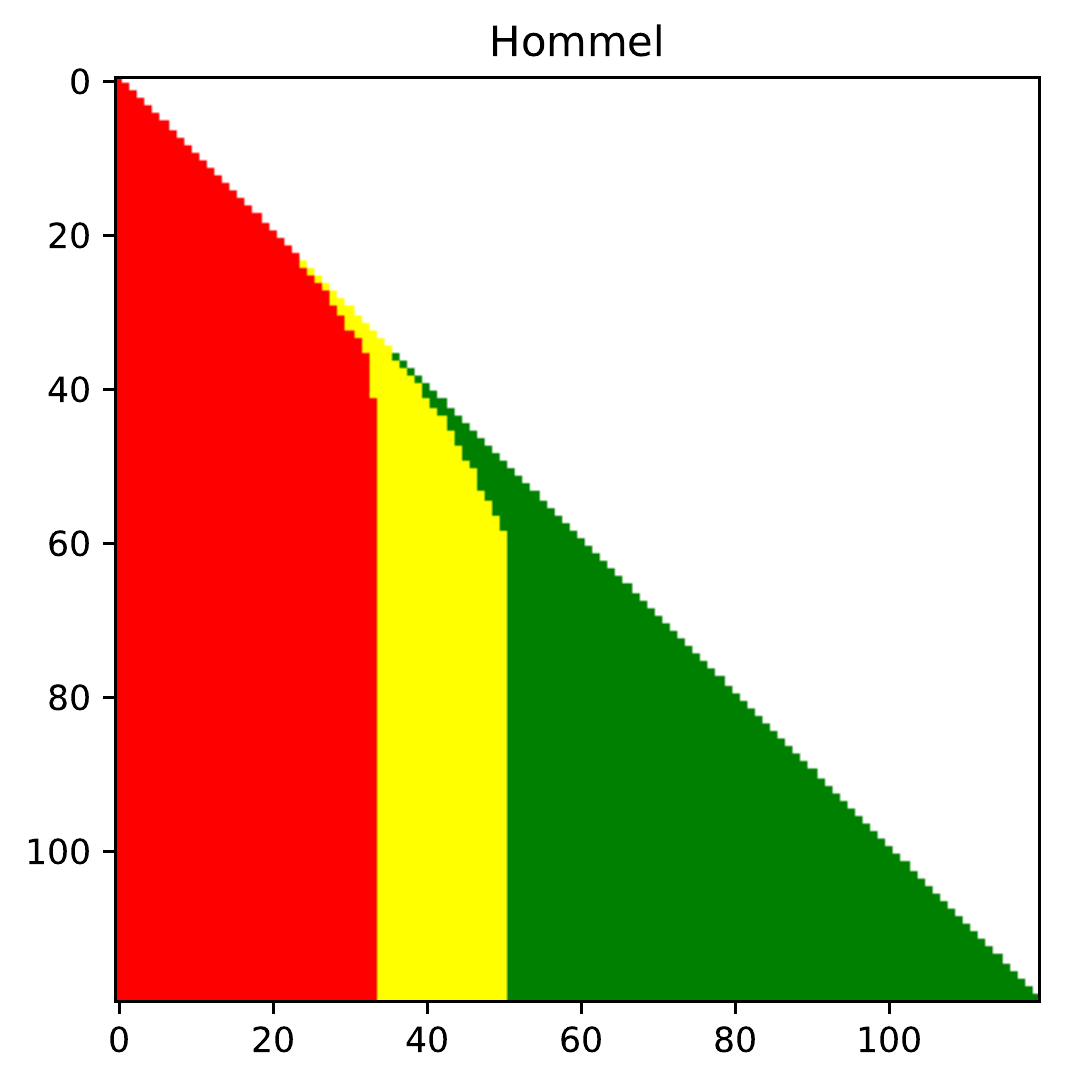}
    \includegraphics[width=0.32\textwidth]{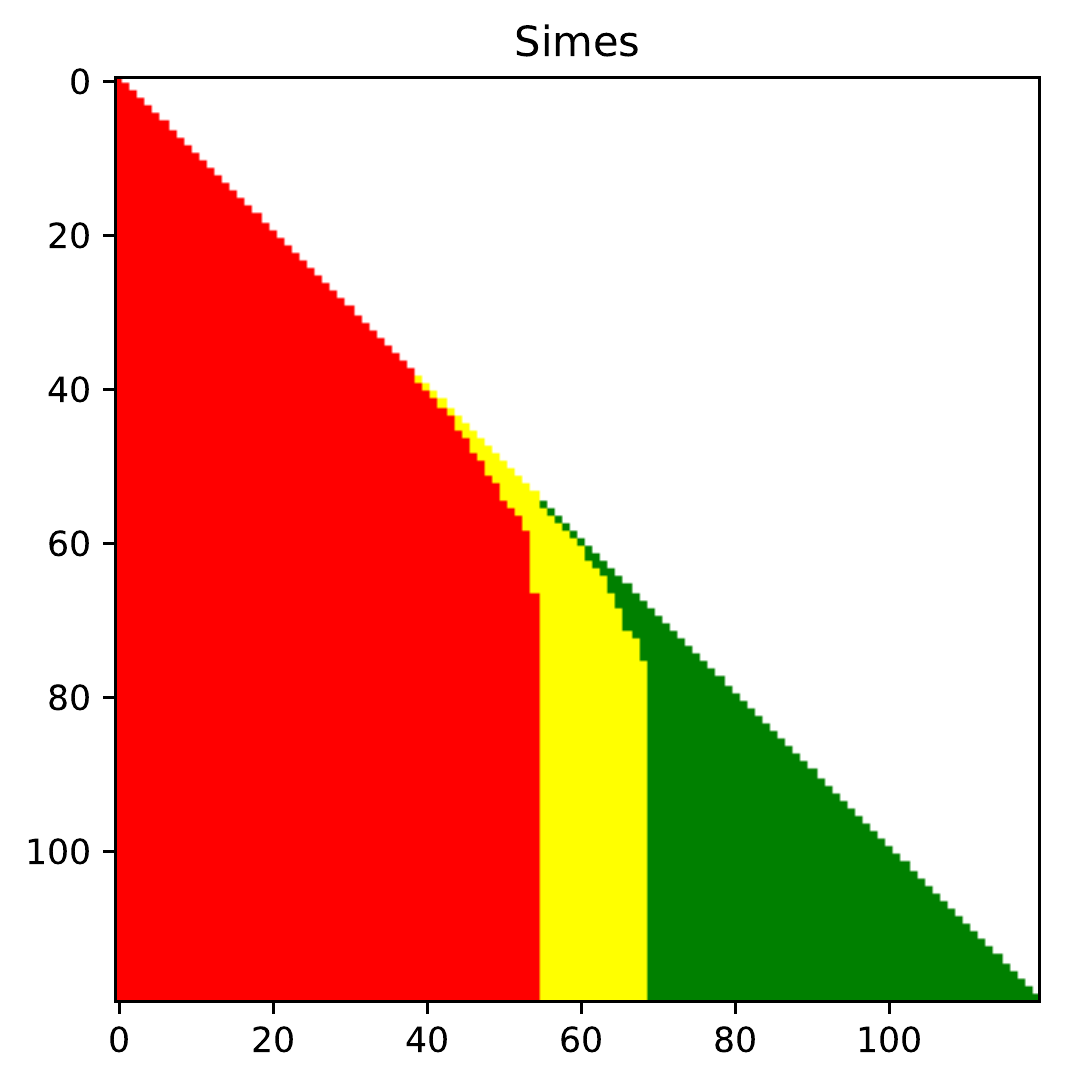} \\
    \includegraphics[width=0.32\textwidth]{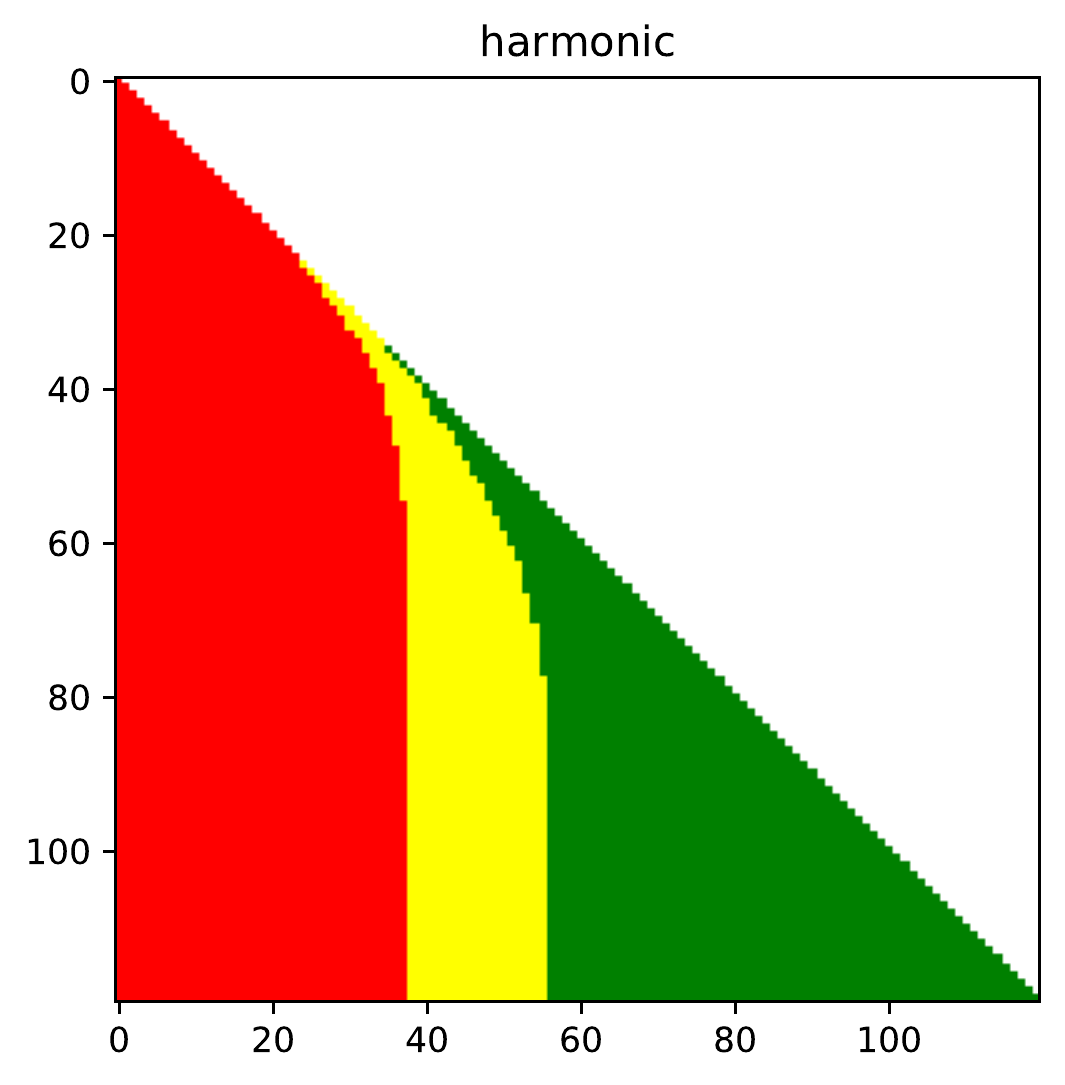}
    \includegraphics[width=0.32\textwidth]{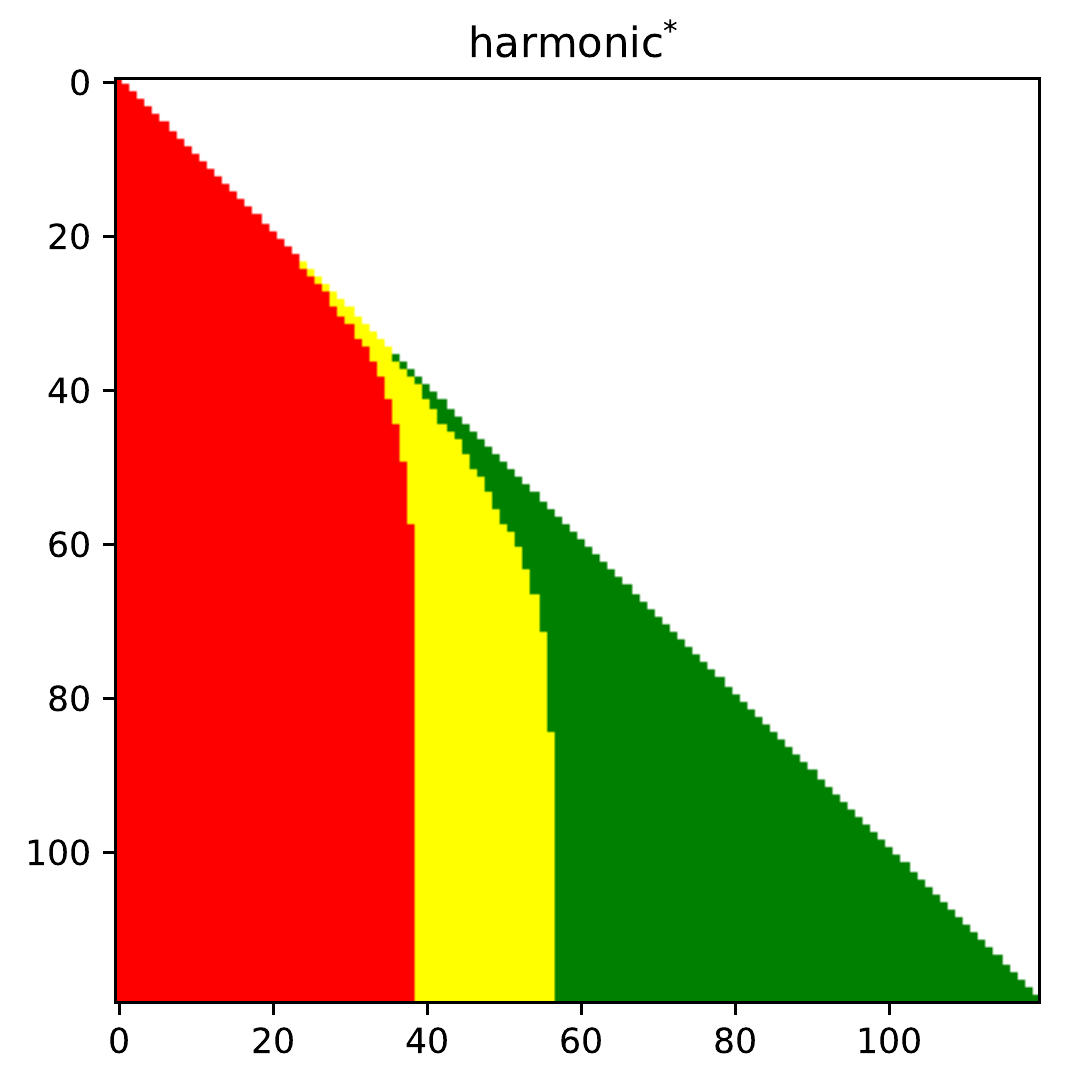}
    \includegraphics[width=0.32\textwidth]{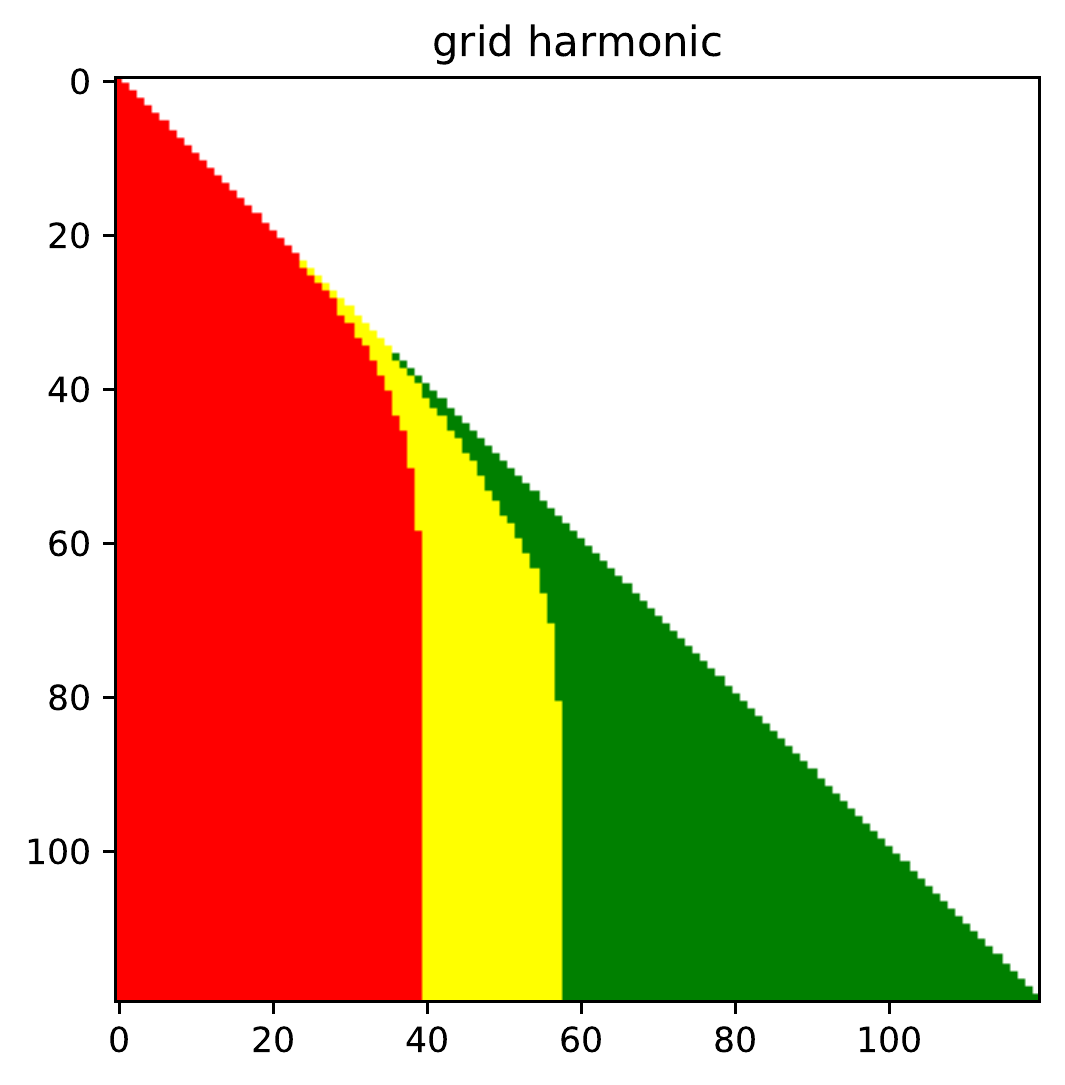}
  \end{center}
  \caption{An analogue of Figure \ref{fig:DM} with correlation $0$.}
  \label{fig:DM3}
\end{figure}

In this section we report some additional simulation results.
Figure \ref{fig:cdf2} is an analogue of Figure \ref{fig:cdf} with correlations $0.5$ and $0$ in place of $0.9$,
and Figures \ref{fig:DM2} and \ref{fig:DM3} are analogues of Figure \ref{fig:DM} with correlations $0.5$ and $0$,
respectively.
One interesting phenomenon is that the performance of the Bonferroni method improves as we approach independence.
The performance of the Bonferroni method also typically improves
when there are fewer observations from the alternative hypothesis:
see Figure~\ref{fig:cdf3}, where we have $0.1\%$ of observations from the alternative distribution in the left panel
(which coincides with Figure~\ref{fig:cdf})
and $1\%$ of observations from the alternative distribution in the right panel.

For other values of parameters (correlation, signal strength, signal sparsity, number of p-values) that we tried,
the relative performance of the four methods that are our main object of study
(Hommel, grid harmonic, harmonic, and harmonic${}^*$)
is qualitatively similar to the figures presented here and in Section \ref{sec:9}.

\begin{figure}
  \begin{center}
    \includegraphics[width=0.55\textwidth]{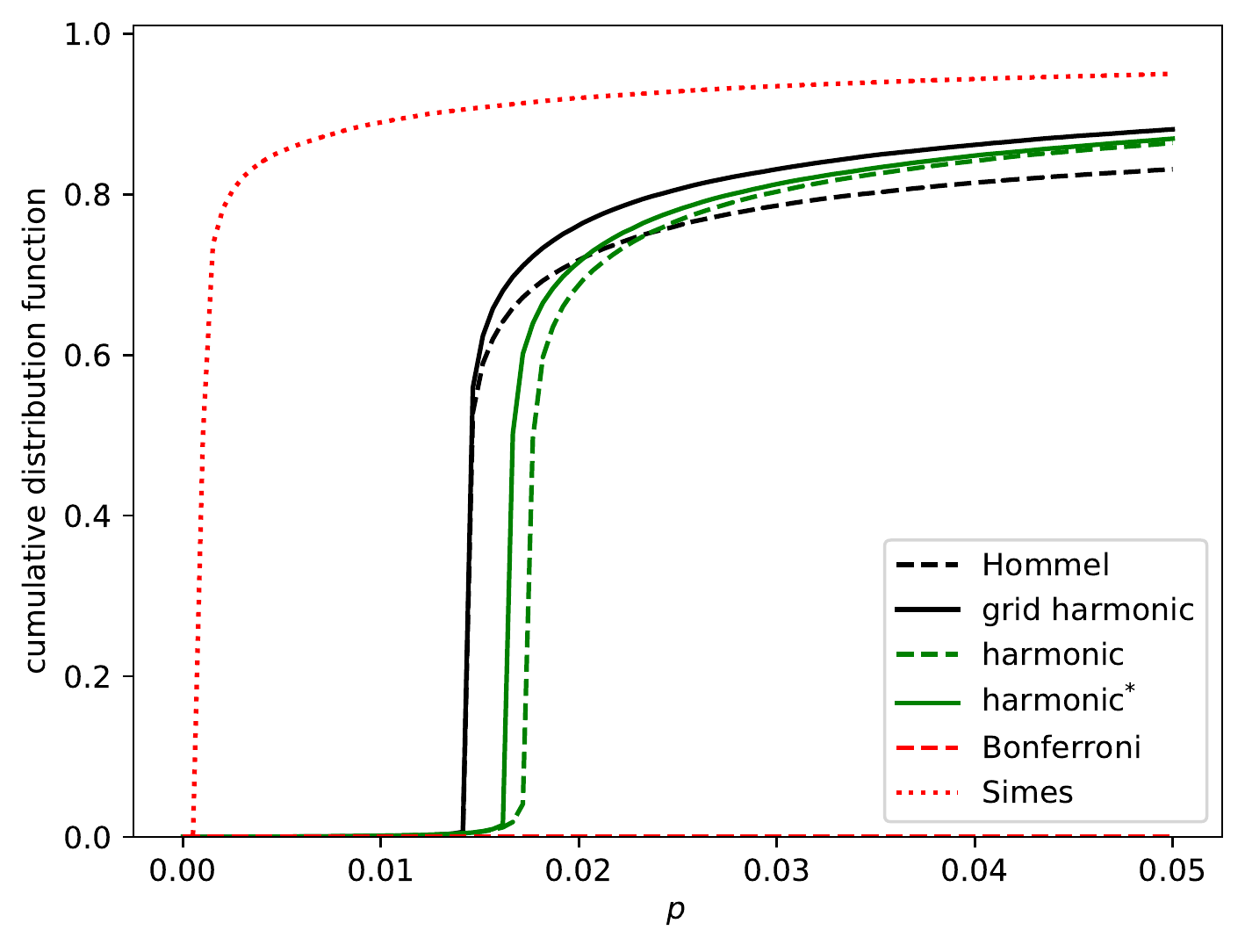}
  \end{center}
  \caption{An analogue of Figure~\ref{fig:cdf} for discrete p-values, as described in text.}
  \label{fig:cdf-discrete}
\end{figure}

Figures~\ref{fig:cdf-discrete} and~\ref{fig:epsilon} illustrate some specifics of merging discrete p-values.
Figure~\ref{fig:cdf-discrete} is produced in the same way as Figure~\ref{fig:cdf},
except that each input p-value $p$ is replaced by $\lceil D p\rceil/D$, where we take $D:=10^4$.
Now the Bonferroni function performs poorly; the corresponding curve is barely visible and coincides with the horizontal axis
(our definition \eqref{eq:Bonferroni} gives a combined p-value of 1).
We show only the most interesting part of the plot, for $p\in[0,0.05]$.
For small values of $p$ Hommel's p-merging function is now better than the harmonic and even harmonic$^*$.

\begin{figure}
  \begin{center}
    \includegraphics[width=0.6\textwidth]{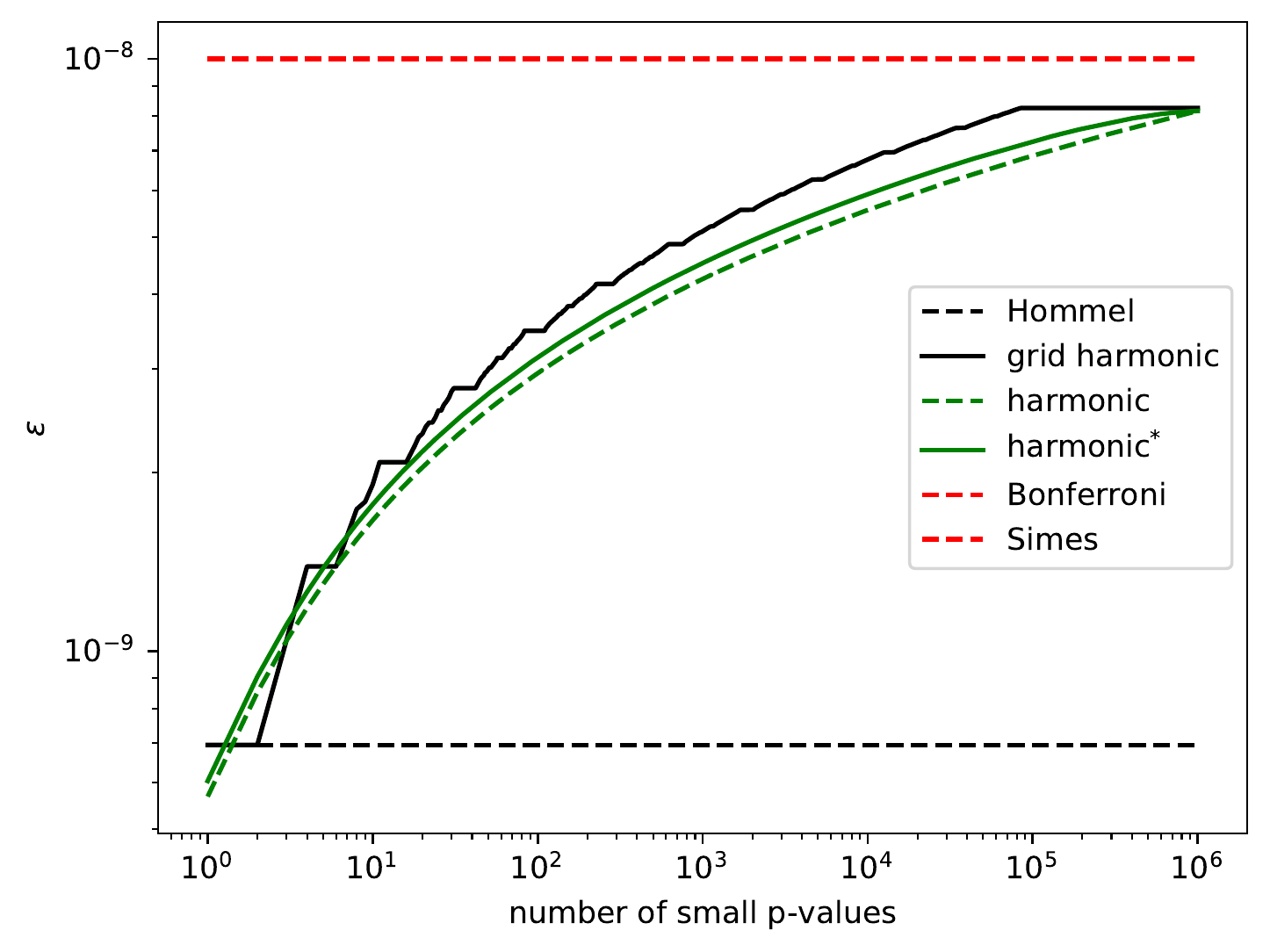}
  \end{center}
  \caption{The smallest p-value leading to the combined p-value of $1\%$,
    as described in text, for various merging methods.}
  \label{fig:epsilon}
\end{figure}

In Figure~\ref{fig:epsilon} we again consider a set of $K=10^6$ p-values generated by a test
(e.g., a rank test)
that produces p-values divisible by $\epsilon>0$.
A number $K_1\in\{1,\dots,K\}$ of these p-values are ``small''
(intuitively, correspond to a global null hypothesis being violated),
and the remaining $K_0:=K-K_1$ p-values are 1.
The small p-values are $\epsilon,2\epsilon,\dots,K_1\epsilon$.
The question that we ask in this toy scenario is:
how small should $\epsilon$ be in order for the combined p-value
to be highly statistically significant?

Figure~\ref{fig:epsilon} gives the borderline values of $\epsilon$
(leading to the combined p-value of $1\%$) as function of $K_1$ for six merging methods.
In this situation the Simes and Bonferroni methods produce the same borderline $\epsilon$ of $10^{-8}$ for all $K_1$.
These are the best results (in this context the higher the better),
while Hommel's method produces the worst result, $6.94\times10^{-10}$.
The graphs for the remaining merging methods are instructive in that,
whereas the grid harmonic method usually produces better results than harmonic and harmonic${}^*$,
the shape of its graph is much less regular.
While the discreteness of the grid harmonic calibrator \eqref{eq:Hkf}
is not noticeable in our previous figures,
in this combination with discrete p-values it becomes obvious.
In the middle of the plot, $K_1:=10^3$,
the borderline values of $\epsilon$ are $5.12\times10^{-9}$ for the grid harmonic method,
$4.25\times10^{-9}$ for harmonic, and $4.52\times10^{-9}$ for harmonic${}^*$.
\end{document}